\def\NAT@def@citea{\def\@citea{\NAT@separator}}
\DeclareFontFamily{OT2}{cmr}{\hyphenchar\font45 }
\DeclareFontShape{OT2}{cmr}{m}{n}{<->wncyr10}{}
\DeclareFontShape{OT2}{cmr}{m}{it}{<->wncyi10}{}
\DeclareFontShape{OT2}{cmr}{m}{sc}{<->wncysc10}{}
\DeclareFontShape{OT2}{cmr}{b}{n}{<->wncyb10}{}
\DeclareFontShape{OT2}{cmr}{bx}{n}{<->ssub*wncyr/b/n}{}
\DeclareFontFamily{OT2}{cmss}{\hyphenchar\font45 }
\DeclareFontShape{OT2}{cmss}{m}{n}{<->wncyss10}{}
\DeclareRobustCommand\cyr{\fontencoding{OT2}\selectfont}
\DeclareTextFontCommand{\textcyr}{\cyr}
\def\NAT@def@citea{\def\@citea{\NAT@separator}}
\theoremstyle{plain}
\newtheorem{theorem}{Theorem}[section]
\newtheorem{corr}[theorem]{Corollary}
\newtheorem{prop}[theorem]{Proposition}
\theoremstyle{definition}
\newtheorem{deff}[theorem]{Definition}
\theoremstyle{remark}
\newtheorem{comm}{Remark}
\newcommand{\R}{\mathbb R}
\newcommand{\C}{\mathbb C}
\DeclareMathOperator{\supp}{supp}
\newcommand{\p}{\partial}
\newcommand{\dbar}{\bar\partial}
\newcommand{\bc}{\mathbb{B}}
\newcommand{\holb}{Hol(D,\bc)}
\newcommand{\hpb}{H^p(D,\bc)}
\newcommand{\hw}{H_w(D, \bc)}
\newcommand{\hwp}{H^p_{w}(D,\bc)}
\newcommand{\sca}{\text{Sc}\,}
\newcommand{\vect}{\text{Vec}\,}
\begin{document}


\title{An Atomic Representation for Bicomplex Hardy Classes}

\author{
\name{William L. Blair\thanks{CONTACT William L. Blair. Email: wblair@uttyler.edu} }
\affil{Department of Mathematics, The University of Texas at Tyler, Tyler, TX 75799}
}

\maketitle

\begin{abstract}
We develop representations for bicomplex-valued functions in Hardy classes that generalize the complex holomorphic Hardy spaces. Using these representations, we show these functions have boundary values in the sense of distributions that are representable by an atomic decomposition, and we show continuity of the Hilbert transform on this class of distributional boundary values. 
\end{abstract}

\begin{keywords}
Hardy spaces; atomic decomposition; bicomplex numbers; boundary value in the sense of distributions; nonhomogeneous Cauchy-Riemann equation
\end{keywords}

\section{Introduction}

In this paper, we exploit relationships between complex-valued Hardy classes of functions defined on the complex unit disk and certain Hardy classes of bicomplex-valued functions that lead to useful representation formulas. 

In \cite{GHJH2}, G. Hoepfner and J. Hounie proved that holomorphic Hardy space functions have boundary values in the sense of distributions and those distributional boundary values are representable by an atomic decomposition. While this result is familiar to experts in harmonic analysis for certain classes of distributions on the boundary of the upper-half space, Hoepfner and Hounie provided a proof for the case of holomorphic functions on the disk, for which there was not an extent proof. Note, the conditions for a holomorphic function to have a boundary value in the sense of distributions were previously considered by E. Straube in \cite{Straube}, the atomic decomposition result for Hardy spaces of the disk was previously considered for the $p=1$ case by P. Koosis in \cite{Koosis}, and the atomic decomposition of disk Hardy spaces is described by R. Coifman in \cite{Coif2}. In \cite{WB}, the author showed that functions which solve nonhomogeneous Cauchy-Riemann equations and satisfy the same size condition as the holomorphic Hardy spaces are representable as the sum of a holomorphic Hardy space function and an error term constructed with the area integral from the classic Cauchy-Pompieu formula. Using this pointwise representation, the author showed these generalized Hardy space functions have boundary values in the sense of distributions and these distributional boundary values are representable as the sum of an atomic decomposition and a well-controlled error term. This extends the result from \cite{GHJH2} to classes of nonholomorphic functions. Also in \cite{WB}, the author extends continuity of the Hilbert transform to these classes of distributions by appealing to the associated result for holomorphic functions in \cite{GHJH2}. This provides another class of distributions associated with a small $p$-class of functions where the Hilbert transform is a continuous operator. Both the pointwise and boundary representations are shown to hold for functions in Hardy classes that are solutions to higher-order nonhomogeneous Cauchy-Riemann equations where the function and its derivatives have finite Hardy space norm. The boundary representation is used to show that the Hilbert transform is a continuous operator on the class of distributions associated with these Hardy classes also. Following the previously described success, we return to the question posed in the introduction of \cite{WB} which is: ``What classes of functions, other than the holomorphic Hardy space functions, have boundary values in the sense of distributions on the circle that have an atomic decomposition?'' 

We consider Hardy classes of functions defined on the complex unit disk that take values in the bicomplex numbers. The bicomplex numbers are a higher-dimensional extension of the complex numbers that, unlike quaternions, are commutative with respect to multiplication. The analysis of functions of bicomplex variables and bicomplex-valued functions is an active area of research as it provides a setting for a generalization of complex analysis different than the well-studied Clifford analysis and analysis of functions of several complex variables. For background on bicomplex numbers and bicomplex-valued functions as well as some recent examples of research in analysis that involve the bicomplex numbers, see \cite{BCTransmutation, BCBergman, FundBicomplex, BicomplexHilbert, KravAPFT, BCHolo,ComplexSchr}. The Hardy classes that we consider will be functions that satisfy a bicomplex generalization of the Cauchy-Riemann equation and satisfy a size condition analogous to that of the complex-valued Hardy space functions. This consideration will include both a bicomplex analogue of holomorphic functions as well as nonholomorphic functions. We prove that functions in these bicomplex Hardy classes are intimately connected with functions in the holomorphic Hardy spaces of complex-valued functions as well as the generalized Hardy classes of solutions to nonhomogeneous Cauchy-Riemann equations considered in \cite{WB} by showing every function in the bicomplex Hardy classes is representable as a linear combination of a holomorphic Hardy space function and a generalized Hardy class function. This is a nontrivial application of the generalized Hardy classes considered in \cite{WB}. Using this representation, we find these bicomplex-valued functions inherit the familiar boundary behavior of existence of an $L^p$ boundary value and convergence to these $L^p$ boundary values in the $L^p$ norm of the classic Hardy spaces. Also, we show that these functions have boundary values in the sense of distributions, and by appealing to a combination of the atomic decomposition results in \cite{GHJH2} and \cite{WB}, we show these distributional boundary values can be represented by a sum of atomic decompositions. This atomic decomposition representation directly leads to showing the Hilbert transform is a continuous operator on this class of distributions. As in \cite{WB}, all of these results are shown to extend to functions in Hardy classes of solutions to higher-order generalizations of the bicomplex Cauchy-Riemann equation.

We outline this paper. In Section \ref{background}, we provide background concerning the classic holomorphic Hardy spaces, the generalized Hardy spaces of complex solutions to nonhomogeneous Cauchy-Riemann equations, and bicomplex numbers. Previously unconsidered extensions of certain complex results will be included here as well as generalizations of certain tools to the bicomplex setting. In Sections \ref{DefsSect} and \ref{bcholohardysection}, we define the notion of holomorphicity that we consider, the corresponding bicomplex holomorphic Hardy spaces, and show these functions have representations in terms of functions in the complex holomorphic Hardy spaces. In Section \ref{generalizedbchardysection}, we define the bicomplex generalized Hardy spaces and show they are intimately connected to the previously studied complex generalized Hardy spaces. In Section \ref{atomicdecompsection}, the bicomplex Hardy classes are shown to have boundary values in the sense of distributions that are representable by a sum of atomic decompositions of boundary distributions of complex Hardy class functions. In Section \ref{higherordersection}, natural analogues of the work in the previous sections will be shown for Hardy classes of solutions to higher-order differential equations in the bicomplex setting.

\section{Background}\label{background}

We begin this section with a brief description of notation. We work exclusively on the domain $D$ which we take to be the disk of radius one centered at the origin in the complex plane. By $L^p(S)$, we denote the complex-valued functions defined on a set $S$ with integrable modulus raised to the $p^{\text{th}}$-power. We represent the space of distributions on the boundary of the unit disk $\p D$ by $\mathcal{D}'(\p D)$. We represent the space of $k$-times continuously differentiable functions on a set $S$ by $C^k(S)$. Finally, in an effort to disambiguate between other notions of conjugation that will arise below, we denote the complex conjugate of $z \in \C$ by $z^*$, i.e., if $z = x + iy$, where $x, y \in \R$, then $z^* = x - iy$. 

\subsection{Holomorphic Hardy Spaces}\label{holohardy}

We recall definitions and facts about the classic holomorphic Hardy spaces that we generalize. 

\begin{deff}
    We define $Hol(D)$ to be the space of functions $f: D \to \C$ such that 
    \[
        \frac{\p f}{\p z^*} = 0. 
    \]
\end{deff}

\begin{deff}
    For $0 < p < \infty$, we define $H^p(D)$ to be the space of functions $f \in Hol(D)$ such that 
    \[
        ||f||_{H^p} := \sup_{0 < r < 1} \left( \int_0^{2\pi} |f(re^{i\theta})|^p \,d\theta \right)^{1/p} < \infty.
    \]
\end{deff}

\begin{theorem}[\cite{Duren}]\label{bvcon}
A function $f \in H^p(D)$, $0 < p < \infty$, has nontangential boundary values $f_{nt} \in L^p(\partial D)$ at almost every point of $\p D$, 
\[
\lim_{r\nearrow 1} \int_0^{2\pi} |f(re^{i\theta})|^p \, d\theta = \int_0^{2\pi} |f_{nt}(e^{i\theta})|^p \,d\theta,
\]
and
\[
\lim_{r\nearrow 1} \int_0^{2\pi} |f(re^{i\theta})- f_{nt}(e^{i\theta})|^p \, d\theta = 0.
\]
\end{theorem}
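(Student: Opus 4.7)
The plan is to follow the classical Duren-type argument, with the proof splitting naturally by the range of $p$. The first step is to establish Hardy's convexity theorem: for any holomorphic $f$ on $D$ and any $0 < p < \infty$, the integral means $M_p(r,f) := \left(\int_0^{2\pi} |f(re^{i\theta})|^p\, d\theta\right)^{1/p}$ are nondecreasing in $r \in (0,1)$. This reduces to the subharmonicity of $|f|^p$; for $p \geq 1$ it is immediate from $|f|$ being subharmonic and $t \mapsto t^p$ convex increasing, and for $p < 1$ one argues locally on subdomains where $f$ is non-vanishing and patches. Consequently the supremum in the definition of $||f||_{H^p}$ is in fact the limit as $r \nearrow 1$, and the family $f_r(\theta) := f(re^{i\theta})$ is bounded in $L^p(\partial D)$.

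For $p \geq 1$, the existence of a boundary function is obtained by extracting a weakly convergent subsequence of $\{f_r\}$ in $L^p(\partial D)$: reflexivity handles $p > 1$, while for $p = 1$ one invokes the Riesz representation theorem to extract a limiting complex Borel measure $\mu$ and then uses the F.\ and M.\ Riesz theorem to conclude that $\mu$ is absolutely continuous with respect to arc-length. In either case one identifies $f$ as the Poisson integral of the resulting boundary function $f_{nt}$ by passing to the limit in the Poisson representation of $f$ on subdisks of radius $\rho < 1$. Fatou's theorem on nontangential limits of Poisson integrals then supplies the a.e.\ nontangential convergence to $f_{nt}$.

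For $0 < p < 1$, I would proceed via the Blaschke factorization $f = B \cdot g$, where $B$ is the Blaschke product formed from the zeros of $f$ (whose convergence follows from Jensen's inequality combined with the $H^p$ bound) and $g$ is zero-free on $D$. Since $g$ has no zeros, a holomorphic branch $h := g^{p/2}$ is well-defined, and a direct norm computation shows $h \in H^2(D)$. Apply the previous step to $h$ to obtain $h_{nt} \in L^2(\partial D)$ with a.e.\ nontangential convergence, and combine with the fact that $B$, being a bounded holomorphic function, has nontangential boundary values $B_{nt}$ a.e.\ with $|B_{nt}| = 1$. Setting $f_{nt} := B_{nt} \cdot h_{nt}^{2/p}$ then produces the desired element of $L^p(\partial D)$ and the a.e.\ nontangential convergence of $f_r$ to it.

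The two norm statements are then obtained together. Fatou's lemma and the monotonicity of $M_p(r,f)$ give $\int |f_{nt}|^p\, d\theta \leq \liminf_{r \nearrow 1} \int |f_r|^p\, d\theta \leq ||f||_{H^p}^p$, while the Poisson representation (for $p \geq 1$) or the Blaschke decomposition argument (for $p < 1$) supplies the reverse inequality, so that $||f_r||_{L^p} \to ||f_{nt}||_{L^p}$. Combined with the a.e.\ nontangential convergence, the standard fact that pointwise a.e.\ convergence together with convergence of $L^p$ norms forces $L^p$ convergence (a Scheff\'e-type argument, valid for all $0 < p < \infty$) yields the third assertion. The main obstacle is the subunit exponent case $0 < p < 1$, where the absence of a Banach space structure and of a duality-based weak compactness argument forces the Blaschke detour through $g^{p/2}$; the remaining pieces are standard once this factorization is in hand.
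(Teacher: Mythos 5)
The paper does not prove this theorem at all: it is quoted verbatim from Duren's \emph{Theory of $H^p$ Spaces} (the citation \cite{Duren} is the proof), so there is no internal argument to compare against. Your sketch is a faithful reproduction of the classical argument in that source --- Hardy's convexity via subharmonicity of $|f|^p$, weak compactness plus F.~and M.~Riesz and Fatou's theorem for $p\geq 1$, the Blaschke factorization $f=Bg$ with $h=g^{p/2}\in H^2$ for $0<p<1$, and the Riesz--Scheff\'e step converting a.e.\ convergence plus norm convergence into $L^p$ convergence --- and it is correct as outlined.
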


Since the Lebesgue spaces for $0 < p < 1$ are not as useful as their $p \geq 1$ counterparts, we consider a more general kind of boundary value for functions in $H^p(D)$. 

\begin{deff}\label{bvcircle}Let $f: D \to \C$. We say that $f$ has a boundary value in the sense of distributions, denoted by $f_b \in \mathcal{D}'(\p D)$, if, for every $\varphi \in C^\infty(\partial D)$, the limit
            \[
             \langle f_b, \varphi \rangle := \lim_{r \nearrow 1} \int_0^{2\pi} f(re^{i\theta}) \, \varphi(e^{i\theta}) \,d\theta
            \]
            exists.
            
\end{deff}

In \cite{GHJH2}, it is shown that functions in $H^p(D)$ have distributional boundary values. 

\begin{theorem}[Theorem 3.1 \cite{GHJH2}]\label{GHJH23point1}
For $f \in Hol(D)$, the following are equivalent:
\begin{enumerate}
    \item For every $\phi \in C^\infty(\p D)$, there exists the limit
    \[
             \langle f_b, \phi \rangle := \lim_{r \nearrow 1} \int_0^{2\pi} f(re^{i\theta}) \, \phi(e^{i\theta}) \,d\theta.
            \]

    \item There is a distribution $f_b \in \mathcal{D}'(\p D)$ such that $f$ is the Poisson integral of $f_b$
    \[
             f(re^{i\theta}) = \frac{1}{2\pi}\langle f_b, P_r(\theta - \cdot) \rangle,
            \]
    where 
    \[
        P_r(\theta) = \frac{1-r^2}{1-2r\cos(\theta) +r^2}
    \]
    is the Poisson kernel on $D$. 

    \item There are constants $C>0$, $\alpha \geq 0$, such that 
    \[
        |f(re^{i\theta})| \leq \frac{C}{(1-r)^\alpha},
    \]
    for $0 \leq r < 1$.
\end{enumerate}
\end{theorem}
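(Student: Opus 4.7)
The plan is to close the cycle $(2) \Rightarrow (1) \Rightarrow (3) \Rightarrow (2)$.

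The implication $(2) \Rightarrow (1)$ is essentially formal. Given $\varphi \in C^\infty(\partial D)$, Fubini (legitimate for fixed $r<1$ since $P_r$ is smooth) converts the Poisson representation in (2) into
\[
\int_0^{2\pi} f(re^{i\theta})\varphi(e^{i\theta})\,d\theta = \langle f_b,\,(P_r * \varphi)(\cdot)\rangle.
\]
Since $\widehat{P_r}(n) = r^{|n|}$, the convolution $P_r * \varphi$ converges to $\varphi$ in $C^\infty(\partial D)$ as $r \nearrow 1$, so continuity of the distribution $f_b$ yields the limit $\langle f_b,\varphi\rangle$ required by (1).

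The core of the argument is $(1) \Rightarrow (3)$. Define $T_r \in \mathcal{D}'(\partial D)$ by $\langle T_r,\varphi\rangle := \int_0^{2\pi} f(re^{i\theta})\varphi(e^{i\theta})\,d\theta$. Hypothesis (1) makes the scalar family $\{\langle T_r,\varphi\rangle\}_{0\leq r<1}$ convergent, hence bounded, for each test function $\varphi$. The Banach--Steinhaus theorem on the Fr\'echet space $C^\infty(\partial D)$ upgrades this to a uniform distributional bound
\[
|\langle T_r,\varphi\rangle| \leq C \sum_{j=0}^k \sup_{\theta} |\varphi^{(j)}(\theta)|, \quad 0 \leq r < 1,
\]
with $C>0$ and $k \in \mathbb{N}$ independent of $r$. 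To turn this into a pointwise bound, fix $r$ and $\theta_0$, set $\rho = (1+r)/2$, and use Cauchy's integral formula on $|z|=\rho$ to write $f(re^{i\theta_0}) = \langle T_\rho, K\rangle$ with
\[
K(\phi) = \frac{\rho e^{i\phi}}{2\pi\bigl(\rho e^{i\phi} - r e^{i\theta_0}\bigr)}.
\]
Since $|\rho e^{i\phi} - r e^{i\theta_0}| \geq (1-r)/2$, direct differentiation yields $|K^{(j)}(\phi)| \leq C_j (1-r)^{-(j+1)}$ for $j \leq k$, and the displayed estimate applied at parameter $\rho$ then gives $|f(re^{i\theta_0})| \leq C'(1-r)^{-(k+1)}$, which is (3).

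For $(3) \Rightarrow (2)$, expand $f(z) = \sum_{n \geq 0} a_n z^n$. Cauchy's estimate together with (3) supplies $|a_n| \leq C s^{-n}(1-s)^{-\alpha}$ for every $s \in (0,1)$, and optimizing at $s = n/(n+\alpha)$ produces $|a_n| \leq C''(1+n)^\alpha$. A sequence of polynomial growth is precisely the Fourier-coefficient sequence of a distribution on $\partial D$, so
\[
\langle f_b,\varphi\rangle := 2\pi \sum_{n \geq 0} a_n\, \widehat{\varphi}(-n)
\]
is a well-defined element of $\mathcal{D}'(\partial D)$ (test functions have rapidly decaying Fourier coefficients). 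A Fourier-side computation using $\widehat{P_r(\theta - \cdot)}(-n) = r^{|n|} e^{in\theta}$ then gives
\[
\frac{1}{2\pi}\langle f_b,\,P_r(\theta - \cdot)\rangle = \sum_{n \geq 0} a_n r^n e^{in\theta} = f(re^{i\theta}),
\]
establishing (2).

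The main obstacle is $(1) \Rightarrow (3)$: the step that promotes a qualitative distributional hypothesis into a quantitative pointwise growth estimate. The two critical ingredients are Banach--Steinhaus, which extracts a uniform finite distributional order for $\{T_r\}$, and the holomorphicity of $f$, which through Cauchy's integral formula converts a bound against smooth test functions into a pointwise bound on $f$ itself. Once these are in hand, the remaining two implications reduce to routine Fourier-series manipulations with the Poisson kernel.
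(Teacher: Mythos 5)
Your argument is correct: the cycle $(2)\Rightarrow(1)\Rightarrow(3)\Rightarrow(2)$ is closed, and each step (Banach--Steinhaus to extract a uniform finite order, Cauchy's formula on the circle of radius $(1+r)/2$ to convert the distributional bound into pointwise growth, and the Fourier-coefficient growth estimate to build $f_b$) is the standard and essentially the original route of Hoepfner--Hounie and Straube. Note, however, that the paper itself offers no proof of this statement: it is imported verbatim as Theorem 3.1 of \cite{GHJH2}, so there is no internal argument to compare against; your write-up supplies a correct self-contained proof of the cited result.
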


\begin{theorem}[Corollary 3.1 \cite{GHJH2}]\label{GHJH23point1corr}
The functions in $H^p(D)$, $0 < p < \infty$, satisfy (3) in Theorem \ref{GHJH23point1}
\end{theorem}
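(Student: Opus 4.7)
The plan is to exploit the fact that for any $f \in Hol(D)$ and $p > 0$, the function $|f|^p$ is subharmonic on $D$, and then combine the sub-mean-value inequality for subharmonic functions with the $H^p$-norm control of circular integrals. Concretely, I would fix $z = re^{i\theta} \in D$ and choose $\rho = (1-r)/2$ so that the closed disk $\overline{B(z,\rho)}$ lies inside $D$.

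First I would apply the sub-mean-value inequality to $|f|^p$ on $B(z,\rho)$:
\[
|f(re^{i\theta})|^p \leq \frac{1}{\pi \rho^2} \int_{B(z,\rho)} |f(w)|^p \, dA(w).
\]
Since $B(z,\rho) \subset \{w : |w| < r + \rho\} \subset D$, I would pass to polar coordinates and enlarge the region of integration, obtaining
\[
\int_{B(z,\rho)} |f(w)|^p \, dA(w) \leq \int_0^{r+\rho} s \int_0^{2\pi} |f(se^{i\phi})|^p \, d\phi \, ds \leq ||f||_{H^p}^p \int_0^1 s \, ds = \frac{||f||_{H^p}^p}{2},
\]
where the inner circular integrals are controlled by the very definition of the $H^p$-norm.

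Combining these two estimates yields
\[
|f(re^{i\theta})|^p \leq \frac{2 \, ||f||_{H^p}^p}{\pi (1-r)^2},
\]
so taking $p$-th roots gives item (3) of Theorem \ref{GHJH23point1} with $C = (2/\pi)^{1/p} \, ||f||_{H^p}$ and $\alpha = 2/p$. There is no substantive obstacle here; the only point requiring a moment of care is the choice of $\rho$, which must be small enough that $B(z,\rho) \subset D$ yet comparable to $1-r$ so that the factor $1/\rho^2$ produces a polynomial-in-$1/(1-r)$ bound rather than a worse singularity.
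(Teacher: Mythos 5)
Your argument is correct. The paper itself offers no proof of this statement---it is imported verbatim as Corollary 3.1 of \cite{GHJH2}---so there is no in-paper argument to compare against; what you have written is a complete, self-contained proof. The key facts you use (subharmonicity of $|f|^p$ for $f \in Hol(D)$ and all $p>0$, the solid sub-mean-value inequality, and the enlargement of $B(z,\rho)$ to the origin-centered disk of radius $r+\rho<1$ so that each circular slice is controlled by $\|f\|_{H^p}^p$) are all standard and correctly deployed, and the choice $\rho=(1-r)/2$ does exactly what you say. The only remark worth making is that your route yields the exponent $\alpha = 2/p$, whereas the sharp classical estimate (obtained, e.g., via the least harmonic majorant of $|f|^p$ and the bound $P_r \le (1+r)/(1-r)$, as in Duren) gives $|f(re^{i\theta})| \le 2^{1/p}\|f\|_{H^p}(1-r)^{-1/p}$. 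Since condition (3) of Theorem \ref{GHJH23point1} only asks for \emph{some} $\alpha \ge 0$, this loss is immaterial here, and your area-average argument has the advantage of avoiding harmonic majorants entirely.
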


Furthermore, not only do holomorphic Hardy spaces functions have boundary values in the sense of distributions, but those distributional boundary values are representable by an atomic decomposition. 

\begin{deff}
    For $0 < p \leq 1$, we say a measurable function $a(e^{i\theta})$ on $\p D$ is a $p$-atom when $a$ satisfies
    \begin{itemize}
    \item $\supp(a) \subset J$, where $J$ is an arc in $\p D$ (that could be all of $\p D$),
    \item $|a(\theta)|\leq |J|^{-1/p}$,
    \item $\int_0^{2\pi} a(\theta) \, \theta^k \,d\theta = 0,$ for $k \leq \frac{1}{p} -1$,
\end{itemize}
\end{deff}

\begin{theorem}[Theorem 2.2 \cite{GHJH2}]\label{GHJH2twopointtwo}
    For $0 < p \leq 1$ and $f \in H^p(D)$, $f$ has a distributional boundary value $f_b$ and there exist a sequence $\{a_j\}$ of $p$-atoms and a sequence $\{c_j\} \in \ell^p(\C)$ such that 
    \[
        f_b = \sum_{j} c_j a_j
    \]
    in the topology of $\mathcal{D}'(\p D)$. 
\end{theorem}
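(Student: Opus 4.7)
The plan is to combine the distributional boundary value result from Theorem \ref{GHJH23point1} with a maximal-function plus Whitney decomposition argument in the spirit of Coifman and Fefferman. First, Corollary \ref{GHJH23point1corr} together with Theorem \ref{GHJH23point1} immediately delivers the distributional boundary value $f_b \in \mathcal{D}'(\p D)$, so the remaining task is the atomic representation.

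To construct the atoms, I would work with the nontangential maximal function $f^*(\theta) = \sup_{(r,\vp) \in \Gamma_\alpha(\theta)} |f(re^{i\vp})|$ over a fixed Stolz cone $\Gamma_\alpha(\theta)$, and rely on the classical bound $\|f^*\|_{L^p(\p D)} \lesssim \|f\|_{H^p(D)}$. Setting $\Omega_k = \{\theta \in \p D : f^*(\theta) > 2^k\}$ for $k \in \Z$, each $\Omega_k$ is an open subset of $\p D$, and the layer-cake identity gives
\[
    \sum_{k \in \Z} 2^{kp} |\Omega_k| \approx \|f^*\|_{L^p}^p \lesssim \|f\|_{H^p}^p.
\]
Next I would Whitney-decompose each $\Omega_k$ into a family of maximal arcs $\{I_{k,j}\}_j$ with lengths comparable to their distance to $\Omega_k^c$, lift each arc to a tent $T_{k,j} \subset D$, and use a smooth partition of unity subordinate to the $T_{k,j}$ to produce localized pieces $g_{k,j}$ of $f$ whose boundary distributions are supported in a slight enlargement of $I_{k,j}$. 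Setting $\lambda_{k,j} = 2^k |I_{k,j}|^{1/p}$ and $a_{k,j} = \lambda_{k,j}^{-1} (g_{k,j})_b$ enforces the pointwise bound $|a_{k,j}| \leq |I_{k,j}|^{-1/p}$ on $I_{k,j}$; the cancellation $\int a_{k,j}(\theta)\,\theta^n\,d\theta = 0$ for integer $n \leq 1/p - 1$ is imposed by subtracting from each $g_{k,j}$ a suitable polynomial whose low-order moments on $I_{k,j}$ match those of $g_{k,j}$, with the removed mass absorbed at the next scale $k+1$ so that the telescoping identity is preserved.

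The main obstacle will be the final convergence: showing that $\sum_{k,j} \lambda_{k,j}\, a_{k,j} \to f_b$ in $\mathcal{D}'(\p D)$ while $\sum_{k,j} |\lambda_{k,j}|^p \lesssim \|f\|_{H^p}^p$. The $\ell^p$ bound reduces, via the disjointness estimate $\sum_j |I_{k,j}| \leq |\Omega_k|$, to the layer-cake estimate displayed above. The distributional convergence is the delicate piece, since one must control the redistribution errors introduced by the polynomial corrections on each scale; this is where the Whitney geometry and the vanishing-moment conditions must be carefully balanced so that, when paired with any fixed test function $\varphi \in C^\infty(\p D)$, the tails of the sum vanish as the scale index $k$ tends to $\pm\infty$.
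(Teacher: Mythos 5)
First, a point of reference: the paper does not prove this statement at all --- it is imported verbatim as Theorem 2.2 of \cite{GHJH2} and used as a black box, so there is no internal proof to compare against. Your proposal must therefore be judged on its own terms as an attempted proof of the cited result. The first step (existence of $f_b$ via Theorem \ref{GHJH23point1} and Corollary \ref{GHJH23point1corr}) is fine, and the overall strategy --- nontangential maximal function, level sets $\Omega_k$, Whitney arcs, and a Calder\'on--Zygmund-type decomposition --- is indeed the classical route to atomic decompositions (Coifman, Coifman--Weiss, Stein), and essentially the route taken in \cite{GHJH2}.

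However, the proposal has a genuine gap precisely at the step you yourself flag as ``the delicate piece,'' and that step is not a technicality but the entire content of the theorem. Two concrete problems. First, for $p<1$ the boundary value $f_b$ is only a distribution, not a locally integrable function, so the claim that setting $\lambda_{k,j}=2^k|I_{k,j}|^{1/p}$ ``enforces the pointwise bound $|a_{k,j}|\leq |I_{k,j}|^{-1/p}$'' does not typecheck: $(g_{k,j})_b$ has no pointwise values to bound. In the standard argument the atoms at level $k$ arise from the \emph{differences of consecutive good parts} $g_{k+1}-g_k$, and one proves separately (using that the grand maximal function is $\leq 2^k$ off $\Omega_k$, together with estimates on the bad parts) that these differences are genuinely bounded functions dominated by $C2^k$; this boundedness is a lemma requiring its own proof, not a consequence of the normalization. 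Second, the moment corrections cannot simply be ``absorbed at the next scale'' by fiat: one must verify that the correcting polynomials, multiplied by the Whitney partition of unity, have maximal functions controlled by $2^k$ on the relevant arcs and that the resulting telescoping series converges to $f_b$ in $\mathcal{D}'(\p D)$, with the tails at $k\to-\infty$ (where the good parts must converge to $f_b$) and at $k\to+\infty$ (where they must vanish) handled separately. As written, the proposal describes where the proof would live rather than carrying it out; to make it complete you would either need to supply these estimates in full or reduce to the real-variable atomic decomposition of $H^p(\p D)$ (Coifman--Weiss \cite{Coif2}) by first proving that the nontangential maximal function of the Poisson extension of $f_b$ lies in $L^p(\p D)$, which is in fact the shorter path.
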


In \cite{GHJH2}, Theorem \ref{GHJH2twopointtwo} was used to show that the Hilbert transform on the circle is a continuous operator on the collection of distributional boundary values of functions in $H^p(D)$, for $0 < p \leq 1$. We define the Hilbert transform below and recall a classic result attributed to Marcel Riesz, which can be found, for example, in \cite{Rep}. 

\begin{deff}
        For any $f \in L^1(\p D)$, the Hilbert transform $H(f)$ is given by 
        \[
            H(f)(e^{i\theta}) := \lim_{\epsilon \to 0}\frac{1}{\pi} \int_{\epsilon \leq |t| \leq \pi} \frac{f(e^{i(\theta-t)})}{2\tan(t/2)}\,dt.
        \]
\end{deff}

\begin{theorem}[M. Riesz]\label{hilbertpgreaterthanone}
Let $u \in L^p(\p D)$, $1 < p < \infty$. The Hilbert transform of $u$, denoted by $H(u)$, is in $L^p(\p D)$, and 
\[
        ||H(u)||_{L^p(\p D)}\leq C ||u||_{L^p(\p D)},
\]
where $C$ is a constant that depends only on $p$.
\end{theorem}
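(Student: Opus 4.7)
The plan is to anchor the proof at $p=2$ and extend in both directions, using interpolation to reach $1 < p \leq 2$ and duality to reach $2 \leq p < \infty$. Since the Hilbert transform on the circle is the Fourier multiplier that sends $e^{in\theta}$ to $-i\,\sgn(n)\,e^{in\theta}$, Parseval's identity gives the $p=2$ bound $\|H(u)\|_{L^2(\p D)} \leq \|u\|_{L^2(\p D)}$ with sharp constant $1$, which is the base case for everything that follows.

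To reach $1 < p \leq 2$, I would establish the weak-type $(1,1)$ estimate
\[
    |\{ e^{i\theta} \in \p D : |H(f)(e^{i\theta})| > \lambda \}| \les \frac{\|f\|_{L^1(\p D)}}{\lambda}
\]
via a Calder\'on--Zygmund decomposition of $|f|$ at height $\lambda$, writing $f = g + b$ with $\|g\|_{L^\infty} \les \lambda$ and $b = \sum_j b_j$, each $b_j$ supported on a disjoint arc $I_j$ with vanishing integral and with total length $\sum_j |I_j| \les \lambda^{-1} \|f\|_{L^1(\p D)}$. The good part is controlled by the $p=2$ bound applied to $g$, while for the bad part one exploits the cancellation of $b_j$ against H\"ormander-type smoothness of the kernel $(2\tan(t/2))^{-1}$ on the complement of the doubled arcs $2 I_j$. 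Marcinkiewicz interpolation between the weak $(1,1)$ and strong $(2,2)$ bounds then yields strong $(p,p)$ boundedness for $1 < p \leq 2$. The range $2 \leq p < \infty$ follows by duality: the Hilbert transform is skew-adjoint with respect to the $L^2$ pairing on $\p D$, so the $L^{p'}$ bound for $p' = p/(p-1) \in (1,2]$ transfers directly to $L^p$ with the same constant $C$.

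The main obstacle will be the weak-type $(1,1)$ estimate, specifically verifying the H\"ormander smoothness of $(2\tan(t/2))^{-1}$ together with the care needed at the antipodal singularity near $t=\pm\pi$, where the kernel fails to behave like a pure Calder\'on--Zygmund convolution kernel on $\R$. An alternative route, perhaps more in keeping with the holomorphic Hardy-space spirit of the present paper, is M. Riesz's original complex-analytic argument: for real $u \in L^p(\p D)$, take the Poisson extension $U$ on $D$ with harmonic conjugate $V$ normalized so that $V(0)=0$, and exploit the subharmonicity of $|U+iV|^p$ together with an elementary trigonometric inequality of the form $|y|^p \leq A_p |x+iy|^p - B_p x^p$ to obtain $\int_0^{2\pi} |V(re^{i\theta})|^p\,d\theta \les C_p \int_0^{2\pi} |U(re^{i\theta})|^p\,d\theta$ uniformly in $r$, before passing to the limit $r \nearrow 1$; this avoids Calder\'on--Zygmund theory entirely but requires separate inequalities in the ranges $1 < p \leq 2$ and $p \geq 2$.
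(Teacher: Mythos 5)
The paper does not prove this theorem at all: it is stated as a classical result attributed to M.~Riesz and simply cited from the literature (Mashreghi's \emph{Representation theorems in Hardy spaces}), so there is no in-paper argument to compare yours against. Your outline is a correct and standard plan for proving it from scratch. Both routes you describe are legitimate: the real-variable route ($L^2$ via the multiplier $-i\,\sgn(n)$, weak $(1,1)$ via Calder\'on--Zygmund decomposition, Marcinkiewicz interpolation for $1<p\leq 2$, and skew-adjointness $H^*=-H$ for the dual range) is the modern textbook proof, while the complex-analytic route via the Poisson extension, the harmonic conjugate, and an inequality of the shape $|y|^p \leq A_p|x+iy|^p - B_p x^p$ for $x\geq 0$ is essentially Riesz's original argument and fits the Hardy-space spirit of this paper better. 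One small correction: the ``antipodal singularity'' you flag is not actually there. The kernel $\bigl(2\tan(t/2)\bigr)^{-1}=\tfrac12\cot(t/2)$ \emph{vanishes} at $t=\pm\pi$ and is smooth on $[-\pi,\pi]\setminus\{0\}$; the only singularity is at $t=0$, where the kernel behaves like $1/t$, so the H\"ormander condition is verified exactly as for the Hilbert kernel on $\R$ and no extra care is needed near $t=\pm\pi$. If you pursue the complex-analytic route, the one point that needs explicit attention is the reduction to nonnegative $u$ (split $u$ into positive and negative parts, or real and imaginary parts first), since the elementary pointwise inequality is only valid when the real part $x=U$ is nonnegative, and the two ranges $1<p\leq 2$ and $p\geq 2$ do require separate inequalities or a duality step, as you note.
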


In \cite{GHJH2}, the following definitions are used to realize the boundary values in the sense of distributions of holomorphic Hardy space functions as the class of objects that extends the Hilbert transform to the case of small $p$. The result proved in \cite{GHJH2} follows the definitions below.

\begin{deff}\label{atomicnorm}
    For $0< p \leq 1$ and $f_b \in \mathcal{D}'(\p D)$ with atomic decomposition $f_b = \sum_{n=1}^\infty c_n a_n$, where $\{c_n\} \in \ell^p(\C)$ and $\{a_n\}$ are a sequence of $p$-atoms, the atomic norm is defined by
    \[
        ||f_b||_{at} := \left( \inf \sum_{k=1}^\infty |c_k|^p\right)^{1/p},
    \]
    where the infimum is taken over the collection of all coefficient sequences of equivalent atomic decompositions. 
\end{deff}

\begin{deff}
    For $0 < p \leq 1$, we define $(H^p(D))_b$ to be the collection of boundary values in the sense of distributions of functions in $H^p(D)$.
\end{deff}

\begin{theorem}[Theorem 6.2, Corollary 6.2 \cite{GHJH2}]\label{hilbertholohardy}
The Hilbert transform is a continuous operator on $(H^p(D))_b$ with the atomic norm. 
\end{theorem}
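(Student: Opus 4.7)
The plan is to reduce the statement to a uniform bound on the atomic norm of the Hilbert transform of an individual $p$-atom. Specifically, once a constant $K=K(p)$ is obtained such that every $p$-atom $a$ satisfies $\|H(a)\|_{at}\leq K$, I can take an arbitrary $f_b\in (H^p(D))_b$, write it by Theorem~\ref{GHJH2twopointtwo} as $f_b=\sum_j c_j a_j$ with $\{c_j\}\in\ell^p$, define $H(f_b):=\sum_j c_j H(a_j)$ (convergence in $\mathcal{D}'(\p D)$ is automatic, since the atomic norm dominates the distributional topology on $C^\infty(\p D)$ test functions), splice in an atomic decomposition of each $H(a_j)$, and thereby produce a combined atomic decomposition of $H(f_b)$ whose $\ell^p$ coefficient sum is bounded by $K\sum_j|c_j|^p$. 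Taking infimum over atomic decompositions of $f_b$ then yields the estimate $\|H(f_b)\|_{at}\leq K^{1/p}\|f_b\|_{at}$.

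The key lemma is therefore the atomic decomposition of $H(a)$ for a $p$-atom $a$ supported in an arc $J\subset \p D$ of length $\ell$ centered at $e^{i\theta_0}$. I would split $H(a)=H(a)\chi_{2J}+H(a)\chi_{\p D\setminus 2J}$. For the local piece, the bound $|a|\leq \ell^{-1/p}$ gives $\|a\|_{L^q}\leq \ell^{1/q-1/p}$ for every $q>1$, so by Theorem~\ref{hilbertpgreaterthanone} the same bound (up to a constant) holds for $H(a)\chi_{2J}$; invoking the standard equivalence between $L^\infty$-normalized and $L^q$-normalized atomic decompositions of $H^p(D)$, this realizes the local piece as a single $p$-atom times a coefficient of order unity. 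For the far piece, the vanishing moments of $a$ allow me to replace the Hilbert kernel $(2\tan((\theta-t)/2))^{-1}$ by the difference between it and its Taylor polynomial of order $N=\lfloor 1/p-1\rfloor$ about $t=\theta_0$, yielding the pointwise decay estimate $|H(a)(e^{i\theta})|\lesssim \ell^{1/p}/\mathrm{dist}(e^{i\theta},J)^{1+1/p}$ on $\p D\setminus 2J$. A dyadic decomposition of $\p D\setminus 2J$ into annular arcs $J_k=2^{k+1}J\setminus 2^k J$, together with the corresponding renormalization of each restriction (after polynomial corrections to restore moment cancellation), yields $p$-atoms $b_k$ with coefficients $\lambda_k$ of size $O(2^{-k\varepsilon})$ for some $\varepsilon>0$, whence $\sum_k|\lambda_k|^p\leq K$.

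The main obstacle will be the dyadic construction in the far region: the pieces $H(a)\chi_{J_k}$ do not automatically satisfy the moment conditions for $p$-atoms, so polynomial corrections must be added and the resulting discrepancies redistributed across neighboring dyadic levels in a way that preserves both the $L^\infty$ size bound and the $\ell^p$-summability of the coefficients. This bookkeeping, though standard in Hardy-space theory, is the technical heart of the argument. Once the atomic decomposition of $H(a)$ is established, assembling the rest as described in the first paragraph completes the proof that $H$ is a continuous operator on $(H^p(D))_b$ equipped with the atomic norm.
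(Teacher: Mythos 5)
This theorem is imported verbatim from \cite{GHJH2} (their Theorem 6.2 and Corollary 6.2); the present paper gives no proof of it, so there is no in-paper argument to compare yours against. Judged on its own terms, your proposal is the standard real-variable route: reduce to a uniform atomic-norm bound for $H(a)$ over individual $p$-atoms $a$, obtain that bound by splitting $H(a)$ into a local piece (controlled via Theorem \ref{hilbertpgreaterthanone} and the equivalence of $L^q$- and $L^\infty$-normalized atoms) and a far piece (controlled by the moment conditions and the smoothness of the kernel off the diagonal), and then sum. This is the classical ``operators map atoms to molecules'' argument and is a legitimate strategy. Note, however, that it makes no use of the fact that elements of $(H^p(D))_b$ are boundary values of \emph{holomorphic} functions. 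Holomorphy permits a much shorter proof: $f_b$ has Fourier spectrum in $\{n\geq 0\}$, the Hilbert transform acts on $e^{in\theta}$ as multiplication by $-i\,\sgn(n)$, hence $H(f_b)=-i\left(f_b-\hat{f}(0)\right)$, and continuity in the atomic norm follows from the triangle inequality once one controls $|\hat{f}(0)|$ by $\|f_b\|_{at}$ and admits the constant (exceptional) atom. That is closer in spirit to how the cited source exploits the structure of $(H^p(D))_b$.

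Two points in your write-up are genuine gaps rather than omitted routine details. First, the far-field dyadic decomposition: you correctly identify that the pieces $H(a)\chi_{J_k}$ fail the moment conditions and that polynomial corrections must be introduced and their discrepancies redistributed across dyadic levels, but you do not carry this out, and this redistribution is precisely where the uniform bound $\sum_k|\lambda_k|^p\leq K$ could fail if done carelessly; as written, the key lemma is asserted rather than proved. (Also, the stated decay rate $\ell^{1/p}/\mathrm{dist}(\cdot,J)^{1+1/p}$ is only correct when $1/p$ is an integer; in general the exponent is $N+2$ with $N=\lfloor 1/p-1\rfloor$, which still exceeds $1/p$ and hence still yields summability, but the bookkeeping should use the correct exponent.) Second, well-definedness and range: you define $H(f_b):=\sum_j c_j H(a_j)$ for one chosen atomic decomposition, so you must check independence of that choice (or define $H$ directly on distributions as a Fourier multiplier and verify agreement with the atomwise sum), and you should explain why $H(f_b)$ again lies in $(H^p(D))_b$ rather than merely in the atomic Hardy space, since the theorem asserts that $H$ is an operator \emph{on} $(H^p(D))_b$. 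Both points are immediate from the multiplier description but are not addressed by the argument as proposed.
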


Before we proceed to the necessary background material for the bicomplex numbers, we describe an integral operator and include associated results that we use in the complex setting. We define and use a generalization of this integral operator later in the bicomplex setting.

\begin{deff}\label{vekopondisk}
For $f: D \to \mathbb{C}$ and $z  \in \mathbb{C}$, we denote by $T(\cdot)$ the integral operator defined by 
\[
T(f)(z) = -\frac{1}{\pi} \iint_D \frac{f(\zeta)}{\zeta - z}\, d\xi\,d\eta,
\]
whenever the integral is defined.
\end{deff}

This $T(\cdot)$ comes directly from the classic Cauchy-Pompieu theorem. For $f \in L^1(D)$, 
\[
    \frac{\p}{\p z^*} T(f) = f,
\]
i.e., $T(\cdot)$ is a right-inverse operator to $\frac{\p}{\p z^*}$, see \cite{Vek, BegBook}. The next few results describe characteristics of $T(\cdot)$ and are used throughout this work. We include them here for completeness.

\begin{theorem}[Theorem 1.26 \cite{Vek} ] \label{Vekonepointtwentysix}
If $f \in L^q(D)$, $1 \leq q \leq 2$, then $T(f) \in L^\gamma(D)$, $1 < \gamma < \frac{2q}{2-q}$. 
\end{theorem}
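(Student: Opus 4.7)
The plan is to realize $T(f)$ as (a constant multiple of) the convolution $f \ast K$ and apply Young's convolution inequality together with a direct estimate of $\|K\|_{L^r}$ on a bounded ball. First I would extend $f$ by zero outside $D$, so that for $z \in D$ every $\zeta - z$ appearing in the defining integral lies in $\{|w| \le 2\}$; writing $K(w) := w^{-1}\chi_{\{|w|\le 2\}}(w)$, this gives $T(f)(z) = -\tfrac{1}{\pi}(f \ast K)(z)$. A polar-coordinate computation shows
\[
\|K\|_{L^r(\R^2)}^r \,=\, 2\pi\int_0^2 \rho^{\,1-r}\,d\rho,
\]
which is finite precisely for $1 \le r < 2$, establishing the sharp integrability of the truncated Cauchy kernel.

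Next I would invoke Young's convolution inequality with exponents $q$, $r$, $\gamma$ satisfying $1 + \gamma^{-1} = q^{-1} + r^{-1}$, to obtain
\[
\|T(f)\|_{L^\gamma(D)} \,\le\, \tfrac{1}{\pi}\,\|K\|_{L^r(\R^2)}\,\|f\|_{L^q(D)}.
\]
As $r$ varies in $[1,2)$, the resulting $\gamma$ sweeps out the interval $[q,\,2q/(2-q))$: at $r = 1$ one gets $\gamma = q$, and $\gamma \nearrow 2q/(2-q)$ as $r \nearrow 2$. The hypothesis $1 \le q \le 2$ guarantees $r < 2 \le q/(q-1)$ throughout the range, so Young's admissibility condition $q^{-1} + r^{-1} \ge 1$ is satisfied.

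Finally, to capture the remaining range $1 < \gamma < q$ asserted by the theorem, I would exploit the boundedness of $D$: Hölder's inequality yields $\|T(f)\|_{L^\gamma(D)} \le |D|^{1/\gamma - 1/q}\,\|T(f)\|_{L^q(D)}$ for $1 \le \gamma \le q$, and membership $T(f) \in L^q(D)$ has already been furnished by the Young step at $r = 1$. The only genuinely analytic ingredient is the kernel estimate; the rest is bookkeeping with the Young exponents. The main subtlety is the open upper endpoint, which reflects the failure of $K$ to belong to $L^2$ by exactly a logarithmic factor and so prevents $\gamma = 2q/(2-q)$ from being attained by this method.
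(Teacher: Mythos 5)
Your proposal is correct. Note first that the paper does not prove this statement at all: it is quoted verbatim as Theorem 1.26 of Vekua's book and used as a black box, so there is no internal proof to compare against. Your argument is a clean and complete substitute. The reduction to a convolution with the truncated Cauchy kernel $K(w)=w^{-1}\chi_{\{|w|\le 2\}}(w)$ after zero-extension of $f$ is legitimate (for $z,\zeta\in D$ one always has $|z-\zeta|<2$, so the cutoff loses nothing), the computation $\|K\|_{L^r}^r=2\pi\int_0^2\rho^{1-r}\,d\rho<\infty$ iff $r<2$ is right, and the exponent bookkeeping $1/\gamma=1/q+1/r-1$ does sweep out $[q,\,2q/(2-q))$ as $r$ runs over $[1,2)$, with the admissibility condition $1/q+1/r\ge 1$ holding because $q\le 2$ and $r<2$. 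The H\"older step on the bounded domain correctly fills in $1<\gamma\le q$. For what it is worth, Vekua's own proof proceeds by a three-exponent H\"older splitting of $|f(\zeta)|\,|\zeta-z|^{-1}$ followed by Fubini, which is in essence a hands-on proof of exactly the case of Young's inequality you invoke; so the two routes are morally identical, with yours packaging the kernel estimate more transparently. Your closing remark about the open upper endpoint is also accurate for this method (though for $1<q<2$ the endpoint $\gamma=2q/(2-q)$ can in fact be reached via Hardy--Littlewood--Sobolev; the theorem does not claim it, so nothing is missing).
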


\begin{theorem}[Theorem 1.4.7 \cite{KlimBook} ]\label{onefourseven}
If $f \in L^q(D)$, $1 < q \leq 2$, then $T(f)|_{\p D(0,r)} \in L^\gamma(\p D(0,r))$, $0 < r \leq 1$, where $\gamma$ satisfies $1 < \gamma < \frac{q}{2-q}$, and 
\[
||Tf||_{L^\gamma(\p D(0,r))} \leq C ||f||_{L^q(D)},
\]
where $C$ is a constant that does not depend on $r$ or $f$. 
\end{theorem}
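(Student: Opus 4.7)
The plan is to parametrize $\p D(0,r)$ by $z = re^{i\theta}$ and estimate the $L^\gamma$ norm by applying Minkowski's integral inequality followed by Hölder's inequality, reducing the bound to a uniform estimate on a kernel integral.

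Since $\gamma > 1$, Minkowski's integral inequality gives
\[
\|T(f)\|_{L^\gamma(\p D(0,r))} \leq \frac{1}{\pi} \iint_D |f(\zeta)| \left( \int_{\p D(0,r)} \frac{|dz|}{|\zeta - z|^\gamma} \right)^{1/\gamma} dA(\zeta),
\]
and Hölder's inequality with $1/q + 1/q' = 1$ then yields
\[
\|T(f)\|_{L^\gamma(\p D(0,r))} \leq \frac{\|f\|_{L^q(D)}}{\pi} \left( \iint_D \left( \int_{\p D(0,r)} \frac{|dz|}{|\zeta - z|^\gamma} \right)^{q'/\gamma} dA(\zeta) \right)^{1/q'}.
\]
The task therefore reduces to controlling the latter double integral uniformly in $r \in (0,1]$.

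To estimate the inner integral, I would parametrize $\zeta = \rho e^{i\phi}$ and $z = re^{i\theta}$, write the integral as $r\int_0^{2\pi}(r^2 + \rho^2 - 2r\rho\cos\psi)^{-\gamma/2}\,d\psi$ by rotational invariance, and apply the lower bound $1 - \cos\psi \geq 2\psi^2/\pi^2$ on $|\psi|\leq \pi$ together with a change of variables to obtain a pointwise estimate of the form $C_\gamma |r - \rho|^{1-\gamma}$ (modulo factors uniformly controlled on $D$). Passing to polar coordinates in the outer integral, uniform boundedness in $r$ reduces to verifying
\[
\int_0^1 |r - \rho|^{(1-\gamma)q'/\gamma} \rho\, d\rho \leq C
\]
uniformly in $r$, which amounts to controlling the behavior of the integrand near $\rho = r$.

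The main obstacle I anticipate is that this direct Minkowski/Hölder approach yields only the restricted range $\gamma < q$, whereas the theorem asserts the broader range $\gamma < q/(2-q)$. To recover the full range, a more refined analysis is required: either a kernel decomposition exploiting the oscillation of $1/(\zeta - z)$ rather than its modulus alone, or the indirect route of first invoking the interior integrability estimate in Theorem \ref{Vekonepointtwentysix} together with the fact that $T$ gains one derivative of regularity, sending $L^q(D)$ into $W^{1,q}(D)$ by Calder\'on--Zygmund boundedness of the Beurling transform, and then applying the Sobolev trace theorem $W^{1,q}(D) \to W^{1-1/q,q}(\p D)$ composed with the one-dimensional Sobolev embedding $W^{1-1/q,q}(\p D) \hookrightarrow L^\gamma(\p D)$, which is precisely valid for $\gamma < q/(2-q)$. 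Uniformity of the constant in $r$ would follow by applying the argument on each $D(0,r)$ and observing that the relevant trace operator norms are dominated by the $r = 1$ case.
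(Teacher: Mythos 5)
The paper does not prove this statement; it is imported verbatim as Theorem 1.4.7 of \cite{KlimBook} and used as a black box, so there is no in-paper proof to compare yours against, and I can only judge the proposal on its own terms. As a proof it is incomplete. The first route (Minkowski's integral inequality followed by H\"older applied to the modulus of the Cauchy kernel) is the one you actually develop, but, as you yourself note, it caps out at $\gamma < q$, which is strictly smaller than the asserted range $\gamma < \frac{q}{2-q}$ for every $q \in (1,2]$. The entire burden therefore falls on your second route, which is only gestured at.

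In that second route, the mapping property $T \colon L^q(D) \to W^{1,q}(D)$ (via the Beurling transform) and the exponent bookkeeping $W^{1,q}(D) \to W^{1-1/q,q}(\partial D) \hookrightarrow L^\gamma(\partial D)$, $\gamma < \frac{q}{2-q}$, are correct, but the step you dispose of in one clause---uniformity of the constant over $0 < r \le 1$---is exactly where the content of the theorem lies, and the assertion that ``the relevant trace operator norms are dominated by the $r=1$ case'' is false as stated. Rescaling the trace inequality from $D$ to $D(0,r)$ puts a factor $r^{1/\gamma - 2/q}$ in front of the zeroth-order term $\|u\|_{L^q(D(0,r))}$, and this exponent is negative throughout the admissible range of $\gamma$ and $q$, so the constants degenerate as $r \to 0$. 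This is not an artifact of the scaling argument: for $0 < \alpha < \frac{2-q}{q}$ the function $f(z) = -\frac{\alpha}{2}\, z\, |z|^{-\alpha - 2}$ lies in $L^q(D)$ and $T(f)(z) = |z|^{-\alpha} - 1 - h(z)$ with $h$ holomorphic on $D$, so $\int_0^{2\pi} |T(f)(re^{i\theta})|^\gamma \, d\theta \geq c\, r^{-\alpha\gamma}$ blows up as $r \to 0$, while the arc-length integral $\int_{\partial D(0,r)} |T(f)|^\gamma\,|dz| \sim r^{1 - \alpha\gamma}$ stays bounded precisely because $\gamma < \frac{q}{2-q} < \frac{1}{\alpha}$. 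Hence the uniform bound must be read with the arc-length normalization, it is saturated on small circles rather than on $\partial D$, and it cannot be extracted from $T(f) \in W^{1,q}(D)$ together with the standard trace theorem alone: you would need either to run the kernel estimate directly on each circle $\partial D(0,r)$, or to correct the trace inequality by subtracting averages (Poincar\'e on $D(0,r)$ for the oscillation and the planar embedding $W^{1,q}(D) \hookrightarrow L^m(D)$, $m < \frac{2q}{2-q}$, to control the averages). Neither is carried out, so the decisive step of the proposal is a gap.
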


\begin{theorem}[Theorem 1.4.8 \cite{KlimBook} ]\label{onefoureight}
If $f \in L^q(D)$, $1 < q \leq 2$, then 
\[
\lim_{r\nearrow 1} \int_0^{2\pi} |T(f)(e^{i\theta}) - T(f)(re^{i\theta})|^\gamma \,d\theta = 0, 
\]
for $1 < \gamma < \frac{q}{2-q}$. 
\end{theorem}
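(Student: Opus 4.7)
The plan is to prove this by a standard density argument: reduce to the case of smooth compactly supported functions, where the boundary behavior is transparent, and then use the uniform $L^\gamma$-bound from Theorem \ref{onefourseven} to control the error introduced by the approximation.

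First I would handle the case $g \in C^\infty_c(D)$. If $\supp(g) \subset K$ for some compact $K \subset D$, then for $z$ in a neighborhood $U$ of $\p D$ disjoint from $K$, the kernel $1/(\zeta - z)$ is smooth in $z$ with all derivatives bounded uniformly for $\zeta \in K$. Thus $T(g)$ extends as a $C^\infty$ function to $U$; in particular, $T(g)(re^{i\theta}) \to T(g)(e^{i\theta})$ uniformly on $\p D$ as $r \nearrow 1$, which gives convergence in the $L^\gamma(\p D)$ norm for such $g$.

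Next, given $f \in L^q(D)$, choose a sequence $g_n \in C^\infty_c(D)$ with $\|f - g_n\|_{L^q(D)} \to 0$ (possible since $C^\infty_c(D)$ is dense in $L^q(D)$). By linearity of $T$ and the triangle inequality in $L^\gamma$, for $r \in [1/2, 1)$,
\begin{align*}
&\left(\int_0^{2\pi} |T(f)(e^{i\theta}) - T(f)(re^{i\theta})|^\gamma \,d\theta\right)^{1/\gamma} \\
&\quad \leq \left(\int_0^{2\pi} |T(g_n)(e^{i\theta}) - T(g_n)(re^{i\theta})|^\gamma \,d\theta\right)^{1/\gamma} \\
&\quad\quad + \left(\int_0^{2\pi} |T(f-g_n)(e^{i\theta})|^\gamma \,d\theta\right)^{1/\gamma} + \left(\int_0^{2\pi} |T(f-g_n)(re^{i\theta})|^\gamma \,d\theta\right)^{1/\gamma}.
\end{align*}
Theorem \ref{onefourseven} applied at radius $1$ and at radius $r \geq 1/2$ bounds the last two terms by $C\|f - g_n\|_{L^q(D)}$, with $C$ independent of $r$ (the arc-length factor of $r$ is absorbed harmlessly for $r \geq 1/2$). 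Given $\epsilon > 0$, choose $n$ so these two terms are each at most $\epsilon/3$, then fix that $n$ and choose $r$ close enough to $1$ so that the first term is at most $\epsilon/3$ by the smooth case.

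The main obstacle is the minor technical point that $T$ maps $C^\infty_c(D)$ into functions smooth in a neighborhood of $\p D$, which is immediate once one notes that the singularity of $1/(\zeta - z)$ is avoided when $z$ lies off the support of the integrand. Everything else is a standard three-$\epsilon$ density argument resting on the uniform boundary bound established in the preceding theorem.
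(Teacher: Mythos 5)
The paper does not actually prove this statement; it is imported verbatim as Theorem 1.4.8 of \cite{KlimBook}, so there is no internal argument to compare yours against. On its own merits, your proof is correct and complete. The smooth case is right: for $g\in C^\infty_c(D)$ with $\supp(g)\subset K\subset\subset D$, the kernel $(\zeta-z)^{-1}$ is bounded with all $z$-derivatives bounded uniformly for $\zeta\in K$ and $z$ in an annulus $\{1-\delta\le |z|\le 1\}$ disjoint from $K$, so $T(g)$ is continuous (indeed holomorphic) there and uniformly continuous on that compact annulus, giving uniform, hence $L^\gamma$, convergence of the dilates. The three-$\epsilon$ step is also sound: $\gamma>1$ makes $L^\gamma$ a genuine norm so Minkowski applies, Theorem \ref{onefourseven} supplies the bound $\|T(h)\|_{L^\gamma(\partial D(0,r))}\le C\|h\|_{L^q(D)}$ uniformly in $0<r\le 1$ (including $r=1$, which is needed for the boundary term), and the discrepancy between arc-length measure on $\partial D(0,r)$ and $d\theta$ costs only a factor $r^{-1/\gamma}\le 2^{1/\gamma}$ for $r\ge 1/2$, as you note. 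The only thing worth making explicit is that Theorem \ref{onefourseven} at $r=1$ is what guarantees $T(f)|_{\partial D}\in L^\gamma(\partial D)$ in the first place, so that the quantity whose limit is being computed is finite; you use this implicitly and it is harmless. This is the standard density argument one would expect behind the cited result, and it buys a self-contained proof of the theorem from the uniform boundary bound alone.
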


\begin{theorem}[Theorem 1.19 \cite{Vek} ]\label{onepointnineteen}
For $f \in L^q(D)$, $q > 2$, $T(f) \in C^{0,\alpha}(\overline{D})$, with $\alpha = \frac{ q-2}{q}$. 
\end{theorem}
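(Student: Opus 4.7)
The plan is to prove this classical result of Vekua in two steps: first establish that $T(f)$ is uniformly bounded on $\overline{D}$, and then establish H\"older continuity with exponent $\alpha = (q-2)/q$. Both estimates reduce to applying H\"older's inequality to the defining integral and evaluating the resulting kernel integrals in polar coordinates. Throughout I would denote by $q' = q/(q-1)$ the conjugate exponent, and exploit the fact that the hypothesis $q > 2$ is exactly equivalent to $q' < 2$, which is precisely what makes $\zeta \mapsto |\zeta - z|^{-1}$ locally integrable to the $q'$-th power.

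For the boundedness step, fixing $z \in \overline{D}$ and applying H\"older's inequality gives
\[
    |T(f)(z)| \leq \frac{1}{\pi} \|f\|_{L^q(D)} \left( \iint_D \frac{d\xi \, d\eta}{|\zeta - z|^{q'}} \right)^{1/q'}.
\]
A switch to polar coordinates centered at $z$ shows that the inner integral is bounded uniformly in $z \in \overline{D}$, since $|\zeta - z| \leq 2$ on $D$ and $q' < 2$. This yields a uniform bound on $T(f)$ on the closed disk.

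For the H\"older estimate, I would fix $z_1, z_2 \in \overline{D}$, set $\delta = |z_1 - z_2|$, and split $D$ into a near piece $N = \{ \zeta \in D : |\zeta - z_1| < 2\delta \text{ or } |\zeta - z_2| < 2\delta \}$ and the far piece $F = D \setminus N$. On $N$ the triangle inequality gives $|T(f)(z_1) - T(f)(z_2)| \leq |T_N(f)(z_1)| + |T_N(f)(z_2)|$, where $T_N$ denotes $T$ with integration restricted to $N$; applying H\"older's inequality and polar coordinates on each term (using that the relevant disks have radius at most $3\delta$) yields a bound proportional to $\|f\|_{L^q(D)} \delta^{(2-q')/q'}$, and the exponent simplifies to $\alpha$. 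On $F$ I would use the identity
\[
    \frac{1}{\zeta - z_1} - \frac{1}{\zeta - z_2} = \frac{z_1 - z_2}{(\zeta - z_1)(\zeta - z_2)}
\]
together with the observation that $|\zeta - z_1|$ and $|\zeta - z_2|$ are comparable on $F$, which reduces the difference integrand to a constant multiple of $\delta \cdot |\zeta - z_1|^{-2}$. H\"older's inequality and a polar integral over $\{ |\zeta - z_1| > 2\delta \}$ then give a bound of $C \|f\|_{L^q(D)} \delta^{1 + (2 - 2q')/q'}$, and once again the exponent reduces to $\alpha$.

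The main technical obstacle is the careful bookkeeping of exponents in these polar integrations, especially on $F$ where one must confirm that the extra factor of $\delta$ coming from the difference identity combines with the decay integral to reproduce exactly $\alpha = (q-2)/q$. Continuity of $T(f)$ on all of $\overline{D}$ (not only on the open disk) follows from the boundedness estimate together with dominated convergence, so the resulting H\"older estimate is valid on the closed disk as stated.
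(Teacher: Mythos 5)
The paper does not prove this statement; it is quoted verbatim as Theorem~1.19 of Vekua's \emph{Generalized Analytic Functions} and used as a black box, so there is no in-paper argument to compare against. Your proposal is the standard (and essentially Vekua's own) proof, and it is correct: the hypothesis $q>2$ gives $q'<2$ so the kernel is uniformly $L^{q'}$-integrable, the near-region polar integral produces the exponent $(2-q')/q' = (q-2)/q$, and on the far region the identity $\frac{1}{\zeta-z_1}-\frac{1}{\zeta-z_2}=\frac{z_1-z_2}{(\zeta-z_1)(\zeta-z_2)}$ together with the comparability $|\zeta-z_2|\geq \tfrac12|\zeta-z_1|$ on $\{|\zeta-z_1|\geq 2\delta\}$ yields $\delta\cdot\delta^{(2-2q')/q'}=\delta^{(q-2)/q}$ as you claim (the lower-limit term dominates since $1-2q'<-1$). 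The only superfluous step is the final appeal to dominated convergence: the H\"older estimate is already valid for arbitrary $z_1,z_2\in\overline{D}$, so continuity up to the boundary is immediate.
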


\subsection{Bicomplex Numbers}

In this section (and those that follow), we use the notation and terminology from \cite{BCTransmutation, BCBergman} to describe the definitions and background concerning the bicomplex numbers that we use throughout. Additional references for background on the bicomplex numbers and functions taking bicomplex values include \cite{FundBicomplex, BicomplexHilbert, KravAPFT, BCHolo, ComplexSchr}, as found in \cite{BCTransmutation, BCBergman}.

The bicomplex numbers $\bc$, as a set, can be described as $\C^2$. Letting $\bc$ inherit the usual addition and multiplication by complex scalars, then with the multiplication defined by 
\[
    (z_1, z_2)(w_1,w_2) = (z_1w_1 - z_2w_2, z_1w_2 + z_2 w_1),
\]
for any $(z_1,z_2), (w_1,w_2) \in \C^2$, $\bc$ is a commutative algebra over $\C$. This multiplication is the same as identifying $(z_1, z_2) \in \C^2$ with 
\[
    z_1 + j z_2,
\]  
where $j^2 = -1$, and applying the usual rules for complex number multiplication. If we consider the elements 
\[
    p^{\pm} := \frac{1}{2}(1\pm ij),
\]
then we see that 
\[
p^+ p^+ = p^+,\quad p^- p^- = p^-, 
\]
and
\[
    p^+ p^- = 0.
\]
So, $p^\pm$ are idempotent, and $\bc$ has zero divisors.

\begin{deff}
    We define the scalar part of $z = z_1 + jz_2 \in \bc$ as 
    \[
        \sca z = z_1,
    \]
    and the vector part of $z = z_1 + jz_2 \in \bc$ as 
    \[
        \vect z = z_2.
    \]
\end{deff}

\begin{deff}
    We define the bicomplex conjugate of $z = z_1 + jz_2 \in \bc$ as 
    \[
        \overline{z} := z_1 - jz_2.
    \]
\end{deff}

\begin{comm}
    To differentiate between bicomplex conjugation and the usual complex conjugation, we will denote complex conjugation for $u \in \mathbb{C}$ by $u^*$, i.e., if $u = x + iy \in \C$, then $u^* = x - i y$. So, complex-valued holomorphic functions will be those $f: D \to \C$ such that 
    \[
        \frac{\p f}{\p z^*} = 0,
    \]
    complex-valued anti-holomorphic functions $f$ will satisfy 
    \[
        \frac{\p f}{\p z} = 0,
    \]
    and we will denote the respective spaces of these functions by $Hol(D)$ and $\overline{Hol(D)}$. 
\end{comm}

\begin{prop}[Proposition 1 \cite{BCTransmutation}]\label{everybchasplusandminus}
    Let $w \in \bc$. There exist unique $w^{\pm} \in \C$ such that 
    \[
        w = p^+ w^+ + p^- w^-.
    \]
    Furthermore, 
    \[
    w^{\pm} = \sca w \mp i \vect w.
    \]
\end{prop}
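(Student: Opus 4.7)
The plan is to treat the claim as a two-part statement (existence with an explicit formula, then uniqueness) and to reduce both parts to direct computations using the definition $p^{\pm} = \tfrac{1}{2}(1 \pm ij)$ and the relations $p^+ p^+ = p^+$, $p^- p^- = p^-$, $p^+ p^- = 0$ noted just before the proposition.

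For existence, I would start from the candidate formula $w^{\pm} := \sca w \mp i \vect w$ suggested by the statement itself and simply verify it. Writing $w = z_1 + jz_2$ with $z_1 = \sca w$, $z_2 = \vect w$, substituting $p^{\pm} = \tfrac{1}{2}(1\pm ij)$ into $p^+ w^+ + p^- w^-$ and expanding using the commutative multiplication on $\bc$ gives
\[
p^+ w^+ + p^- w^- = \tfrac{1}{2}(w^+ + w^-) + j\,\tfrac{i}{2}(w^+ - w^-).
\]
Plugging in $w^{\pm} = z_1 \mp i z_2$ collapses the scalar part to $z_1$ and the vector part to $z_2$, producing $w$. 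This is a mechanical calculation I would carry out once and record as the existence half.

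For uniqueness, suppose $w = p^+ u^+ + p^- u^- = p^+ v^+ + p^- v^-$ for some $u^{\pm}, v^{\pm} \in \C$. Setting $c^{\pm} := u^{\pm} - v^{\pm}$, the problem reduces to showing that
\[
p^+ c^+ + p^- c^- = 0 \quad \Longrightarrow \quad c^+ = c^- = 0.
\]
There are two equally short routes. One is the expansion above: the equation $\tfrac{1}{2}(c^+ + c^-) + j\,\tfrac{i}{2}(c^+ - c^-) = 0$ forces both the scalar and vector parts to vanish, yielding $c^+ + c^- = 0$ and $c^+ - c^- = 0$. The other is the idempotent trick: multiplying by $p^+$ and using $p^+p^+ = p^+$, $p^+p^- = 0$ gives $p^+ c^+ = 0$, and since $p^+ c^+$ has scalar part $c^+/2$, we conclude $c^+ = 0$; similarly for $c^-$ upon multiplying by $p^-$. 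I prefer the second route because it generalizes cleanly if one later wants to reuse the argument for bicomplex-valued functions.

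There is essentially no obstacle here — the content is purely algebraic and the idempotent identities already recorded before the proposition do all the work. The only point that warrants care is handling the commutativity of $\bc$ properly when pulling complex scalars past $j$, and ensuring in the uniqueness step that an equation of the form $p^+ c = 0$ with $c \in \C$ is correctly seen to force $c = 0$ (which follows because $p^+$ is not a zero divisor against complex scalars, as its scalar part is $1/2 \neq 0$). With these two observations in hand, the proof is complete in a few lines.
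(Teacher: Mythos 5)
Your proof is correct. Note that the paper itself does not prove this proposition — it is imported verbatim as Proposition 1 of the cited reference \cite{BCTransmutation} — so there is no in-paper argument to compare against; your direct verification (substituting $w^{\pm} = \sca w \mp i\,\vect w$ into $p^+w^+ + p^-w^-$ and collapsing the scalar and vector parts, then getting uniqueness either from the same expansion or from multiplying by the idempotents and using $p^+p^-=0$) is exactly the standard computation, and both of your uniqueness routes are valid.
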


The next few definitions bring into the bicomplex setting familiar objects of analysis.

\begin{deff}
    For $w \in \bc$, we define the bicomplex norm of $w$, denoted by $||\cdot||_{\bc}$,  as
    \[
        ||w||_{\bc} := \sqrt{\frac{|w^+|^2 + |w^-|^2}{2}},
    \]
    where $|w^{\pm}|$ is the usual complex modulus. 
\end{deff}

\begin{comm}\label{basicestimate}
    Note that by the definition of $||\cdot||_{\bc}$ we have, for $w = p^+ w^+ + p^- w^- \in \bc$, 
    \[
        \frac{1}{\sqrt{2}} |w^\pm| \leq ||w||_{\bc} \leq \frac{1}{\sqrt{2}}\left( |w^+| + |w^-|\right), 
    \]
    and for $w,v \in \bc$, we have
    \[
        ||wv||_\bc \leq \sqrt{2} \, ||w||_\bc \, ||v||_\bc.
    \]
\end{comm}

\begin{deff}
    For $0 < p < \infty$, we define the bicomplex Lebesgue space $L^p(D, \bc)$ to be the set of functions $f: D \to \bc$ such that 
    \[  
        ||f||_{L^p(D,\bc)}:= \left( \iint_D ||f(z)||^p_{\bc} \,dx\,dy \right)^{1/p} < \infty.
    \]
    Similarly, we define $L^p(\p D, \bc)$ to be the set of functions $g: \p D \to \C$ such that 
    \[
        ||g||_{L^p(\p D,\bc)}:= \left( \int_{\p D} ||g(z)||^p_{\bc} \,d\sigma(z) \right)^{1/p} < \infty,
    \]
    where $\sigma$ is Lebesgue surface measure on $\p D$. 
\end{deff}

The next proposition associates a function's inclusion in the bicomplex Lebesgue spaces with the inclusion of the function's complex components from its idempotent representation in the associated $\mathbb{C}$-valued $L^p$ spaces. 

\begin{prop}\label{propLqiff}
    For $p$ a positive real number, a function $f = p^+ f^+ + p^- f^- \in L^p(D, \bc)$ if and only if $f^+, f^- \in L^p(D)$. The same result holds for $f \in L^p(\p D, \bc)$.
\end{prop}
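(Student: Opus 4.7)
The plan is to reduce everything to the pointwise two-sided inequality recorded in Remark \ref{basicestimate}, which for any $w = p^+ w^+ + p^- w^- \in \bc$ gives
\[
\tfrac{1}{\sqrt{2}}\,|w^\pm| \;\leq\; \|w\|_{\bc} \;\leq\; \tfrac{1}{\sqrt{2}}\bigl(|w^+| + |w^-|\bigr).
\]
Applied pointwise to $f(z) = p^+ f^+(z) + p^- f^-(z)$, this already contains both directions of the equivalence up to raising to the $p^{\text{th}}$ power and integrating; the only wrinkle is that $t \mapsto t^p$ is not linear, so I will need cheap elementary inequalities to move between $\|f(z)\|_\bc^p$ and $|f^\pm(z)|^p$.

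For the forward implication, I would raise the lower bound to the $p^{\text{th}}$ power to obtain the pointwise estimate $|f^\pm(z)|^p \leq (\sqrt{2})^p \|f(z)\|_\bc^p$, then integrate over $D$ to conclude $\|f^\pm\|_{L^p(D)}^p \leq 2^{p/2}\|f\|_{L^p(D,\bc)}^p < \infty$. This immediately gives $f^+, f^- \in L^p(D)$.

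For the reverse implication, I would raise the upper bound to the $p^{\text{th}}$ power and then absorb the sum inside the exponent using the elementary estimate $(|a|+|b|)^p \leq 2^p\bigl(|a|^p + |b|^p\bigr)$, which is valid for every $p > 0$ (this replaces the need to split into the cases $p \geq 1$, where one would use convexity, and $0 < p < 1$, where one would use subadditivity). This produces $\|f(z)\|_\bc^p \leq C_p\bigl(|f^+(z)|^p + |f^-(z)|^p\bigr)$ with $C_p$ depending only on $p$, and integration over $D$ then yields $\|f\|_{L^p(D,\bc)}^p \leq C_p\bigl(\|f^+\|_{L^p(D)}^p + \|f^-\|_{L^p(D)}^p\bigr) < \infty$.

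The boundary case is handled by the same argument verbatim, simply replacing Lebesgue measure $dx\,dy$ on $D$ by Lebesgue surface measure $d\sigma$ on $\p D$; the pointwise inequality from Remark \ref{basicestimate} is of course independent of the measure. There is no real obstacle here, since the proof is entirely pointwise; the only minor item to keep track of is the uniform treatment of the exponent $p$ across the full range $p > 0$, which the inequality $(|a|+|b|)^p \leq 2^p(|a|^p+|b|^p)$ handles cleanly.
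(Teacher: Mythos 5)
Your proposal is correct and follows essentially the same route as the paper: both directions reduce to the pointwise two-sided estimate of Remark \ref{basicestimate}, with the forward implication obtained by integrating $|f^\pm|^p \leq 2^{p/2}\|f\|_\bc^p$ and the reverse by combining the upper bound with the elementary inequality $(a+b)^p \leq C_p(a^p+b^p)$, exactly as in the paper's argument (your explicit constant $2^p$ valid for all $p>0$ is a minor, correct refinement of the paper's unspecified $C_p$). The boundary case is handled identically in both.
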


\begin{proof}
    If $f \in L^p(D, \bc)$, then 
    \begin{align*}
        \iint_D |f^\pm(z)|^p \,dx\,dy \leq 2^{p/2} \iint_D ||f(z)||_{\bc}^p \,dx\,dy < \infty. 
    \end{align*}
    If $f^\pm \in L^p(D)$, then 
    \begin{align*}
        \iint_D ||f(z)||_{\bc}^p \,dx\,dy &\leq 2^{-p/2} \iint_D (|f^+(z)|+|f^-(z)|)^p\,dx\,dy \\
        &\leq C_p \left( \iint_D |f^+(z)|^p \,dx\,dy + \iint_D |f^-(z)|^p \,dx\,dy\right) < \infty,
    \end{align*}
    where $C_p$ is a constant that depends on only $p$. The proof for $f \in L^p(\p D,\bc)$ is the same. 

\end{proof}

Next, we define the differential operators that we consider. 

\begin{deff}
    We define the bicomplex differential operators $\p$ and $\dbar$ as 
    \[
        \p := \frac{1}{2} \left( \frac{\p}{\p x} - j \frac{\p }{\p y}\right)
    \]
    and
    \[
        \dbar := \frac{1}{2} \left( \frac{\p}{\p x} + j \frac{\p }{\p y}\right).
    \]
\end{deff}

\begin{comm}\label{minusfunctionisholo}
    Note that the definition of $\p$ and $\dbar$ above is equivalent to 
    \[
        \p:= p^+ \frac{\p}{\p z^*} + p^- \frac{\p }{\p z}
    \]
    and 
    \[
        \dbar := p^+ \frac{\p}{\p z} + p^- \frac{\p }{\p z^*},
    \]
    where $\frac{\p}{\p z}$ and $\frac{\p }{\p z^*}$ are the usual complex partial differential operators. Also, by considering the representation $w = p^+ w^+ + p^- w^-$, we observe that $\dbar \, w = 0$ if and only if 
    \[
        \frac{\p w^+}{\p z} = 0 = \frac{\p w^-}{\p z^*}, 
    \]
    i.e., $(w^+)^*,w^- \in Hol(D)$. 
\end{comm}

\begin{deff}
    We define the bicomplexification $\widehat{u}$ of a complex number $u = x + iy$ by 
    \[
        \widehat{u} = x + jy. 
    \]
\end{deff}

For any complex variable $u$, we have the following:
\begin{align*}
    \dbar \,\widehat{u} = 0,
\end{align*}
\begin{align*}
    \p \, \widehat{u}^n = n \widehat{u}^{n-1},
\end{align*}
\[
    \p \, \widehat{u^*} = 0,
\]
and 
\begin{align*}
    \dbar \,\widehat{u^*}^{n} = n \widehat{u^*}^{n-1}.
\end{align*}
So, with respect to $\dbar$, polynomials in the bicomplexification of the complex conjugate of a complex variable play the same role as polynomials in the complex conjugation of a complex variable with respect to $\frac{\p}{\p z^*}$. 

Next, we introduce a right-inverse operator to $\dbar$ in the complex setting. This operator was introduced in \cite{FundBicomplex} and generalizes the classic Cauchy-Pompieu operator from \cite{Vek} in the bicomplex setting. 

\begin{theorem}[\cite{Vek, FundBicomplex, BCTransmutation, BCBergman, conjbel}]\label{Thm: bctoperator}
    For $f \in L^p(D, \bc)$, $p \geq 1$, the bicomplex Theodorescu operator $T_{\bc}(\cdot)$ defined by 
    \[
        T_{\bc} f(z) := p^+\left(-\frac{1}{\pi} \iint_D \frac{f^+(\zeta)}{\zeta^* - z^*} \,d\eta\,d\xi \right) + p^-\left(- \frac{1}{\pi}\iint_{D} \frac{ f^-(\zeta)}{\zeta- z}\,d\eta\,d\xi \right) 
    \]
    exists and 
    \[
        \dbar \, T_{\bc} (f) = f. 
    \]
\end{theorem}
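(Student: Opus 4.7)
The plan is to reduce $T_{\bc}$ to the classical complex Cauchy-Pompieu operator $T$ of Definition \ref{vekopondisk} summand-by-summand via the idempotent decomposition, and then read off $\dbar T_{\bc} f = f$ from the known identity $\frac{\p}{\p z^*} T = \mathrm{id}$. By Propositions \ref{everybchasplusandminus} and \ref{propLqiff}, I write $f = p^+ f^+ + p^- f^-$ with $f^\pm \in L^p(D)$. The $p^-$ summand in the defining formula for $T_{\bc} f$ is exactly $p^- T(f^-)$, whose existence for $L^p$ data with $p \geq 1$ is classical since the kernel $(\zeta - z)^{-1}$ is locally integrable on $D$ (cf.\ Theorem \ref{Vekonepointtwentysix}). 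For the $p^+$ summand, I would use the identity $\zeta^* - z^* = (\zeta - z)^*$ to rewrite
\[
-\frac{1}{\pi}\iint_D \frac{f^+(\zeta)}{\zeta^* - z^*}\,d\xi\,d\eta = \bigl(T((f^+)^*)(z)\bigr)^*,
\]
exhibiting the $p^+$ integral as the complex conjugate of a classical Cauchy-Pompieu integral and thereby securing its existence.

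For the identity $\dbar T_{\bc} f = f$, I would invoke the alternative form $\dbar = p^+ \frac{\p}{\p z} + p^- \frac{\p}{\p z^*}$ from Remark \ref{minusfunctionisholo}. The key algebraic point is that the idempotent relations $(p^{\pm})^2 = p^{\pm}$ and $p^+ p^- = 0$ force $\dbar$ to act on the idempotent decomposition summand-by-summand, so
\[
\dbar T_{\bc} f = p^+ \frac{\p}{\p z}\bigl(T((f^+)^*)^*\bigr) + p^- \frac{\p}{\p z^*} T(f^-).
\]
The second summand reduces to $p^- f^-$ by the classical identity. For the first, I would conjugate that same identity, using $\frac{\p}{\p z}(g^*) = \bigl(\frac{\p g}{\p z^*}\bigr)^*$, applied to $g = T((f^+)^*)$; this gives $\frac{\p}{\p z} T((f^+)^*)^* = ((f^+)^{**})^* = f^+$, so the first summand equals $p^+ f^+$ and the two combine to $f$.

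The main obstacle, such as it is, is bookkeeping: keeping the order of the complex-conjugation operations straight in the $p^+$ summand, and noting that the derivative identities for $T$ are to be read distributionally when $f^\pm$ are merely in $L^p$. The latter is standard for the Cauchy-Pompieu operator, so once the algebra on the idempotents is set up carefully the identity follows without further analytic input.
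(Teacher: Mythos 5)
Your proposal is correct. Note that the paper does not prove Theorem \ref{Thm: bctoperator} at all---it is quoted from the cited literature---but your reduction $T_{\bc}f = p^+\bigl(T((f^+)^*)\bigr)^* + p^-T(f^-)$ is precisely the identity the paper itself establishes and exploits at the start of its proof of Theorem \ref{bctbehavior}, and your derivation of $\dbar\,T_{\bc}(f)=f$ from the idempotent relations together with $\frac{\p}{\p z}(g^*) = \bigl(\frac{\p g}{\p z^*}\bigr)^*$ and the classical identity $\frac{\p}{\p z^*}T = \mathrm{id}$ is sound, with the distributional reading of the derivatives correctly flagged.
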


The next theorem generalizes Theorems \ref{Vekonepointtwentysix}, \ref{onefourseven}, \ref{onefoureight}, and \ref{onepointnineteen} from the complex setting to the setting of bicomplex-valued functions. 

\begin{theorem}\label{bctbehavior}
For every $f \in L^q(D, \bc)$, $1 \leq q \leq 2$, $T_\bc(f) \in L^\gamma(D,\bc)$, $1 < \gamma < \frac{2q}{2-q}$, $T_\bc(f)|_{\p D(0,r)} \in L^\gamma(\p D(0,r), \bc)$, $0 < r \leq 1$, where $\gamma$ satisfies $1 < \gamma < \frac{q}{2-q}$,  
\[
||T_\bc(f)||_{L^\gamma(\p D(0,r),\bc)} \leq C ||f||_{L^q(D,\bc)},
\]
where $C$ is a constant that does not depend on $r$ or $f$, and
\[
\lim_{r\nearrow 1} \int_0^{2\pi} ||T_\bc(f)(e^{i\theta}) - T_\bc(f)(re^{i\theta})||_\bc^\gamma \,d\theta = 0, 
\]
for $1 \leq \gamma < \frac{q}{2-q}$. For $f \in L^q(D,\bc)$, $q > 2$, $T(f) \in C^{0,\alpha}(\overline{D},\bc)$, with $\alpha = \frac{ q-2}{q}$. 
\end{theorem}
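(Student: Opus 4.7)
The plan is to reduce each of the four conclusions to the already-established complex analogues (Theorems \ref{Vekonepointtwentysix}--\ref{onepointnineteen}) via the idempotent decomposition. By Proposition \ref{propLqiff}, $f \in L^q(D,\bc)$ if and only if both complex components $f^+, f^- \in L^q(D)$, with comparable norms. Writing $T_\bc f(z) = p^+ g^+(z) + p^- g^-(z)$, the ``$-$'' piece is exactly $g^-(z) = T(f^-)(z)$, the classical Cauchy--Pompieu operator. For the ``$+$'' piece, complex-conjugating inside the integral yields
\[
\bigl(g^+(z)\bigr)^* \;=\; -\frac{1}{\pi}\iint_D \frac{(f^+(\zeta))^*}{\zeta - z}\,d\eta\,d\xi \;=\; T\bigl((f^+)^*\bigr)(z),
\]
so $|g^+(z)| = |T((f^+)^*)(z)|$ pointwise on $D$, and likewise on $\p D(0,r)$. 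Since $\|(f^+)^*\|_{L^q} = \|f^+\|_{L^q}$, the classical results apply verbatim to both $g^+$ and $g^-$.

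From here the four assertions follow by combining the complex results with the two-sided estimate
\[
\tfrac{1}{\sqrt{2}}|w^\pm| \;\leq\; \|w\|_\bc \;\leq\; \tfrac{1}{\sqrt{2}}\bigl(|w^+| + |w^-|\bigr)
\]
of Remark \ref{basicestimate}, raised to the $\gamma$-th power (with a constant depending only on $\gamma$) to split the integrand. Concretely, for the first statement, $\|T_\bc f\|^\gamma_{L^\gamma(D,\bc)}$ is controlled above by a constant times $\|g^+\|^\gamma_{L^\gamma(D)} + \|g^-\|^\gamma_{L^\gamma(D)}$, each finite by Theorem \ref{Vekonepointtwentysix}. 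The boundary $L^\gamma$ statement and the estimate with a constant independent of $r$ follow the same way from Theorem \ref{onefourseven}. The $L^\gamma$ convergence as $r \nearrow 1$ follows from Theorem \ref{onefoureight} applied to each of $g^+$ and $g^-$ (for the endpoint $\gamma=1$, an application of H\"older's inequality on the fixed-measure space $[0,2\pi]$ reduces to some $\gamma' \in (1, q/(2-q))$). Finally, when $q > 2$, Theorem \ref{onepointnineteen} gives $g^+, g^- \in C^{0,\alpha}(\overline D)$ with $\alpha = (q-2)/q$, and the upper bound in Remark \ref{basicestimate} promotes this to H\"older continuity of $T_\bc f$ in the $\|\cdot\|_\bc$-sense.

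The only subtle step is the conjugation identity establishing $|g^+(z)| = |T((f^+)^*)(z)|$, since the ``$+$'' kernel in $T_\bc$ is $1/(\zeta^*-z^*)$ rather than $1/(\zeta-z)$. Once this identity is in hand, no genuinely new analysis is required: the argument becomes a bookkeeping exercise involving the idempotent decomposition and the norm equivalence in Remark \ref{basicestimate}, with all quantitative content borrowed from the classical complex statements.
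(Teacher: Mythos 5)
Your proposal is correct and follows essentially the same route as the paper's own proof: the paper likewise writes $T_\bc f = p^+\bigl(T((f^+)^*)\bigr)^* + p^- T(f^-)$, invokes Proposition \ref{propLqiff} and the norm equivalence of Remark \ref{basicestimate}, and then cites Theorems \ref{Vekonepointtwentysix}, \ref{onefourseven}, \ref{onefoureight}, and \ref{onepointnineteen} componentwise. Your explicit handling of the endpoint $\gamma = 1$ in the convergence statement via H\"older on the finite measure space is a small point the paper's proof glosses over (it only treats $1 < \gamma$ there), but otherwise the two arguments coincide.
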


\begin{proof}

    Let $f \in L^q(D,\bc)$, $q\geq 1$. By the definitions of the integral operators $T(\cdot)$ and $T_\bc(\cdot)$, observe that
    \begin{align*}
    	 T_{\bc} f(z) :&= p^+\left(-\frac{1}{\pi} \iint_D \frac{f^+(\zeta)}{\zeta^* - z^*} \,d\eta\,d\xi \right) + p^-\left(- \frac{1}{\pi}\iint_{D} \frac{ f^-(\zeta)}{\zeta- z}\,d\eta\,d\xi \right) \\
	 &= p^+  ( T((f^+)^*))^*+ p^- T(f^-)(z).
    \end{align*}
    Now, since $f \in L^q(D,\bc)$ implies $f^\pm \in L^q(D)$ (and $(f^\pm)^* \in L^q(D)$), by Proposition \ref{propLqiff}, and $|(T((f^+)^*))^* |= |T((f^+)^*)|$, it follows, by Remark \ref{basicestimate} and Theorem \ref{Vekonepointtwentysix}, that we have
    \begin{align*}
    	\iint_{D} ||T_\bc(f)(z)||_{\bc}^\gamma\,dx\,dy
	&\leq C_\gamma\left( \iint_D |T((f^+)^*)(z) |^\gamma \,dx\,dy+ \iint_D |T(f^-)(z)|^\gamma \,dx\,dy\right) < \infty,
    \end{align*}
    for any $\gamma$ satisfying $1 < \gamma< \frac{2q}{2-q}$, where $C_\gamma$ is a constant that only depends on only $\gamma$. Hence, $f \in L^\gamma(D,\bc)$, for $1 < \gamma< \frac{2q}{2-q}$.  
    
    Appealing to Theorem \ref{onefourseven}, instead of Theorem \ref{Vekonepointtwentysix},  we have, for $0 < r \leq 1$, 
    \begin{align*}
    	\int_{0}^{2\pi} ||T_\bc(f)(re^{i\theta})||_{\bc}^\gamma\,d\theta
	&\leq C_\gamma\left( \int_{0}^{2\pi} |T((f^+)^*)(re^{i\theta}) |^\gamma \,d\theta+ \int_{0}^{2\pi} |T(f^-)(re^{i\theta})|^\gamma \,d\theta\right) \\
	&\leq \tilde{C_\gamma}\left( ||f^+ ||_{L^q(D)}^\gamma + ||f^- ||_{L^q(D)}^\gamma \right) < \infty,
    \end{align*}
    for $0 < \gamma< \frac{q}{2-q}$, where $C_\gamma$ and $\tilde{C_\gamma}$ are constants that do not depend on $r$ or $f$.   Thus, $T_\bc(f)|_{\p D(0,r)} \in L^\gamma(\p D(0,r), \bc)$, for each $0 < r\leq 1$ and $\gamma$ satisfying $1 < \gamma < \frac{q}{2-q}$. By Remark \ref{basicestimate} again, we have
    \begin{align*}
     |f^\pm(z)| \leq \sqrt{2}\, ||f(z)||_{\bc},
    \end{align*}
  so
  \[
  	||f^\pm ||_{L^q(D)}^\gamma \leq 2^{\gamma/2} ||f ||_{L^q(D,\bc)}^\gamma.
  \]
  Therefore, 
    \begin{align*}
    		||T_{\bc}(f)||_{L^\gamma(\p D(0,r),\bc)} 
		&\leq  C \left( ||f^+ ||_{L^q(D)}^\gamma + ||f^- ||_{L^q(D)}^\gamma \right)^{1/\gamma} \\
		&\leq \tilde{C} ||f||_{L^q(D,\bc)},
    \end{align*}
   where $C$ and $\tilde{C}$ are constants that do not depend on $r$ or $f$. 
   
   For any $0 < r < 1$ and $1 < \gamma < \frac{q}{2-q}$, observe that 
   \begin{align*}
   	&\int_0^{2\pi} ||T_\bc(f)(e^{i\theta}) - T_\bc(f)(re^{i\theta})||_{\bc}^\gamma \,d\theta \\
	&\leq M_\gamma \left( \int_0^{2\pi}  |(T((f^+)^*))^*(e^{i\theta}) - (T((f^+)^*))^*(re^{i\theta})|^\gamma  \,d\theta+ \int_0^{2\pi} |T(f^-)(e^{i\theta}) - T(f^-)(re^{i\theta}) |^\gamma \,d\theta\right)\\
	&= M_\gamma \left( \int_0^{2\pi}  |T((f^+)^*)(e^{i\theta}) - T((f^+)^*)(re^{i\theta})|^\gamma \, d\theta + \int_0^{2\pi} |T(f^-)(e^{i\theta}) - T(f^-)(re^{i\theta}) |^\gamma \,d\theta \right),
   \end{align*}
   where $M_\gamma$ is a constant that depends on $\gamma$. By Theorem \ref{onefoureight}, we have
   \begin{align*}
   	&\lim_{r\nearrow 1} \int_0^{2\pi} ||T_\bc(f)(e^{i\theta}) - T_\bc(f)(re^{i\theta})||_{\bc}^\gamma \,d\theta\\
	&\leq \lim_{r\nearrow 1} M_\gamma \left( \int_0^{2\pi}  |T((f^+)^*)(e^{i\theta}) - T((f^+)^*)(re^{i\theta})|^\gamma \, d\theta + \int_0^{2\pi} |T(f^-)(e^{i\theta}) - T(f^-)(re^{i\theta}) |^\gamma \,d\theta \right) \\
	&= M_\gamma \left( \lim_{r\nearrow 1} \int_0^{2\pi}  |T((f^+)^*)(e^{i\theta}) - T((f^+)^*)(re^{i\theta})|^\gamma \, d\theta + \lim_{r\nearrow 1} \int_0^{2\pi} |T(f^-)(e^{i\theta}) - T(f^-)(re^{i\theta}) |^\gamma \,d\theta \right) \\
	&=0.
   \end{align*}
   Thus, 
   \[
   	\lim_{r\nearrow 1} \int_0^{2\pi} ||T_\bc(f)(e^{i\theta}) - T_\bc(f)(re^{i\theta})||_{\bc}^\gamma \,d\theta = 0.
   \]

   Finally, by Theorem \ref{onepointnineteen}, $T((f^+)^*)$ and $T(f^-)$ are in $C^{0,\alpha}(\overline{D})$, with $\alpha = \frac{q-2}{q}$, when $q>2$. Since $T((f^+)^*),T(f^-)\in C^{0,\alpha}(\overline{D})$, there exist $C_+$ and $C_-$ such that 
   \[
   	|  T((f^+)^*)(z) - T((f^+)^*)(w)| \leq C_+ |z- w|^\alpha
   \]
   and 
   \[
   		|T(f^-)(z) - T(f^-)(w) | \leq C_-|z-w|^\alpha,
   \]
 for all $z , w \in \overline{D}$. Since $T_\bc(f) = p^+  ( T((f^+)^*))^*+ p^- T(f^-)$, it follows that, for any $z,w \in \overline{D}$, we have
   \begin{align*}
        & ||T_\bc(f)(z)- T_\bc(f)(w)||_\bc \\
   	&\leq 2^{-1/2}\left( | ( T((f^+)^*))^*(z) - ( T((f^+)^*))^*(w)| + |T(f^-)(z) - T(f^-)(w) | \right)\\
	&= 2^{-1/2}\left( |  T((f^+)^*)(z) - T((f^+)^*)(w)| + |T(f^-)(z) - T(f^-)(w) | \right)\\
	&\leq 2^{-1/2}\left(C_+ |z- w|^\alpha + C_-|z-w|^\alpha \right) = C|z-w|^\alpha,
   \end{align*}
      where $C = \max\{2^{-1/2}C_+, 2^{-1/2}C_-\}$, and we have $T_\bc(f) \in C^{0,\alpha}(\overline{D},\bc)$, with $\alpha = \frac{q-2}{q}$.

\end{proof}

We also consider distributional boundary values in the context of bicomplex-valued functions. 

\begin{deff}\label{bcbvcircle}Let $f: D \to \bc$. We say that $f$ has a boundary value in the sense of distributions, denoted by $f_b \in \mathcal{D}'(\p D)$, if, for every $\varphi \in C^\infty(\partial D)$, the limit
            \[
             \langle f_b, \varphi \rangle := \lim_{r \nearrow 1} \int_0^{2\pi} f(re^{i\theta}) \, \varphi(e^{i\theta}) \,d\theta
            \]
            exists.
            
\end{deff}

With this definition, we extend Theorem 3.6 from \cite{WB3} to the bicomplex setting. This is a useful result concerning integrable functions and distributional boundary values. 

\begin{theorem}\label{lonedistbv}
For every $f \in L^1(D, \bc)$ such that $f$ has a $L^1(\p D,\bc)$ boundary value and 
\[
    \lim_{r \nearrow 1} \int_0^{2\pi} ||f(re^{i\theta}) - f(e^{i\theta})||_\bc \,d\theta = 0,
\]
$f_b$ exists and $f_b = f|_{\p D}$ as distributions. 
\end{theorem}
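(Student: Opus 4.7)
The plan is to fix an arbitrary test function $\varphi \in C^\infty(\p D)$ and show, directly from Definition \ref{bcbvcircle}, that the limit
\[
\lim_{r \nearrow 1} \int_0^{2\pi} f(re^{i\theta}) \, \varphi(e^{i\theta}) \, d\theta
\]
exists and coincides with $\int_0^{2\pi} f(e^{i\theta}) \, \varphi(e^{i\theta}) \, d\theta$. Since $\varphi$ is arbitrary, this simultaneously establishes that $f_b$ exists as a distribution and that $f_b = f|_{\p D}$. The strategy mirrors the complex-valued proof of Theorem 3.6 in \cite{WB3}; the only new ingredient is controlling the bicomplex norm of a pointwise product.

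The key step is a pointwise estimate using Remark \ref{basicestimate}. Since $\varphi$ is complex-valued and smooth on the compact set $\p D$, it is bounded, and viewed as a bicomplex number via the natural inclusion $\C \hookrightarrow \bc$ it satisfies $\|\varphi\|_\bc = |\varphi|$. Hence, for each $r \in (0,1)$ and each $\theta$,
\[
\big\|(f(re^{i\theta}) - f(e^{i\theta})) \, \varphi(e^{i\theta})\big\|_\bc \leq \sqrt{2} \, \|f(re^{i\theta}) - f(e^{i\theta})\|_\bc \, |\varphi(e^{i\theta})|.
\]

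Integrating both sides and applying the triangle inequality for $\bc$-valued integrals yields
\[
\Big\| \int_0^{2\pi} f(re^{i\theta})\varphi(e^{i\theta}) \, d\theta - \int_0^{2\pi} f(e^{i\theta})\varphi(e^{i\theta}) \, d\theta \Big\|_\bc \leq \sqrt{2} \, \|\varphi\|_{L^\infty(\p D)} \int_0^{2\pi} \|f(re^{i\theta}) - f(e^{i\theta})\|_\bc \, d\theta,
\]
and the hypothesis forces the right-hand side to vanish as $r \nearrow 1$. I do not anticipate any substantive obstacle; the main place where care is needed is the application of Remark \ref{basicestimate}, where the $\sqrt{2}$ factor arising from the sub-multiplicativity of $\|\cdot\|_\bc$ must be tracked so that the test function can indeed be pulled out of the bicomplex norm as a scalar multiplier.
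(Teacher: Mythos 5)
Your proof is correct, but it takes a genuinely different route from the paper. The paper's proof works componentwise: it writes $f = p^+ f^+ + p^- f^-$, uses Proposition \ref{propLqiff} and the estimate $|f^\pm| \leq \sqrt{2}\,\|f\|_\bc$ to verify that each of $f^+, f^-$ satisfies the hypotheses of the complex-valued Theorem 3.6 of \cite{WB3}, invokes that theorem to get $f^\pm_b = f^\pm|_{\p D}$, and then reassembles via linearity. You instead run the standard ``$L^1$ convergence to the boundary implies weak convergence against bounded test functions'' argument directly in the bicomplex norm, using the sub-multiplicativity estimate of Remark \ref{basicestimate} together with the fact that $\|u\|_\bc = |u|$ for $u \in \C \hookrightarrow \bc$ and the triangle inequality $\|\int g\|_\bc \leq \int \|g\|_\bc$ (valid since $\|\cdot\|_\bc$ is a genuine norm on the finite-dimensional space $\bc \cong \C^2$). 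Your argument is more self-contained — it does not need the idempotent decomposition or the cited complex result at all — and it makes the mechanism transparent; the paper's version buys consistency with the componentwise style used throughout and lets the cited theorem absorb the analytic work. One minor point worth making explicit in your write-up: for the integrals $\int_0^{2\pi} f(re^{i\theta})\varphi(e^{i\theta})\,d\theta$ to be defined you need $\theta \mapsto f(re^{i\theta})$ integrable for $r$ near $1$, which follows from the displayed hypothesis together with $f|_{\p D} \in L^1(\p D,\bc)$; this is implicit in the paper's treatment as well.
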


\begin{proof}
By Proposition \ref{everybchasplusandminus}, $f = p^+ f^+ + p^- f^-$, and by Proposition \ref{propLqiff}, $f^+, f^- \in L^1(D)$. Since $f$ has an $L^1(\p D, \mathbb{B})$ boundary value and $f = p^+ f^+ + p^- f^-$, it follows, by Proposition \ref{propLqiff}, that $f^+, f^- \in L^1(\p D)$. Now, since 
\[
    f(re^{i\theta}) - f(e^{i\theta}) = p^+ (f^+(re^{i\theta}) - f^+(e^{i\theta})) + p^- (f^-(re^{i\theta}) - f^-(e^{i\theta})),
\]
for every $r^{i\theta} \in D$, it follows that 
\begin{align*}
\int_0^{2\pi} |f^\pm (re^{i\theta}) - f^\pm(e^{i\theta})|\,d\theta
&\leq \sqrt{2} \int_0^{2\pi} ||f(re^{i\theta} - f(e^{i\theta})||_{\bc}\,d\theta ,
\end{align*}
for every $r \in (0,1)$. Thus, 
\[
    \lim_{r \nearrow 1}\int_0^{2\pi} |f^\pm (re^{i\theta}) - f^\pm(e^{i\theta})|\,d\theta
\leq \sqrt{2} \lim_{r \nearrow 1}\int_0^{2\pi} ||f(re^{i\theta} - f(e^{i\theta})||_{\bc}\,d\theta  =0.
\]
In other words, 
\[
\lim_{r \nearrow 1}\int_0^{2\pi} |f^\pm (re^{i\theta}) - f^\pm(e^{i\theta})|\,d\theta = 0.
\]
By Theorem 3.6 of \cite{WB3}, every $L^1(D)$ function with $L^1(\p D)$ boundary value that converges to that boundary value in the $L^1$ norm has a distributional boundary value and the distributional boundary value is equal to the $L^1(\p D)$ boundary value as distributions. Hence, $f^+_b$ and $f^-_b$ exist and $f^+_b=f^+|_{\p D}$ and $f^-_b=f^-|_{\p D}$. So, for $\varphi \in C^\infty(\p D)$, 
\begin{align*}
    \lim_{r\nearrow 1} \int_0^{2\pi} f(re^{i\theta}) \varphi(e^{i\theta}) \,d\theta 
    &= p^+\lim_{r\nearrow 1} \int_0^{2\pi} f^+(re^{i\theta}) \varphi(e^{i\theta}) \,d\theta + p^- \lim_{r\nearrow 1} \int_0^{2\pi} f(re^{i\theta}) \varphi(e^{i\theta}) \,d\theta < \infty,
\end{align*}
and $f_b$ exists. Since $f = p^+ f^+ + p^- f^-$, it follows that $f_b = p^+ f^+_b + p^- f^-_b$. Since $f^+_b=f^+|_{\p D}$ and $f^-_b=f^-|_{\p D}$, it follows that 
\begin{align*}
    f|_{\p D} &= p^+ f^+|_{\p D} + p^- f^-|_{\p D} \\
    &= p^+ f^+_b + p^- f^-_b\\
    &= f_b.
\end{align*}
\end{proof}

\subsection{Generalizations of the Hardy Spaces}

Recently, there has been progress towards building a more general Hardy space theory of complex-valued functions on the disk that is not dependent on holomorphicity. In these generalized Hardy classes, functions (and in the higher-order variants, their derivatives) are required to have a finite $H^p$ norm. However, instead of requiring holomorphicity of the function, the requirement for membership is that the function solves a certain generalization of the classic Cauchy-Riemann equation. For reference, some generalizations that have been considered include (but are certainly not limited to): 
\begin{itemize}
        \item Generalized analytic functions, i.e., solutions of the Vekua equation
        \[
            \frac{\p w}{\p z^*} = Aw + Bw^*
        \]
        in \cite{KlimBook}, as well as \cite{ PozHardy, CompOp, moreVekHardy, conjbel}. For a three-dimensional consideration, see \cite{threedvek}.
        \item Quasiconformal maps, i.e., solutions of the Beltrami equation
        \[
            \frac{\p w}{\p z^*} = \mu \frac{\p w}{\p z},
        \]
        where $\mu$ is a measurable function bounded in the $L^\infty$ norm by a constant less than one, see \cite{KlimBook, quasiHardy, quasiHardy2, quasiHardy3}.
        \item Solutions of the conjugate Beltrami equation
        \[
            \frac{\p w}{\p z^*} = \mu \frac{\p w^*}{\p z^*},
        \]
        where $\mu$ is a measurable function bounded in the $L^\infty$ norm by a constant less than one, see \cite{PozHardy, CompOp, moreVekHardy, conjbel}.
        \item Poly-analytic functions, i.e., solutions of 
        \[
            \left(\frac{\p}{\p z^*}\right)^n w = 0
        \]
        studied in \cite{polyhardy}.
        \item Solutions of the higher-order iterated Vekua equations
        \[
            \left(\frac{\p}{\p z^*} - A - B C(\cdot)\right)^n w = 0,
        \]
         studied in \cite{metahardy, WB3, WBD} for various classes of coefficients $A,B$.
         \item Solutions of the general nonhomogeneous Cauchy-Riemann equations of first-order
         \[
            \frac{\p w}{\p z^*} = f
         \]
         and higher order
         \[
            \left(\frac{\p }{\p z^*}\right)^n w = f
         \]
         found in \cite{WB}.
\end{itemize}

The main goals of this paper is to extend results found in \cite{WB} concerning representation and boundary behavior to the setting of Hardy-type classes of functions defined on $D$ that take values in $\bc$. For completeness, we recall the relevant results from those papers and provide extensions when they will be needed later. 

\begin{deff}\label{cgenhardydef}
For $0 < p < \infty$ and $f: D \to \C$, we define the Hardy classes $H^p_f(D)$ to be the collection of functions $w: D \to \C$ such that 
\[
    \frac{\p w}{\p z^*} = f
\]
and 
\[
    \sup_{0 < r < 1} \int_0^{2\pi} |w(re^{i\theta})|^p \,d\theta < \infty.
\]
\end{deff}

\begin{comm}
    For $f \equiv 0$, $H^p_f(D) = H^p_0(D)$ is exactly the classic holomorphic Hardy space $H^p(D)$. 
\end{comm}

Using the $T(\cdot)$ operator defined in Section \ref{holohardy}, we have the following representation for functions that satisfy nonhomogeneous Cauchy-Riemann equations.

\begin{theorem}[Theorem 1.16 \cite{Vek}]\label{oneonesix}
If $w: D \to \C$ satisfies $\frac{\p w}{\p z^*} = f \in L^1(D)$, then 
\begin{equation}\label{cp}
w(z) = \varphi(z) - \frac{1}{\pi} \iint_D \frac{f(\zeta)}{\zeta - z}\,d\eta\,d\xi,
\end{equation}
where $\zeta = \eta + i\xi$ and $\varphi \in Hol(D)$. 
\end{theorem}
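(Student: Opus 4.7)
The plan is to use the Cauchy--Pompieu operator $T(\cdot)$ from Definition \ref{vekopondisk} as an explicit right inverse to $\frac{\p}{\p z^*}$ and subtract off the correct piece of $w$ so that what remains is annihilated by $\frac{\p}{\p z^*}$. Concretely, I would define
\[
    \varphi(z) := w(z) - T(f)(z) = w(z) + \frac{1}{\pi}\iint_D \frac{f(\zeta)}{\zeta - z}\,d\eta\,d\xi,
\]
and then argue that $\varphi \in Hol(D)$; once this is done, the stated formula (\ref{cp}) is just a rearrangement.

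The first substantive step is to verify that $T(f)$ is well defined for $f\in L^1(D)$ and that it satisfies $\frac{\p}{\p z^*} T(f) = f$ in the sense of distributions. The kernel $1/(\zeta-z)$ is locally integrable on $D$, so $T(f)$ makes sense as a locally integrable function on $D$; the identity $\frac{\p}{\p z^*}T(f) = f$ is the standard Cauchy--Pompieu fact already recorded in the paper immediately after Definition \ref{vekopondisk} (and for functions of higher integrability is a consequence of Theorem \ref{Vekonepointtwentysix} combined with differentiation under the integral away from the singularity).

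With this in hand, the computation becomes
\[
    \frac{\p \varphi}{\p z^*} = \frac{\p w}{\p z^*} - \frac{\p T(f)}{\p z^*} = f - f = 0
\]
in the sense of distributions on $D$. The remaining step is to conclude that a distributional solution of the homogeneous Cauchy--Riemann equation on $D$ is holomorphic, i.e.\ lies in $Hol(D)$ in the sense of Section \ref{holohardy}. This is Weyl's lemma (elliptic regularity for $\dbar$): any $\varphi \in \mathcal D'(D)$ with $\frac{\p \varphi}{\p z^*} = 0$ is smooth and hence holomorphic classically. Rearranging $w = \varphi + T(f)$ then gives exactly (\ref{cp}).

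The main potential obstacle is not the algebra, which is essentially a one-line computation once the right inverse is known, but rather the regularity issue for the generic $L^1$ hypothesis on $f$: one must justify $\frac{\p}{\p z^*}T(f) = f$ distributionally without assuming any smoothness of $f$, and then invoke elliptic regularity to upgrade the distributional holomorphicity of $\varphi$ to membership in $Hol(D)$. Both of these are standard but deserve to be flagged, since the hypothesis $f \in L^1(D)$ is the weakest at which the representation still makes sense pointwise almost everywhere.
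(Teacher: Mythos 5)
Your argument is correct, but note that the paper offers no proof of this statement at all: it is quoted verbatim as Theorem 1.16 of Vekua's \emph{Generalized Analytic Functions} \cite{Vek}, so the only comparison available is with the cited source, whose proof is exactly your decomposition $\varphi = w - T(f)$ together with $\frac{\p}{\p z^*}T(f)=f$ and the regularity step (a vanishing generalized $\bar\partial$-derivative forces holomorphy). The two points you flag --- well-definedness of $T(f)$ for $f\in L^1(D)$ and the distributional identity $\frac{\p}{\p z^*}T(f)=f$ without smoothness of $f$ --- are indeed the only places requiring care, and both are handled in \cite{Vek} (the first is the $q=1$ case of Theorem \ref{Vekonepointtwentysix}); the one imprecision is your parenthetical suggesting that differentiation under the integral away from the singularity yields the identity, since the singularity is precisely what produces $f$ rather than $0$.
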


The next result is clear by a direct computation. 

\begin{prop}
    For every $0 < p < \infty$, $f \in L^q(D)$, $q>2$, and $\varphi \in H^p(D)$, the function 
    \[
        w = \varphi + T(f)
    \]  
    is an element of $H^p_f(D)$. The result holds for $1 < q \leq 2$, so long as $p < \frac{q}{2-q}$.
\end{prop}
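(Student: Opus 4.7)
The plan is to verify the two conditions defining membership in $H^p_f(D)$ separately for the function $w = \varphi + T(f)$: namely that $\p w/\p z^* = f$, and that the $H^p$-type circle integrals are uniformly bounded in $r$.

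First, I would handle the differential equation. Since $\varphi \in H^p(D) \subset Hol(D)$, we have $\p \varphi/\p z^* = 0$. The comment following Definition \ref{vekopondisk} recalls that $T(\cdot)$ is a right inverse of $\p/\p z^*$ on $L^1(D)$, so $\p T(f)/\p z^* = f$. Linearity then gives $\p w/\p z^* = f$. This step is immediate and does not depend on the value of $q$.

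Next, I would estimate the circle integrals. Using the elementary inequality $|a+b|^p \leq C_p(|a|^p + |b|^p)$ (valid for all $p>0$), I would write
\[
\int_0^{2\pi} |w(re^{i\theta})|^p\,d\theta \leq C_p \int_0^{2\pi} |\varphi(re^{i\theta})|^p\,d\theta + C_p \int_0^{2\pi} |T(f)(re^{i\theta})|^p\,d\theta.
\]
The first term is bounded by $C_p\|\varphi\|_{H^p}^p$ uniformly in $r$ by definition of $H^p(D)$, so the task reduces to controlling $\int_0^{2\pi}|T(f)(re^{i\theta})|^p\,d\theta$ uniformly in $r \in (0,1)$.

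For the two cases of $q$, I would argue as follows. If $q > 2$, Theorem \ref{onepointnineteen} gives $T(f) \in C^{0,\alpha}(\overline{D})$ with $\alpha = (q-2)/q$, so in particular $T(f)$ is bounded on $\overline{D}$, and the circle integral is bounded by $2\pi \|T(f)\|_{L^\infty(\overline{D})}^p$, uniformly in $r$ and for any $p > 0$. If $1 < q \leq 2$ and $p < q/(2-q)$, then by Theorem \ref{onefourseven} the function $T(f)|_{\p D(0,r)}$ lies in $L^\gamma(\p D(0,r))$ with
\[
\|T(f)\|_{L^\gamma(\p D(0,r))} \leq C\|f\|_{L^q(D)}
\]
for any $\gamma$ with $1 < \gamma < q/(2-q)$, and the constant $C$ is independent of $r$. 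Choosing $\gamma \in (\max(1,p), q/(2-q))$, Hölder's inequality on the finite measure space $(\p D(0,r), d\theta)$ yields
\[
\int_0^{2\pi} |T(f)(re^{i\theta})|^p\,d\theta \leq (2\pi)^{1 - p/\gamma} \|T(f)\|_{L^\gamma(\p D(0,r))}^p \leq C'\|f\|_{L^q(D)}^p,
\]
uniformly in $r$. Taking the supremum over $r \in (0,1)$ finishes the proof.

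The only delicate point is the case $1 < q \leq 2$, where the restriction $p < q/(2-q)$ is precisely what allows one to select a $\gamma$ satisfying the hypothesis of Theorem \ref{onefourseven} while simultaneously dominating $p$; the rest of the argument is a straightforward combination of Minkowski/Hölder with the cited regularity results for the Cauchy–Pompieu operator.
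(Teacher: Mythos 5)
Your proof is correct and follows essentially the same route the paper takes: the paper leaves this proposition as a ``direct computation,'' and the computation it has in mind is exactly the one it writes out for the converse containment in Theorem \ref{thm: 2.10WBextension} --- quasi-triangle inequality, boundedness of $T(f)$ via Theorem \ref{onepointnineteen} when $q>2$, and the uniform $L^\gamma(\p D(0,r))$ bound of Theorem \ref{onefourseven} when $1<q\leq 2$. The only cosmetic difference is that you pass from the $L^\gamma$ bound to the $L^p$ bound by H\"older on the circle, whereas the paper splits the circle into the sets where $|T(f)|\leq 1$ and $|T(f)|>1$ and uses $|T(f)|^p\leq 1+|T(f)|^\gamma$; both are valid and rest on the same restriction $p<\frac{q}{2-q}$ to permit the choice of $\gamma$.
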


The next theorem is an extension of Theorem 2.10 from \cite{WB} where only the cases of $p \leq 1$ were considered. The argument is the same as in that paper, but we include it here for completeness and to have occasion to note that the restriction on the values of $p$ from the original result was unnecessary for existence of the point-wise representation. 

\begin{theorem}\label{thm: 2.10WBextension}
Let $0 < p < \infty$, $q > 2$, and $f \in L^q(D)$. Every $w \in H^p_f(D)$ has a representation 
\[
    w = \varphi + T(f)
\]
and $\varphi \in H^p(D)$. Also, the boundary value in the sense of distributions $w_b$ exists and, for $p\leq 1$, can be represented as 
\[
    w_b = \sum_{n=1}^\infty c_n a_n + T(f)_b,
\]
where $\{c_n\} \in \ell^p(\mathbb{C})$, $\{a_n\}$ is a collection of $p$-atoms, and $T(f)_b \in C^{0,\alpha}(\p D)$, $\alpha = \frac{q-2}{q}$. The result holds when $1 < q \leq 2$, so long as $p$ satisfies $p < \frac{q}{2-q}$, with $T(f)_b \in L^\gamma(\p D)$, for $1 < \gamma< \frac{q}{q-2}$.
\end{theorem}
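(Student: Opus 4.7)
The plan is to reduce everything to known results by applying the Cauchy–Pompieu decomposition and then separately controlling the holomorphic part and the error term $T(f)$. First I would apply Theorem \ref{oneonesix} to $w \in H^p_f(D)$: since $f \in L^q(D) \subset L^1(D)$ (as $D$ has finite measure and $q \geq 1$), the theorem yields a decomposition $w = \varphi + T(f)$ with $\varphi \in Hol(D)$. The pointwise representation part of the statement is then immediate once we verify that $\varphi$ actually lies in $H^p(D)$.

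The next step is to check $\varphi \in H^p(D)$, i.e., that $\sup_{0<r<1} \int_0^{2\pi}|\varphi(re^{i\theta})|^p\,d\theta < \infty$. Using the elementary inequality $|a+b|^p \leq C_p(|a|^p + |b|^p)$ for all $p>0$, it suffices to show that the same uniform bound holds for $T(f)$ on the circles $\partial D(0,r)$. In the case $q>2$, Theorem \ref{onepointnineteen} gives $T(f) \in C^{0,\alpha}(\overline{D})$, so $T(f)$ is uniformly bounded on $\overline{D}$ and the bound is trivial. In the case $1 < q \leq 2$ with $p < \frac{q}{2-q}$, choose $\gamma$ with $\max(p,1) < \gamma < \frac{q}{2-q}$; then Theorem \ref{onefourseven} gives $\sup_{0<r\leq 1}\|T(f)\|_{L^\gamma(\partial D(0,r))} \leq C \|f\|_{L^q(D)}$, and Hölder's inequality on the finite-measure space $\partial D(0,r)$ transfers this to a uniform $L^p$ bound. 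Subtracting $T(f)$ from $w$ then places $\varphi$ in $H^p(D)$.

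For the boundary-value part, I would invoke Theorem \ref{GHJH23point1} together with Theorem \ref{GHJH23point1corr} to conclude that $\varphi_b$ exists as a distribution on $\partial D$. For the error term, in the case $q>2$, the Hölder continuity up to $\overline{D}$ from Theorem \ref{onepointnineteen} gives $T(f)_b = T(f)|_{\partial D} \in C^{0,\alpha}(\partial D)$ directly. In the case $1 < q \leq 2$, Theorem \ref{onefoureight} gives $L^\gamma$-norm convergence of $T(f)$ on circles to its boundary trace $T(f)|_{\partial D} \in L^\gamma(\partial D)$, and the $L^1$ version of Theorem \ref{lonedistbv} (i.e., Theorem 3.6 of \cite{WB3}, which Theorem \ref{lonedistbv} bicomplexifies) lets me identify this $L^\gamma$ boundary value with the distributional boundary value $T(f)_b$. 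Summing, $w_b = \varphi_b + T(f)_b$ exists.

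Finally, for $p \leq 1$, Theorem \ref{GHJH2twopointtwo} applied to $\varphi \in H^p(D)$ produces an atomic expansion $\varphi_b = \sum_{n=1}^\infty c_n a_n$ in $\mathcal{D}'(\partial D)$ with $\{c_n\} \in \ell^p(\mathbb{C})$ and $\{a_n\}$ a sequence of $p$-atoms; adding the error term $T(f)_b$ gives exactly the asserted decomposition of $w_b$. I expect the main technical point, however minor, to be the uniform-in-$r$ control of the $L^p$ averages of $T(f)$ in the subcritical regime $1 < q \leq 2$, since there one must interpolate between Theorem \ref{onefourseven} (which only supplies $\gamma > 1$) and the desired $p$ that may be less than $1$; everything else is a direct assembly of already-cited results.
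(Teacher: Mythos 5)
Your proposal is correct and follows essentially the same route as the paper: Cauchy--Pompieu decomposition via Theorem \ref{oneonesix}, uniform control of the $L^p$ circle averages of $T(f)$ (via Theorem \ref{onepointnineteen} for $q>2$ and via the uniform $L^\gamma$ bound of Theorem \ref{onefourseven} for $1<q\leq 2$), then Theorem \ref{GHJH2twopointtwo} for the atomic part and Theorem 3.6 of \cite{WB3} to identify $T(f)_b$ with the $L^\gamma$ trace. The only cosmetic difference is that you pass from the uniform $L^\gamma$ bound to the $L^p$ bound by H\"older on the finite-measure circle, whereas the paper splits the circle into the sets where $|T(f)|\leq 1$ and $|T(f)|>1$; both are the same standard embedding $L^\gamma\subset L^p$ and equally valid.
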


\begin{proof}
    By assumption, $\frac{\p w}{\p z^*} = f \in L^q(D) \subset L^1(D)$, for $q > 1$. So, by Theorem \ref{oneonesix}, $w$ has a representation 
\begin{equation}\label{repofsecondkind}
w(z) = \varphi(z) + T(f)(z),
\end{equation}
where $\varphi$ is holomorphic in $D$. Observe that
\begin{align*}
&\int_0^{2\pi} |\varphi(re^{i\theta})|^p \,d\theta  \\
&= \int_0^{2\pi} |w(re^{i\theta}) - T(f)(re^{i\theta})|^p \,d\theta  \\
&\leq C_p \left(\int_0^{2\pi} |w(re^{i\theta})|^p \,d\theta + \int_0^{2\pi} | T(f)(re^{i\theta})|^p \,d\theta\right)\\
&\leq C_p\left(||w||^p_{H^p(D)} + \int_0^{2\pi} | T(f)(re^{i\theta})|^p \,d\theta \right),
\end{align*}
where $C_p$ is a constant that depends on only on $p$.

If $q>2$, then, by Theorem \ref{onepointnineteen}, $T(f) \in C^{0, \alpha}(\overline{D})$. Hence, there exists a constant $M>0$ such that $|T(f)(z)| \leq M$, for all $z \in \overline{D}$. So, for each $r \in (0,1)$,
\begin{align*}
    \int_0^{2\pi} | T(f)(re^{i\theta})|^p \,d\theta &\leq \int_0^{2\pi} M^p \,d\theta = 2\pi M^p  < \infty.
\end{align*}
Thus, 
\[
\sup_{0 < r < 1} \int_0^{2\pi} | T(f)(re^{i\theta})|^p \,d\theta < \infty.
\]
Therefore, 
\begin{align*}
\sup_{0 < r < 1} \int_0^{2\pi} |\varphi(re^{i\theta})|^p \,d\theta &\leq C_p\left(||w||^p_{H^p(D)} + \sup_{0 < r < 1}\int_0^{2\pi} | T(f)(re^{i\theta})|^p \,d\theta \right) \\
&\leq C_p\left(||w||^p_{H^p(D)} + 2\pi M^p \right) < \infty,
\end{align*}
and $\varphi \in H^p(D)$. 

Now, suppose that $1 < q \leq 2$ and $p$ satisfies $p < \frac{q}{2-q}$. For each $r$, let $D_{r,*} = \{\theta \in [0,2\pi] : |T(f)(re^{i\theta})| \leq 1 \}$ and $D^{r,*} = \{\theta \in [0,2\pi] : |T(f)(re^{i\theta})| > 1 \}$. By Theorem \ref{onefourseven}, since $f \in L^q(D), 1 < q \leq 2$, it follows that 
\[
\sup_{0 < r < 1}||T(f)||_{L^\gamma(\p D(0,r))} \leq C ||f||_{L^q(D)},
\]
where $\gamma$ satisfies $1 < \gamma < \frac{q}{2-q}$ and $C$ is a constant that does not depend on $f$. Since $p < \frac{q}{2-q}$, it follows that there exist a $\gamma$ such that $p < \gamma < \frac{q}{2-q}$, and 
\begin{align*}
&\int_0^{2\pi} | T(f)(re^{i\theta})|^p \,d\theta \\
&= \int_{D_{r,*}} | T(f)(re^{i\theta})|^p \,d\theta + \int_{D^{r,*}} | T(f)(re^{i\theta})|^p \,d\theta \\
&\leq 2\pi + \int_0^{2\pi} | T(f)(re^{i\theta})|^\gamma \,d\theta \\
&\leq 2\pi + C||f||_{L^q(D)} < \infty.
\end{align*}
Hence, 
\begin{align*}
\int_0^{2\pi} |\varphi(re^{i\theta})|^p \,d\theta 
& \leq  ||w||^p_{H^p(D)} + \int_0^{2\pi} | T(f)(re^{i\theta})|^p \,d\theta \\
&\leq  ||w||^p_{H^p(D)} +  2\pi + C||f||_{L^q(D)} < \infty,
\end{align*}
where the right hand side has no dependence on $r$. Thus, 
\[
\sup_{0 < r < 1} \int_0^{2\pi} |\varphi(re^{i\theta})|^p \,d\theta \leq ||w||^p_{H^p(D)} +  2\pi + C||f||_{L^q(D)} < \infty,
\]
and $\varphi \in H^p(D)$. 

Now, we restrict to $p\leq 1$. By Theorem \ref{GHJH2twopointtwo}, since $\varphi \in H^p(D)$, it follows that there exists $\{c_n\} \in \ell^p(\mathbb{C})$  and a sequence of $p$-atoms $\{a_n\}$ such that $\varphi_b = \sum_{n=1}^\infty c_n a_n$. By Theorems \ref{Vekonepointtwentysix}, \ref{onefourseven}, and \ref{onefoureight}, $T(f) \in L^1(D)$, $T(f)$ has an $L^1(\p D)$ boundary value $T(f)|_{\p D}$, and $T(f)$ converges to $T(f)|_{\p D}$ in the $L^1$ norm, for $q>1$. By Theorem 3.6 from \cite{WB3}, $T(f)_b$ exists and $T(f)|_{\p D} = T(f)_b$. Therefore, 
\[
    w_b = \sum_{n=1}^\infty c_n a_n + T(f)_b.
\]
By Theorem \ref{onepointnineteen}, if $q>2$, then $T(f)_b \in C^{0,\alpha}(\p D)$, $\alpha = \frac{q-2}{q}$. 

\end{proof}

\begin{corr}\label{genhardyrepcorr}
    For $0 < p < \infty$, $q > 2$, and $f \in L^q(D)$, $w = \varphi + T(f) \in H^p_f(D)$ if and only if $\varphi \in H^p(D)$. The result holds when $1 < q \leq 2$, so long as $p$ satisfies $p < \frac{q}{2-q}$.
\end{corr}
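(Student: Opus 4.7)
The corollary is a direct repackaging of Theorem \ref{thm: 2.10WBextension} together with the proposition stated immediately before it, so the plan is simply to assemble the two implications, both of which are essentially already in hand.

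For the backward direction $\varphi \in H^p(D) \Rightarrow w = \varphi + T(f) \in H^p_f(D)$, I would cite the proposition stated immediately before Theorem \ref{thm: 2.10WBextension}, which asserts exactly this under the same hypotheses on $p$ and $q$. The verification requires only two facts: first, $\frac{\p w}{\p z^*} = \frac{\p \varphi}{\p z^*} + \frac{\p T(f)}{\p z^*} = 0 + f = f$; second, $\sup_{0<r<1} \int_0^{2\pi} |T(f)(re^{i\theta})|^p \, d\theta < \infty$, which follows from Theorem \ref{onepointnineteen} (giving $T(f) \in C^{0,\alpha}(\overline D)$, hence uniformly bounded) when $q > 2$, and from Theorem \ref{onefourseven} combined with the constraint $p < q/(2-q)$ when $1 < q \leq 2$. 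Combining these with $\varphi \in H^p(D)$ through the standard inequality $|a+b|^p \leq C_p(|a|^p + |b|^p)$ yields the desired uniform bound on $\int_0^{2\pi}|w(re^{i\theta})|^p \, d\theta$.

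For the forward direction $w = \varphi + T(f) \in H^p_f(D) \Rightarrow \varphi \in H^p(D)$, the plan is to read off the conclusion directly from the first half of the proof of Theorem \ref{thm: 2.10WBextension}. Once $f$ is given, $T(f)$ is a fixed function, so $\varphi = w - T(f)$ is uniquely determined; holomorphicity of $\varphi$ follows from $\frac{\p \varphi}{\p z^*} = f - f = 0$, and the $H^p$ size bound is produced there by applying the same quasi-triangle inequality to
\[
\int_0^{2\pi} |\varphi(re^{i\theta})|^p\,d\theta \leq C_p\Bigl( \|w\|_{H^p_f(D)}^p + \int_0^{2\pi} |T(f)(re^{i\theta})|^p \, d\theta\Bigr),
\]
and invoking the same uniform control of $T(f)$ on concentric circles as in the backward direction.

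The main ``obstacle'' is entirely cosmetic: there is nothing new to prove, and the role of the corollary is simply to record the uniqueness of the decomposition $w = \varphi + T(f)$ given $f$, so that Theorem \ref{thm: 2.10WBextension} upgrades from an existence-of-representation statement to a characterization. I would therefore write the proof as a short paragraph stating that the backward direction is the cited proposition, the forward direction is contained in the proof of Theorem \ref{thm: 2.10WBextension}, and both implications use identical estimates on $T(f)$.
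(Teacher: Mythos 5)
Your proposal is correct and matches the paper's (implicit) argument exactly: the paper offers no separate proof for this corollary precisely because the backward implication is the proposition immediately preceding Theorem \ref{thm: 2.10WBextension} and the forward implication is the first half of that theorem's proof, with $\varphi = w - T(f)$ uniquely determined once $f$ is fixed. Nothing further is needed.
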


The next theorem describes the $L^p$ function boundary behavior of the elements of the classes $H^p_f(D)$. This was not addressed in \cite{WB} which focused on describing an atomic decomposition for distributional boundary values of the small $p$ classes. 

\begin{theorem}\label{thm: nonhomogHpbvcon}
For $0 < p < \infty$ and $q>2$, every $w \in H^p_f(D)$, where $f \in L^q(D)$, has a nontangential boundary value $w_{nt} \in L^p(\p D)$ and 
\[
\lim_{r \nearrow 1} \int_0^{2\pi} |w_{nt}(e^{i\theta}) - w(re^{i\theta}) |^p \, d\theta  = 0.
\]
The result holds when $1 < q \leq 2$, so long as $p$ satisfies $p < \frac{q}{2-q}$. 
\end{theorem}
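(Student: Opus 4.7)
The plan is to invoke the pointwise representation $w = \varphi + T(f)$ from Theorem \ref{thm: 2.10WBextension}, where $\varphi \in H^p(D)$, and then treat the two summands separately. The classical Hardy space part is immediate: by Theorem \ref{bvcon}, $\varphi$ has nontangential boundary values $\varphi_{nt} \in L^p(\p D)$ and $\int_0^{2\pi} |\varphi_{nt}(e^{i\theta}) - \varphi(re^{i\theta})|^p \, d\theta \to 0$ as $r \nearrow 1$. The remaining work is to produce an analogous boundary value $T(f)_b$ for $T(f)$ with $L^p$ convergence, after which I would set $w_{nt} := \varphi_{nt} + T(f)_b$ and combine the two pieces using the convexity inequality $|a+b|^p \leq C_p(|a|^p + |b|^p)$.

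For the case $q > 2$, Theorem \ref{onepointnineteen} gives $T(f) \in C^{0,\alpha}(\overline{D})$. Therefore $T(f)$ extends as a uniformly continuous, bounded function to $\overline{D}$; its nontangential (and indeed full) boundary limit exists everywhere, belongs to $L^\infty(\p D) \subset L^p(\p D)$, and uniform convergence on $\overline{D}$ immediately yields the $L^p$ convergence of $T(f)(re^{i\theta})$ to its boundary value. For the case $1 < q \leq 2$ with $p < q/(2-q)$, I would fix $\gamma$ satisfying $\max(1,p) < \gamma < q/(2-q)$, which is possible by hypothesis. Theorem \ref{onefourseven} then supplies a uniform $L^\gamma(\p D(0,r))$ bound for $T(f)|_{\p D(0,r)}$, so that the boundary restriction $T(f)_b$ lies in $L^\gamma(\p D)$ and, since $\p D$ has finite measure, in $L^p(\p D)$. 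Theorem \ref{onefoureight} supplies $L^\gamma$-norm convergence of $T(f)(re^{i\theta})$ to $T(f)_b$, and Hölder's inequality with exponent $\gamma/p > 1$ upgrades this to $L^p$-norm convergence.

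The main delicate point is the nontangential (pointwise almost-everywhere) character of the $T(f)$ boundary limit in the range $1 < q \leq 2$, where the cited results deliver only radial $L^\gamma$ convergence. Here I would observe that since $f \in L^q(D)$ with $q > 1$, standard Calderón--Zygmund estimates place $T(f)$ in a Sobolev space strong enough to guarantee a Fatou-type nontangential limit almost everywhere on $\p D$ which must coincide with the $L^\gamma$ boundary trace $T(f)_b$. Granting this identification, adding $\varphi_{nt}$ and $T(f)_b$ yields the desired $w_{nt} \in L^p(\p D)$, and the $L^p$-norm convergence statement for $w$ follows at once from the two $L^p$ convergence facts above combined via the convexity estimate.
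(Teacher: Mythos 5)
Your proposal is correct and follows essentially the same route as the paper: decompose $w = \varphi + T(f)$ via Theorem \ref{thm: 2.10WBextension}, handle $\varphi$ with Theorem \ref{bvcon}, handle $T(f)$ with the $T$-operator estimates (Theorems \ref{onepointnineteen}, \ref{onefourseven}, \ref{onefoureight}), and combine with the $p$-convexity inequality. If anything you are more careful than the paper, which simply asserts that Theorems \ref{onefourseven} and \ref{onefoureight} give a nontangential limit for $T(f)$ in the range $1<q\leq 2$, whereas you explicitly flag that those results only deliver radial $L^\gamma$ convergence and sketch the extra (Sobolev/Fatou-type) argument needed to identify a genuine nontangential limit.
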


\begin{proof}
    By Theorem \ref{thm: 2.10WBextension}, 
    \[
        w = \varphi + T(f)
    \]
    and $\varphi \in H^p(D)$. By Theorem \ref{bvcon}, $\varphi_{nt} \in L^p(\p D)$ and 
    \[
        \lim_{r \nearrow 1} \int_0^{2\pi} |\varphi_{nt}(e^{i\theta}) - \varphi(re^{i\theta}) |^p \, d\theta  = 0.
    \]
    By Theorems \ref{onefourseven} and \ref{onefoureight}, $T(f)_{nt}\in L^p(\p D)$ and 
    \[
        \lim_{r \nearrow 1} \int_0^{2\pi} |T(f)_{nt}(e^{i\theta}) - T(f)(re^{i\theta}) |^p \, d\theta  = 0.
    \]
    Thus, $w_{nt} = \varphi_{nt} + T(f)_{nt}$ exists and is in $L^p(\p D)$. For each $r \in (0,1)$, observe that 
    \begin{align*}
        &\int_0^{2\pi} |w_{nt}(e^{i\theta}) - w(re^{i\theta}) |^p \, d\theta\\
        &= \int_0^{2\pi} |\varphi_{nt}(e^{i\theta}) + T(f)_{nt}(e^{i\theta})- (\varphi(re^{i\theta})-T(f)(re^{i\theta})) |^p \, d\theta\\
        &\leq C_p\left(\int_0^{2\pi} |\varphi_{nt}(e^{i\theta}) - \varphi(re^{i\theta})|^p\,d\theta + \int_0^{2\pi} |T(f)_{nt}(e^{i\theta})-T(f)(re^{i\theta}) |^p \, d\theta \right),
    \end{align*}
    where $C_p$ is a constant that depends only on $p$. Since 
    \[
        \lim_{r \nearrow 1} \int_0^{2\pi} |\varphi_{nt}(e^{i\theta}) - \varphi(re^{i\theta}) |^p \, d\theta  = 0
    \]
    and
    \[
        \lim_{r \nearrow 1} \int_0^{2\pi} |T(f)_{nt}(e^{i\theta}) - T(f)(re^{i\theta}) |^p \, d\theta  = 0,
    \]
    it follows that 
    \[
        \lim_{r \nearrow 1} C_p\left(\int_0^{2\pi} |\varphi_{nt}(e^{i\theta}) - \varphi(re^{i\theta})|^p\,d\theta + \int_0^{2\pi} |T(f)_{nt}(e^{i\theta})-T(f)(re^{i\theta}) |^p \, d\theta \right) = 0.
    \]
    Thus, 
    \[
        \lim_{ r \nearrow 1} \int_0^{2\pi} |w_{nt}(e^{i\theta}) - w(re^{i\theta}) |^p \, d\theta = 0.
    \]
\end{proof}

In \cite{WB}, the author used the representation from Theorem \ref{thm: 2.10WBextension} (Theorem 2.10 in \cite{WB}) to extend Theorem \ref{hilbertholohardy} in the following way. 

\begin{deff}
For $p$ a positive real number and $w \in L^q(D)$, $q>1$, denote by $(H^p_w(D))_b$ the collection of distributional boundary values of the functions $f \in H^p_w(D)$. 
\end{deff}

\begin{deff}
We define the quasi-norm $||\cdot||_{(H^p_w(D))_b}$ on the class of distributions $(H^p_w(D))_b$ by 
\[
    ||f_b||_{(H^p_w(D))_b} := ||\sum_{n=1}^\infty c_n a_n ||_{at} + ||T(w)_b||_{L^\gamma(\p D)},
\]
where $f_b = \sum_{n=1}^\infty c_n a_n + T(w)_b$ is the distributional boundary value of $f \in H^p_w(D)$, $0 < p \leq 1$, $w \in L^q(D)$, $q>1$. 
\end{deff}

\begin{theorem}[Theorem 3.7 \cite{WB}]\label{hilbertcontnonhomoghardy}
For $0 < p \leq 1$ and $w \in L^q(D)$, $q>1$, the Hilbert transform is a continuous operator on $(H^p_w(D))_b$ with the quasi-norm $||\cdot||_{(H^p_w(D))_b}$.
\end{theorem}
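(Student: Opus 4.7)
The plan is to exploit the canonical decomposition of $f_b$ provided by Theorem \ref{thm: 2.10WBextension} and apply the Hilbert transform termwise, handling the two pieces with two already-established continuity results: Theorem \ref{hilbertholohardy} on $(H^p(D))_b$ with the atomic norm, and Theorem \ref{hilbertpgreaterthanone} (M. Riesz) on $L^\gamma(\p D)$.

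First I would take an arbitrary $f \in H^p_w(D)$ with $0 < p \leq 1$ and $w \in L^q(D)$, $q > 1$, and invoke Theorem \ref{thm: 2.10WBextension} to write
\[
    f_b = \sum_{n=1}^\infty c_n a_n + T(w)_b,
\]
where $\{c_n\} \in \ell^p(\C)$, the $a_n$ are $p$-atoms, and $T(w)_b \in L^\gamma(\p D)$ for some $\gamma > 1$ (with $\gamma = \infty$ admissible when $q > 2$ since $T(w)_b \in C^{0,\alpha}(\p D)$ in that case, but any fixed $\gamma > 1$ gives the bound we need via H\"older on $\p D$). The atomic series converges in $\mathcal{D}'(\p D)$ and is precisely the distributional boundary value of a function in $H^p(D)$.

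Next I would apply $H$ and use linearity/continuity in $\mathcal{D}'(\p D)$ to obtain
\[
    H(f_b) = H\!\left(\sum_{n=1}^\infty c_n a_n\right) + H(T(w)_b).
\]
For the first summand, since $\sum_n c_n a_n \in (H^p(D))_b$, Theorem \ref{hilbertholohardy} gives that $H(\sum_n c_n a_n)$ admits an atomic decomposition with
\[
    \left\| H\!\left(\sum_{n=1}^\infty c_n a_n\right) \right\|_{at} \leq C_1 \left\| \sum_{n=1}^\infty c_n a_n \right\|_{at}.
\]
For the second summand, because $T(w)_b \in L^\gamma(\p D)$ with $\gamma > 1$, the M. Riesz theorem (Theorem \ref{hilbertpgreaterthanone}) yields $H(T(w)_b) \in L^\gamma(\p D)$ with
\[
    \| H(T(w)_b) \|_{L^\gamma(\p D)} \leq C_2 \| T(w)_b \|_{L^\gamma(\p D)}.
\]

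Finally, adding the two estimates and using the definition of $\|\cdot\|_{(H^p_w(D))_b}$ gives
\[
    \| H(f_b) \|_{(H^p_w(D))_b} \leq \max\{C_1, C_2\}\,\| f_b \|_{(H^p_w(D))_b},
\]
which is the desired continuity. The only subtle point, and the main obstacle I expect, is justifying the termwise application of $H$ to the infinite atomic series in $\mathcal{D}'(\p D)$; this is handled because $H$ acts continuously on $\mathcal{D}'(\p D)$ on test functions via the adjoint formulation, and convergence of $\sum_n c_n a_n$ in the distributional topology is preserved by $H$. Everything else reduces to bookkeeping between the atomic and $L^\gamma$ pieces of the quasi-norm, ensuring that both previously cited theorems apply to the respective components produced by Theorem \ref{thm: 2.10WBextension}.
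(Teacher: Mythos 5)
Your proposal is correct and takes essentially the approach the paper intends: this theorem is imported from \cite{WB} without a proof here, but the proof the paper gives for its bicomplex analogue (Theorem \ref{Thm: bcnonhomoghhilbertrans}) is exactly your two-piece strategy --- decompose $f_b = \sum_n c_n a_n + T(w)_b$ via Theorem \ref{thm: 2.10WBextension}, apply Theorem \ref{hilbertholohardy} to the atomic part and Theorem \ref{hilbertpgreaterthanone} to the $L^\gamma$ part, and add the two estimates. The only cosmetic difference is that the paper records the conclusion as a bound on $||H(f_b)||_{L^p(\p D)}$ in terms of the quasi-norm of $f_b$, whereas you measure the output in the quasi-norm $||\cdot||_{(H^p_w(D))_b}$ itself; your component estimates justify that stronger phrasing, so this is a matter of presentation rather than a gap.
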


\section{Definitions}\label{DefsSect}

We provide the relevant definitions for the function classes that we study.

\begin{deff}
We define the $\bc$-holomorphic functions on $D$ to be the functions $f: D \to \bc$ such that 
    \[
        \dbar f = 0.
    \]
    We denote by $\holb$ the space of all $\bc$-holomorphic functions on $D$.  Similarly, we define the $\bc$-anti-holomorphic functions on $D$ to be the functions $f: D \to \bc$ such that 
    \[
        \p f = 0.
    \]
    We denote by $\overline{\holb}$ the space of all $\bc$-anti-holomorphic functions on $D$.
\end{deff}

\begin{deff}
Let $w: D \to \bc$. We define $\hw$ to be those functions $f: D \to \bc$ such that 
\[
    \dbar f = w.
\]
\end{deff}

\begin{deff}
    For $0 < p < \infty$ and a function $f: D \to \bc$, we define the bicomplex-$H^p$ norm to be
    \[
        ||f||_{H^p_{\bc}} := \sup_{0 < r < 1}\left( \int_0^{2\pi} ||f(re^{i\theta})||^p_{\bc} \,d\theta\right)^{1/p}.
    \]
\end{deff}

\begin{comm}
    The bicomplex-$H^p$ norm is only truly a norm when $p \geq 1$ and is a quasi-norm for $0 < p < 1$. Following the convention in the literature for analogous objects, we call this quantity a norm regardless of the value of $p$.  
\end{comm}

\begin{deff}
    Let $0 < p < \infty$. We define the bicomplex Hardy spaces $\hpb$ to be those functions $f \in \holb$ such that 
    $||f||_{H^p_{\bc}} < \infty$.
\end{deff}

\section{Bicomplex Hardy Spaces}\label{bcholohardysection}

\subsection{Representation}

In this section, we prove a representation result that presents a connection between bicomplex-valued Hardy spaces and associated $\mathbb{C}$-valued Hardy spaces. 

\begin{theorem}\label{thm: bchardyrep}
    For $0 < p < \infty$, a function $f = p^+f^+ + p^-f^- \in \hpb$ if and only if $(f^+)^*,f^- \in H^p(D)$. 
\end{theorem}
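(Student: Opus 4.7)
The plan is to decouple the bicomplex condition $f \in \hpb$ into two independent conditions on the complex scalar components $f^+$ and $f^-$ (more precisely, on $(f^+)^*$ and $f^-$), by exploiting the two tools already prepared in the background: Remark \ref{minusfunctionisholo}, which translates $\dbar f = 0$ into the pair of complex equations $\frac{\p (f^+)^*}{\p z^*} = 0$ and $\frac{\p f^-}{\p z^*} = 0$; and the two-sided estimate
\[
\frac{1}{\sqrt{2}}\,|w^\pm| \;\leq\; \|w\|_{\bc} \;\leq\; \frac{1}{\sqrt{2}}\bigl(|w^+|+|w^-|\bigr)
\]
from Remark \ref{basicestimate}, which will let us compare the $H^p_{\bc}$ norm of $f$ with the $H^p$ norms of its idempotent components on each circle of radius $r<1$.

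For the forward direction, assuming $f \in \hpb$, Remark \ref{minusfunctionisholo} immediately gives $(f^+)^*, f^- \in Hol(D)$. For the size condition, for each fixed $r \in (0,1)$ I would apply the lower estimate pointwise in $\theta$, raise to the $p$-th power, and integrate to obtain
\[
\int_0^{2\pi} |f^\pm(re^{i\theta})|^p\,d\theta \;\leq\; 2^{p/2}\int_0^{2\pi}\|f(re^{i\theta})\|_{\bc}^p\,d\theta \;\leq\; 2^{p/2}\,\|f\|_{H^p_{\bc}}^p.
\]
Taking the supremum in $r$ shows $f^\pm \in H^p(D)$ as functions (in the sense of the $H^p$ size condition). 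Combining this with the holomorphicity of $(f^+)^*$ and $f^-$, and using that $|f^+| = |(f^+)^*|$ pointwise, yields $(f^+)^*, f^- \in H^p(D)$.

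For the converse direction, suppose $(f^+)^*, f^- \in H^p(D)$. Then $\dbar f = 0$ by Remark \ref{minusfunctionisholo}. For the size condition, I would apply the upper estimate pointwise, raise to the $p$-th power, and use the elementary inequality $(a+b)^p \leq C_p(a^p+b^p)$ (valid for all $p>0$ with an appropriate constant $C_p$) to obtain
\[
\int_0^{2\pi}\|f(re^{i\theta})\|_{\bc}^p\,d\theta \;\leq\; C_p \left(\int_0^{2\pi}|f^+(re^{i\theta})|^p\,d\theta + \int_0^{2\pi}|f^-(re^{i\theta})|^p\,d\theta\right).
\]
Again using $|f^+| = |(f^+)^*|$, the right-hand side is bounded by $C_p\bigl(\|(f^+)^*\|_{H^p}^p + \|f^-\|_{H^p}^p\bigr)$, which is finite and independent of $r$. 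Taking the supremum in $r$ gives $\|f\|_{H^p_{\bc}} < \infty$, so $f \in \hpb$.

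There is no substantial obstacle; the argument is essentially a careful bookkeeping exercise. The only point requiring attention is the conjugation mismatch between the holomorphic piece $(f^+)^*$ and the pointwise modulus $|f^+|$ appearing in the norm estimate, which is resolved by the equality $|f^+| = |(f^+)^*|$. The restriction on $p$ does not play a role: the two-sided estimate of Remark \ref{basicestimate} is valid for every $p>0$, and the inequality $(a+b)^p \leq C_p(a^p+b^p)$ handles both the sub- and super-additive regimes with an appropriate $C_p$.
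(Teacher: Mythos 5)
Your proposal is correct and follows essentially the same route as the paper's own proof: both directions use Remark \ref{minusfunctionisholo} to translate $\dbar f = 0$ into holomorphicity of $(f^+)^*$ and $f^-$, and the two-sided estimate of Remark \ref{basicestimate} together with $|f^+| = |(f^+)^*|$ and the inequality $(a+b)^p \leq C_p(a^p + b^p)$ to pass between the $H^p_{\bc}$ norm and the component $H^p$ norms. No gaps.
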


\begin{proof}
    By Proposition \ref{everybchasplusandminus} and Remark \ref{minusfunctionisholo}, for every $f: D \to \bc$ there exists $f^+:D \to \C$ and $f^-: D \to \C$ such that $f = p^+ f^+ + p^- f^-$ and 
    \[
        \dbar f= 0
    \]
    if and only if 
    \[
        \frac{\p f^+}{\p z} = 0 = \frac{\p f^-}{\p z^*}.
    \]

    Suppose $f \in \hpb$. Then $f = p^+ f^+ + p^- f^-$ where $(f^+)^*, f^- \in Hol(D)$. Since 
    \[
        \frac{1}{\sqrt{2}}|w^\pm|\leq ||w||_{\bc},
    \]
    for all $w \in \bc$, it follows that, for $r \in (0,1)$, we have
    \begin{align*}
        \int_0^{2\pi} |(f^+)^*(re^{i\theta})|^p \,d\theta &= \int_0^{2\pi} |f^+(re^{i\theta})|^p \,d\theta \\
        &\leq \int_0^{2\pi} (\sqrt{2}\,||f(re^{i\theta})||_{\bc})^p \,d\theta \\
        &= 2^{p/2}\int_0^{2\pi} ||f(re^{i\theta})||_{\bc}^p \,d\theta \\
        &\leq 2^{p/2} \sup_{0 < r < 1}\int_0^{2\pi} ||f(re^{i\theta})||_{\bc}^p \,d\theta < \infty.
    \end{align*}
    Hence, 
    \[
        \sup_{0 < r < 1}  \int_0^{2\pi} |(f^+)^*(re^{i\theta})|^p \,d\theta \leq 2^{p/2} \sup_{0 < r < 1}\int_0^{2\pi} ||f(re^{i\theta})||_{\bc}^p \,d\theta < \infty.
    \]
    Similarly, 
    \begin{align*}
    \int_0^{2\pi} |f^-(re^{i\theta})|^p \,d\theta 
        &\leq \int_0^{2\pi} (\sqrt{2}\,||f(re^{i\theta})||_{\bc})^p \,d\theta \\
        &= 2^{p/2}\int_0^{2\pi} ||f(re^{i\theta})||_{\bc}^p \,d\theta \\
        &\leq 2^{p/2} \sup_{0 < r < 1}\int_0^{2\pi} ||f(re^{i\theta})||_{\bc}^p \,d\theta < \infty.
    \end{align*}
    Hence, 
    \[
        \sup_{0 < r < 1}  \int_0^{2\pi} |f^-(re^{i\theta})|^p \,d\theta \leq 2^{p/2} \sup_{0 < r < 1}\int_0^{2\pi} ||f(re^{i\theta})||_{\bc}^p \,d\theta < \infty.
    \]
    Thus, $(f^+)^*,f^- \in H^p(D)$.

    Suppose $f^+:D\to\C$ and $f^-:D\to\C$ are two functions such that $(f^+)^*, f^- \in H^p(D)$ and consider the function $f = p^+ f^+ + p^- f^-$. Now, $f: D \to \bc$. Observe that 
    \begin{align*}
        \dbar f &= \left(p^+ \frac{\p}{\p z} + p^- \frac{\p }{\p z^*}\right)(p^+f^+ + p^- f^-)\\
        &= (p^+)^2 \frac{\p f^+}{\p z} + p^+p^- \frac{\p f^-}{\p z} + p^- p^+ \frac{\p f^+}{\p z^*} + (p^-)^2 \frac{\p f^-}{\p z^*} \\
        &= 0.
    \end{align*}
    So, $f \in Hol(D, \bc)$. Also, since
    \[
    ||w||_{\bc} \leq \frac{1}{\sqrt{2}}(|w^+| + |w^-|),
    \]
    for all $w \in \bc$, it follows, for $r \in (0,1)$, that 
    \begin{align*}
        \int_0^{2\pi} ||f(re^{i\theta})||_{\bc}^p \,d\theta 
        &\leq \int_0^{2\pi} \left( \frac{1}{\sqrt{2}}(|f^+(re^{i\theta})| + |f^-(re^{i\theta})|)\right)^p \,d\theta \\
        &\leq \frac{C_p}{2^{p/2}} \left[ \int_0^{2\pi} |f^+(re^{i\theta})|^p \,d\theta + \int_0^{2\pi} |f^-(re^{i\theta})|^p \,d\theta \right] \\
        &= \frac{C_p}{2^{p/2}} \left[ \int_0^{2\pi} |(f^+)^*(re^{i\theta})|^p \,d\theta + \int_0^{2\pi} |f^-(re^{i\theta})|^p \,d\theta \right]  < \infty,
    \end{align*}
    where $C_p$ is a constant that depends only on $p$. Therefore $f \in \hpb$.

\end{proof}

\subsection{Boundary Behavior}

Using the representation result from the last subsection, we show that functions in $H^p(D,\bc)$ exhibit boundary behavior similar to that of the $\mathbb{C}$-valued Hardy classes. The argument requires a direct appeal to the boundary behavior results for the generalized Hardy classes from \cite{WB}, see Definition \ref{cgenhardydef} and Theorems \ref{thm: 2.10WBextension} and \ref{thm: nonhomogHpbvcon}.

\begin{theorem}\label{bcholobvcon}
    For $0 < p < \infty$, every $f = p^+ f^+ + p^- f^-\in \hpb$ such that $f^+ \in H^p_w(D)$, where $w \in L^q(D)$, and $q >2$ or $1 < q \leq 2$ and $p$ satisfies $p < \frac{q}{2-q}$, has a nontangential limit $f_{nt} \in L^p(\p D, \bc)$, and 
    \[
        \lim_{r \nearrow 1} \int_0^{2\pi} ||f_{nt}(e^{i\theta}) - f(re^{i\theta}) ||^p_{\bc} \, d\theta  = 0. 
    \] 
\end{theorem}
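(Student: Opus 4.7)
The plan is to reduce everything to the scalar boundary-value theorems already established, using the idempotent decomposition $f = p^+ f^+ + p^- f^-$. First, by Theorem \ref{thm: bchardyrep}, $f \in \hpb$ forces $(f^+)^*, f^- \in H^p(D)$, so Theorem \ref{bvcon} supplies a nontangential boundary value $f^-_{nt} \in L^p(\p D)$ with $L^p$-norm convergence of $f^-(re^{i\theta})$ to $f^-_{nt}(e^{i\theta})$ as $r \nearrow 1$. For the other idempotent component, I invoke the added hypothesis $f^+ \in H^p_w(D)$ with $w \in L^q(D)$ and apply Theorem \ref{thm: nonhomogHpbvcon} (in the listed range of $p$ and $q$) to produce a nontangential boundary value $f^+_{nt} \in L^p(\p D)$ with the analogous $L^p$-norm convergence.

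Next, I would set $f_{nt} := p^+ f^+_{nt} + p^- f^-_{nt}$, which lies in $L^p(\p D, \bc)$ by Proposition \ref{propLqiff}. To verify convergence in the bicomplex norm, use the pointwise decomposition
\[
f(re^{i\theta}) - f_{nt}(e^{i\theta}) = p^+ \bigl(f^+(re^{i\theta}) - f^+_{nt}(e^{i\theta})\bigr) + p^- \bigl(f^-(re^{i\theta}) - f^-_{nt}(e^{i\theta})\bigr)
\]
and combine the upper bound on $\|\cdot\|_\bc$ from Remark \ref{basicestimate} with the elementary inequality $(|a|+|b|)^p \leq C_p (|a|^p + |b|^p)$ to majorize $\int_0^{2\pi} \|f(re^{i\theta}) - f_{nt}(e^{i\theta})\|_\bc^p\,d\theta$ by a constant times the sum of two scalar integrals, each of which tends to zero as $r\nearrow 1$ by the previous paragraph.

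I do not anticipate a substantial obstacle. The only minor care needed is the non-subadditivity of $\|\cdot\|_\bc^p$ when $0 < p < 1$, which is absorbed uniformly into the constant $C_p = \max(1, 2^{p-1})$. Overall, the argument mirrors the componentwise bookkeeping already employed in the proofs of Theorem \ref{thm: bchardyrep} and Theorem \ref{lonedistbv}, so the novelty is essentially bookkeeping: the bicomplex result is the direct image of the two scalar results under the idempotent decomposition, with all estimates flowing through Remark \ref{basicestimate} and Proposition \ref{propLqiff}.
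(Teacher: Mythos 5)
Your proposal is correct and follows essentially the same route as the paper's proof: apply Theorem \ref{bvcon} to $f^-$ and Theorem \ref{thm: nonhomogHpbvcon} to $f^+$, set $f_{nt} = p^+ f^+_{nt} + p^- f^-_{nt}$, and pass the $L^p$ convergence through the idempotent decomposition using the upper bound in Remark \ref{basicestimate} and the elementary $p$-th power inequality. No gaps.
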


\begin{proof}
Let $f \in \hpb$. By Theorem \ref{thm: bchardyrep}, $f = p^+f^+ + p^- f^-$, where $(f^+)^*,f^- \in H^p(D)$. Suppose also that $f^+ \in H^p_w(D)$, for some $w \in L^q(D)$, $q>2$ (or if $1 < q \leq 2$, then $p$ satisfies $p < \frac{q}{2-q}$). By Theorems \ref{bvcon} and Theorem \ref{thm: nonhomogHpbvcon}, it follows that $f^+$ and $f^-$ have nontangential boundary values $f^+_{nt}$ and $f^-_{nt}$, respectively, $f^+_{nt}, f^-_{nt} \in L^p(\p D)$, 
\[
\lim_{r \nearrow 1} \int_0^{2\pi} |f^+_{nt}(e^{i\theta}) - f^+(re^{i\theta}) |^p \, d\theta  = 0,
\]
and 
\[
\lim_{r \nearrow 1} \int_0^{2\pi} |f^-_{nt}(e^{i\theta}) - f^-(re^{i\theta}) |^p \, d\theta  = 0.
\]
By linearity, $f_{nt} = p^+ f^+_{nt} + p^- f^-_{nt}$ exists and is in $L^p(\p D)$. Observe that, for each $r \in (0,1)$, 
\begin{align*}
&\int_0^{2\pi} ||f_{nt}(e^{i\theta}) - f(re^{i\theta}) ||^p_{\bc} \, d\theta \\
&= \int_0^{2\pi} ||p^+ f^+_{nt}(e^{i\theta}) + p^- f^-_{nt}(e^{i\theta}) - (p^+ f^+(re^{i\theta}) + p^- f^-(re^{i\theta})) ||^p_{\bc} \, d\theta \\
&= \int_0^{2\pi} ||p^+ (f^+_{nt}(e^{i\theta})-f^+(re^{i\theta})) + p^- (f^-_{nt}(e^{i\theta}) - f^-(re^{i\theta})) ||^p_{\bc} \, d\theta \\
&\leq \int_0^{2\pi} \left(\frac{1}{\sqrt{2}}(|f^+_{nt}(e^{i\theta})-f^+(re^{i\theta})| + |f^-_{nt}(e^{i\theta}) - f^-(re^{i\theta}) |)\right)^p \, d\theta \\
&\leq \frac{C_p}{2^{p/2}}\left(\int_0^{2\pi} |f^+_{nt}(e^{i\theta})-f^+(re^{i\theta})|^p \,d\theta + \int_0^{2\pi} |f^-_{nt}(e^{i\theta}) - f^-(re^{i\theta}) |^p \, d\theta \right), 
\end{align*}
where $C_p$ is a constant that depends only on $p$. Since 
\[
\lim_{r \nearrow 1} \int_0^{2\pi} |f^+_{nt}(e^{i\theta}) - f^+(re^{i\theta}) |^p \, d\theta  = 0
\]
and 
\[
\lim_{r \nearrow 1} \int_0^{2\pi} |f^-_{nt}(e^{i\theta}) - f^-(re^{i\theta}) |^p \, d\theta  = 0,
\]
it follows that 
\[
\lim_{r \nearrow 1}\frac{C_p}{2^{p/2}}\left(\int_0^{2\pi} |f^+_{nt}(e^{i\theta})-f^+(re^{i\theta})|^p \,d\theta + \int_0^{2\pi} |f^-_{nt}(e^{i\theta}) - f^-(re^{i\theta}) |^p \, d\theta \right) = 0.
\]
Therefore, 
\[
\lim_{r \nearrow 1} \int_0^{2\pi} ||f_{nt}(e^{i\theta}) - f(re^{i\theta}) ||^p_{\bc} \, d\theta = 0.
\]

\end{proof}

\section{Generalizations of the Bicomplex Hardy Spaces}\label{generalizedbchardysection}

In this section, we define generalizations of $H^p(D,\bc)$ that mimic the generalized Hardy classes $H^p_w(D)$ of \cite{WB}, see Definition \ref{cgenhardydef}. Specifically, we consider bicomplex-valued functions with $\dbar$-derivative not necessarily identically equal to zero and finite $H^p_\bc$ norm. 

\subsection{Definition}

\begin{deff}
    Let $0 < p < \infty$. We define the bicomplex Hardy classes $\hwp$ to be those functions $f \in \hw$ such that $||f||_{H^p_{\bc}} < \infty$.
\end{deff}

\subsection{Representation}

In this subsection, we show that bicomplex-valued solutions to nonhomogeneous $\dbar$-equations have a ``representation of the second kind,'' as named in \cite{Vek, WB}, similar to the $\mathbb{C}$ case. This shows that solutions to nonhomogeneous $\dbar$-equations have the structure of a sum of a $\bc$-holomorphic function and a well-controlled error term.

\begin{prop}\label{genbcnonhomogrep}
    Every solution $w: D \to \bc$ of 
    \[
        \dbar w = g,
    \]
    with $\dbar w = g \in L^1(D, \bc)$, is representable as 
    \[
        w = \varphi + T_\bc(g),
    \]
    where $\varphi \in \holb$. 
\end{prop}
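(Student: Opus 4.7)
The plan is to reduce the bicomplex $\dbar$-equation to a pair of complex $\tfrac{\p}{\p z^*}$-equations via the idempotent decomposition, apply Theorem \ref{oneonesix} to each complex equation, and reassemble the result. First I would write $w = p^+ w^+ + p^- w^-$ and $g = p^+ g^+ + p^- g^-$ using Proposition \ref{everybchasplusandminus}. Then, using the identity $\dbar = p^+ \tfrac{\p}{\p z} + p^- \tfrac{\p}{\p z^*}$ from Remark \ref{minusfunctionisholo}, together with $(p^+)^2 = p^+$, $(p^-)^2 = p^-$, and $p^+ p^- = 0$, the equation $\dbar w = g$ becomes
\[
 p^+ \frac{\p w^+}{\p z} + p^- \frac{\p w^-}{\p z^*} = p^+ g^+ + p^- g^-.
\]
Uniqueness of the $p^\pm$ decomposition then forces $\tfrac{\p w^+}{\p z} = g^+$ and $\tfrac{\p w^-}{\p z^*} = g^-$.

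Next I would pass to the complex setting. Conjugating the first equation gives $\tfrac{\p (w^+)^*}{\p z^*} = (g^+)^*$, which is a standard nonhomogeneous Cauchy–Riemann equation, and the second is already of that form. Since $g \in L^1(D,\bc)$ implies $g^\pm \in L^1(D)$ (Proposition \ref{propLqiff}), and hence $(g^+)^* \in L^1(D)$ as well, Theorem \ref{oneonesix} applies to each, yielding holomorphic functions $\varphi_1, \varphi_2 \in Hol(D)$ with
\[
 (w^+)^* = \varphi_1 + T((g^+)^*), \qquad w^- = \varphi_2 + T(g^-).
\]
Conjugating the first expression back gives $w^+ = \varphi_1^* + (T((g^+)^*))^*$.

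Finally I would reassemble. Setting $\varphi := p^+ \varphi_1^* + p^- \varphi_2$ and using the factored form $T_\bc(g) = p^+ (T((g^+)^*))^* + p^- T(g^-)$ recorded at the start of the proof of Theorem \ref{bctbehavior}, one obtains
\[
 w = p^+ w^+ + p^- w^- = \bigl(p^+ \varphi_1^* + p^- \varphi_2 \bigr) + \bigl( p^+ (T((g^+)^*))^* + p^- T(g^-) \bigr) = \varphi + T_\bc(g).
\]
It remains to verify that $\varphi \in \holb$: its $+$ and $-$ components are $\varphi^+ = \varphi_1^*$ and $\varphi^- = \varphi_2$, so $(\varphi^+)^* = \varphi_1 \in Hol(D)$ and $\varphi^- = \varphi_2 \in Hol(D)$, which by Remark \ref{minusfunctionisholo} is exactly the condition $\dbar \varphi = 0$.

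There is no real obstacle here beyond careful bookkeeping; the main thing to be careful about is the asymmetric appearance of a complex conjugation in the $+$ component (since $\dbar$ acts as $\tfrac{\p}{\p z}$ on that slot rather than $\tfrac{\p}{\p z^*}$), which forces one to apply Theorem \ref{oneonesix} to $(w^+)^*$ instead of to $w^+$ directly and then to conjugate back. Everything else follows by uniqueness of the idempotent decomposition and the already-established formula relating $T_\bc$ to $T$.
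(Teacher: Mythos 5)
Your proof is correct, but it takes a genuinely different route from the paper's. The paper's argument is a one-liner: since Theorem \ref{Thm: bctoperator} already establishes that $\dbar\, T_\bc(g) = g$ for $g \in L^1(D,\bc)$, one simply computes $\dbar(w - T_\bc(g)) = g - g = 0$ and sets $\varphi := w - T_\bc(g) \in \holb$. You instead work at the level of the idempotent components: you split $\dbar w = g$ into the two scalar equations $\tfrac{\p w^+}{\p z} = g^+$ and $\tfrac{\p w^-}{\p z^*} = g^-$, apply the classical Cauchy--Pompeiu representation (Theorem \ref{oneonesix}) to $(w^+)^*$ and to $w^-$ separately, and reassemble using the identity $T_\bc(g) = p^+ (T((g^+)^*))^* + p^- T(g^-)$. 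Every step checks out, including the care you take with the conjugation in the $+$ slot and the verification that $\varphi = p^+\varphi_1^* + p^-\varphi_2$ satisfies $\dbar\varphi = 0$ via Remark \ref{minusfunctionisholo}. What your version buys is an explicit description of the components of $\varphi$ in terms of the classical complex representation, essentially re-proving the right-inverse property of $T_\bc$ along the way; what the paper's version buys is brevity, since that right-inverse property is already available as a black box. Either is acceptable, but if Theorem \ref{Thm: bctoperator} is in hand, the direct argument is the more economical one.
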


\begin{proof}
Since $\dbar T_\bc (f) = f$, for every $ f\in L^1(D, \bc)$, it follows that, for any solution $w$ of $\dbar w = g$ such that $\dbar w = g\in L^1(D, \bc)$,
\[
    \dbar (w - T_\bc(g)) = g - g = 0.
\]
\end{proof}

Next, we prove the bicomplex analogue of Corollary \ref{genhardyrepcorr}. This provides a representation for functions $H^p_w(D,\bc)$ in terms of $H^p(D,\bc)$ functions. 

\begin{theorem}\label{generalbchardyrep}
    Let $0 < p < \infty$ and $w \in L^q(D,\bc)$, $q>2$. For every $f \in \hw$ with representation 
    \[
        f = \varphi + T_\bc(w),
    \]
    $ f \in \hwp$ if and only if $\varphi \in \hpb$. The result holds when $1 < q \leq 2$, so long as $p$ satisfies $p < \frac{q}{2-q}$. 
\end{theorem}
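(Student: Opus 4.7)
The plan is to follow very closely the structure of Theorem \ref{thm: 2.10WBextension} (and Corollary \ref{genhardyrepcorr}), transferring every step into the bicomplex setting by using the bicomplex norm inequalities from Remark \ref{basicestimate} and the bicomplex integral operator estimates from Theorem \ref{bctbehavior} in place of their complex counterparts.

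First I would set $\varphi := f - T_\bc(w)$ and appeal to Proposition \ref{genbcnonhomogrep} to see that $\varphi \in \holb$, so the only thing to verify in both directions is the finiteness of the bicomplex $H^p$-norm. For the forward direction, assume $f \in \hwp$. I would write, for any $r \in (0,1)$,
\[
\int_0^{2\pi} \|\varphi(re^{i\theta})\|_\bc^p\,d\theta \le C_p\left( \int_0^{2\pi} \|f(re^{i\theta})\|_\bc^p\,d\theta + \int_0^{2\pi} \|T_\bc(w)(re^{i\theta})\|_\bc^p\,d\theta\right),
\]
where $C_p$ is the standard constant (equal to $1$ for $p\ge 1$, $2^{p-1}$ or similar for $0<p<1$) coming from the quasi-triangle inequality for $\|\cdot\|_\bc^p$. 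The backward direction uses the same estimate with the roles of $f$ and $\varphi$ swapped. So the problem reduces to bounding the $H^p_\bc$-norm of $T_\bc(w)$ uniformly in $r$.

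For the case $q > 2$, Theorem \ref{bctbehavior} gives $T_\bc(w) \in C^{0,\alpha}(\overline{D},\bc)$, and hence $\|T_\bc(w)\|_\bc$ is bounded by some constant $M$ on $\overline{D}$. Then
\[
\sup_{0<r<1}\int_0^{2\pi}\|T_\bc(w)(re^{i\theta})\|_\bc^p\,d\theta \le 2\pi M^p < \infty,
\]
which makes the bound above uniform in $r$. For the case $1 < q \le 2$ with $p < \frac{q}{2-q}$, I would pick $\gamma$ with $p < \gamma < \frac{q}{2-q}$ and split each circle integral as in the proof of Theorem \ref{thm: 2.10WBextension}: set $D_{r,*} = \{\theta : \|T_\bc(w)(re^{i\theta})\|_\bc \le 1\}$ and $D^{r,*}$ its complement, bounding the integrand by $1$ on $D_{r,*}$ and by its $\gamma$-th power on $D^{r,*}$. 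Then the circular $L^\gamma$-bound from Theorem \ref{bctbehavior},
\[
\|T_\bc(w)\|_{L^\gamma(\p D(0,r),\bc)} \le C\|w\|_{L^q(D,\bc)},
\]
is uniform in $r$, giving
\[
\sup_{0<r<1}\int_0^{2\pi}\|T_\bc(w)(re^{i\theta})\|_\bc^p\,d\theta \le 2\pi + C^\gamma\|w\|_{L^q(D,\bc)}^\gamma < \infty,
\]
and the proof concludes exactly as before.

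The only mildly delicate point is managing the quasi-triangle constant $C_p$ in the bicomplex $p$-th power (for $0<p<1$) together with the scalar factors from $\|\cdot\|_\bc$ (such factors of $\sqrt 2$ coming from Remark \ref{basicestimate}); these are all absorbed into constants depending only on $p$ and $q$, and no new ideas are required. I do not expect any genuine obstacle: the bicomplex ingredients needed (Proposition \ref{genbcnonhomogrep} and Theorem \ref{bctbehavior}) are already in place, and they provide precise analogues of Theorems \ref{oneonesix}, \ref{Vekonepointtwentysix}, \ref{onefourseven}, and \ref{onepointnineteen}, which is exactly what the complex proof of Theorem \ref{thm: 2.10WBextension} relies on.
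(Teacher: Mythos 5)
Your proof is correct and follows essentially the same route as the paper: decompose via Proposition \ref{genbcnonhomogrep}, reduce both implications to a uniform-in-$r$ bound on $\int_0^{2\pi}\|T_\bc(w)(re^{i\theta})\|_\bc^p\,d\theta$, and obtain that bound from Theorem \ref{bctbehavior} (Hölder continuity when $q>2$, the circle $L^\gamma$ estimate when $1<q\le 2$). Your handling of the case $1<q\le 2$, splitting the circle into $\{\theta:\|T_\bc(w)(re^{i\theta})\|_\bc\le 1\}$ and its complement to pass from the $L^\gamma$ bound to an $L^p$ bound, is actually slightly more careful than the paper's, which applies the $L^\gamma$ estimate with exponent $p$ directly.
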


\begin{proof}
By hypothesis, $\dbar f= w \in L^1(D, \bc)$, so by Proposition \ref{genbcnonhomogrep}, 
\[
        f = \varphi + T_\bc(w).
\]
Suppose that $\varphi \in H^p(D,\bc)$. Then, for $r \in (0,1)$, we have
\begin{align}
 \int_0^{2\pi} ||f(re^{i\theta})||_{\bc}^p \,d\theta 
 &= \int_0^{2\pi} ||\varphi(re^{i\theta}) + T_\bc(w)(re^{i\theta})||_{\bc}^p \,d\theta \nonumber \\ 
 &\leq C_p\left( \int_0^{2\pi} ||\varphi(re^{i\theta})||_{\bc}^p + \int_0^{2\pi} ||T_\bc(w)(re^{i\theta})||_{\bc}^p \,d\theta \right) \nonumber\\
  &\leq C_p\left( ||\varphi||_{H^p_\bc}^p + \int_0^{2\pi} ||T_\bc(w)(re^{i\theta})||_{\bc}^p \,d\theta \right), \label{returnpoitbcvekhardyagain}
\end{align}
where $C_p$ is a constant that depends on only $p$. By Theorem \ref{bctbehavior}, since $ w\in L^q(D,\bc)$, $q>2$, it follows that $T_\bc(w) \in C^{0,\alpha}(\overline{D},\bc)$. Thus, there exists $M>0$ such that 
\[
        ||T_\bc(w)(z)||_{\bc} \leq M,
\]
for every $z \in \overline{D}$. Hence, 
\begin{align*}
C_p\left( ||\varphi||_{H^p_\bc}^p + \int_0^{2\pi} ||T_\bc(w)(re^{i\theta})||_{\bc}^p \,d\theta \right)
&\leq C_p\left( ||\varphi||_{H^p_\bc}^p + 2\pi M \right)< \infty.
\end{align*}
Since there is no dependence on $r$ in the right hand side of the above inequality, we have 
\begin{align*}
||f||_{H^p_\bc}^p &\leq C_p\left( ||\varphi||_{H^p_\bc}^p + 2\pi M \right)< \infty.
\end{align*}
If $1 < q \leq 2$ and $p$ satisfies $p < \frac{q}{2-q}$, then returning to (\ref{returnpoitbcvekhardyagain}) we have
\begin{align*}
C_p\left( ||\varphi||_{H^p_\bc}^p + \int_0^{2\pi} ||T_\bc(w)(re^{i\theta})||_{\bc}^p \,d\theta \right)
&\leq C_p\left( ||\varphi||_{H^p_\bc}^p + C||w||^p_{L^q(D)} \,d\theta \right) < \infty,
\end{align*}
by Theorem \ref{bctbehavior}. Since the right hand side has no dependence on $r$, we have
\[
||f||_{H^p_\bc}^p \leq C_p\left( ||\varphi||_{H^p_\bc}^p + C||w||^p_{L^q(D)} \,d\theta \right) < \infty.
\]
In either case $f \in H^p_{w}(D,\bc)$. 

Now, suppose that $f = \varphi + T_\bc(w) \in H^p_{w}(D,\bc)$. Observe that, for $r \in (0,1)$, we have
\begin{align}
    \int_0^{2\pi} ||\varphi(re^{i\theta})||_\bc^p \,d\theta 
    &= \int_0^{2\pi} ||f(re^{i\theta}) - T_\bc(w)(re^{i\theta})||_\bc^p \,d\theta \nonumber \\
    &\leq C_p \left( \int_0^{2\pi} ||f(re^{i\theta})||_{\bc}^p \,d\theta + \int_0^{2\pi} || T_\bc(w)(re^{i\theta})||_\bc^p \,d\theta\right)  \label{returnpointagainagain}\\
    &\leq C_p\left( ||f||_{H^p_\bc}^p + M 2\pi\right)< \infty,\nonumber
\end{align}
where $C_p$ is a constant that depends on only $p$ and $M$ is a constant that bounds $||T_\bc(w)(z)||_\bc^p$, for all $z \in \overline{D}$. So, 
\[
||\varphi||_{H^p_\bc}^p \leq C_p\left( ||f||_{H^p_\bc}^p + M 2\pi\right)< \infty,
\]
and $\varphi \in H^p(D, \bc)$. If $1 < q \leq 2$ and $p$ satisfies $p < \frac{q}{2-q}$, then returning to (\ref{returnpointagainagain}) and appealing to Theorem \ref{bctbehavior} once more, we have 
\begin{align*}
&C_p \left( \int_0^{2\pi} ||f(re^{i\theta})||_{\bc}^p \,d\theta + \int_0^{2\pi} || T_\bc(w)(re^{i\theta})||_\bc^p \,d\theta\right) \\
&\leq C_p \left( ||f||^p_{H^p_\bc}  + C||w||^p_{L^q(D)}\right)  < \infty.
\end{align*}
Therefore, 
\[
    ||\varphi||^p_{H^p_\bc} \leq C_p \left(  ||f||^p_{H^p_\bc} + C||w||^p_{L^q(D)}\right)  < \infty,
\]
and $\varphi \in H^p(D,\bc)$. 

\end{proof}

\subsection{Boundary Behavior}

With the representation theorem from the last subsection and the previous results about the boundary behavior for the $\bc$-holomorphic Hardy spaces $H^p(D,\bc)$, we are ready to show that the generalized bicomplex Hardy classes $H^p_w(D,\bc)$ have the desirable boundary behavior associated with Hardy classes. 

\begin{theorem}\label{Thm: generalbchardybv}
    For $0 < p < \infty$ and $w \in L^q(D,\bc)$, $q>2$, every $f = \varphi + T_\bc(w)\in \hwp$ such that $\varphi^+ \in H^p_{g}(D)$, where $g \in L^\gamma(D)$, $\gamma>2$ or $1 < \gamma \leq 2$ and $p < \frac{\gamma}{2-\gamma}$, has a nontangential limit $f_{nt} \in L^p(D, \bc)$, and 
    \[
        \lim_{r \nearrow 1} \int_0^{2\pi} ||f_{nt}(e^{i\theta}) - f(re^{i\theta}) ||^p_{\bc} \, d\theta  = 0. 
    \]  
    The result holds when $1 < q \leq 2$, so long as $p$ satisfies $p < \frac{q}{2-q}$.
\end{theorem}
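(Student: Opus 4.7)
The plan is to decompose $f$ via Theorem \ref{generalbchardyrep} and analyze the boundary behavior of each summand using results already in hand. Since $\dbar f = w \in L^q(D,\bc) \subset L^1(D,\bc)$, Theorem \ref{generalbchardyrep} produces the representation $f = \varphi + T_\bc(w)$ with $\varphi \in \hpb$. Writing $\varphi = p^+ \varphi^+ + p^- \varphi^-$ as in Proposition \ref{everybchasplusandminus}, the hypothesis $\varphi^+ \in H^p_g(D)$ with $g \in L^\gamma(D)$ under the stated constraints on $\gamma$ is exactly what is required to apply Theorem \ref{bcholobvcon} to $\varphi$. This delivers a nontangential limit $\varphi_{nt} \in L^p(\p D, \bc)$ together with
\[
    \lim_{r \nearrow 1} \int_0^{2\pi} ||\varphi_{nt}(e^{i\theta}) - \varphi(re^{i\theta})||_{\bc}^p \, d\theta = 0.
\]

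Next, the plan is to control $T_\bc(w)$ using Theorem \ref{bctbehavior}. When $q > 2$, $T_\bc(w) \in C^{0,\alpha}(\overline{D}, \bc)$, so $T_\bc(w)$ extends continuously to $\overline{D}$ and $T_\bc(w)(re^{i\theta}) \to T_\bc(w)(e^{i\theta})$ uniformly in $\theta$ as $r \nearrow 1$; this immediately gives $L^p_{\bc}$-convergence on the finite-measure space $\p D$ for any $p > 0$. When $1 < q \leq 2$ and $p < \frac{q}{2-q}$, select $\gamma$ with $\max\{p,1\} \leq \gamma < \frac{q}{2-q}$ (such a $\gamma$ exists since $q>1$ forces $\frac{q}{2-q}>1$); Theorem \ref{bctbehavior} provides $L^\gamma_{\bc}$-convergence of $T_\bc(w)(re^{i\theta})$ to its boundary value, and the containment $L^\gamma(\p D,\bc) \subset L^p(\p D,\bc)$ (equivalently, one application of H\"older's inequality on $\p D$) upgrades this to $L^p_{\bc}$-convergence.

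Finally, set $f_{nt} := \varphi_{nt} + T_\bc(w)|_{\p D}$, which lies in $L^p(\p D, \bc)$ as a sum of two such functions. The quasi-subadditivity $||a+b||_{\bc}^p \leq C_p (||a||_{\bc}^p + ||b||_{\bc}^p)$ (with $C_p=1$ for $p\leq 1$ and $C_p=2^{p-1}$ for $p>1$) then yields
\[
    \int_0^{2\pi} ||f_{nt}(e^{i\theta}) - f(re^{i\theta})||_{\bc}^p \, d\theta \leq C_p \left( I_\varphi(r) + I_T(r) \right),
\]
where $I_\varphi(r)$ and $I_T(r)$ are the corresponding integrals for $\varphi$ and $T_\bc(w)$; both tend to $0$ by the previous two paragraphs, which completes the argument. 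The only delicate point is the parameter bookkeeping between the $q$-constraint on $w$ (which governs $T_\bc(w)$) and the $\gamma$-constraint on $g$ (which governs $\varphi^+$): because these two conditions act on independent pieces of the decomposition, they can be imposed simultaneously without conflict, and the proof reduces cleanly to the combined application of Theorems \ref{bcholobvcon}, \ref{bctbehavior}, and \ref{generalbchardyrep}.
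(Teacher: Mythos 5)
Your proposal is correct and follows essentially the same route as the paper: decompose $f = \varphi + T_\bc(w)$ via Theorem \ref{generalbchardyrep}, apply Theorem \ref{bcholobvcon} to $\varphi$ under the hypothesis on $\varphi^+$, control $T_\bc(w)$ with Theorem \ref{bctbehavior}, and combine via quasi-subadditivity. Your explicit handling of the passage from $L^\gamma$- to $L^p$-convergence for $T_\bc(w)$ (uniform convergence when $q>2$, H\"older on the finite-measure circle when $1<q\leq 2$) is in fact slightly more careful than the paper's direct citation of Theorem \ref{bctbehavior}.
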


\begin{proof}
By Theorem \ref{generalbchardyrep}, if $f \in H^p_{w}(D,\bc)$ with $w \in L^q(D)$, where $q$ and $p$ satisfy the hypothesis, then $f = \varphi + T_\bc(w)$ and $\varphi \in H^p(D,\bc)$. If we assume that $\varphi^+ \in H^p_g(D)$, for some $g \in L^\gamma(D)$, $\gamma>2$ or $1 < \gamma \leq 2$ and $p < \frac{\gamma}{2-\gamma}$, then by Theorem \ref{bcholobvcon}, $\varphi$ has a nontangential boundary value $\varphi_{nt} \in L^p(\p D,\bc)$ and 
\[
        \lim_{r \nearrow 1} \int_0^{2\pi} ||\varphi_{nt}(e^{i\theta}) - \varphi(re^{i\theta}) ||^p_{\bc} \, d\theta  = 0. 
    \]
By Theorem \ref{bctbehavior}, if $p$ and $q$ satisfy the hypothesis, then $T_\bc(w) \in L^p(\p D,\bc)$ and 
\[
\lim_{r \nearrow 1} \int_0^{2\pi} ||T_\bc(w)(e^{i\theta}) - T_\bc(w)(re^{i\theta}) ||^p_{\bc} \, d\theta  = 0.
\]
So, $f_{nt}$ exists and is in $L^p(\p D,\bc)$. Also, for $r \in (0,1)$, we have
\begin{align*}
    &\int_0^{2\pi} ||f_{nt}(e^{i\theta}) - f(re^{i\theta}) ||^p_{\bc} \, d\theta \\
    &= \int_0^{2\pi} ||\varphi_{nt}(e^{i\theta}) + T_\bc(w)(e^{i\theta}) - (\varphi(re^{i\theta}) + T_\bc(w)(re^{i\theta})) ||^p_{\bc} \, d\theta \\
    &\leq C_p \left( \int_0^{2\pi} ||\varphi_{nt}(e^{i\theta}) - \varphi(re^{i\theta})||_\bc^p\,d\theta  + \int_0^{2\pi} || T_\bc(w)(e^{i\theta}) - T_\bc(w)(re^{i\theta}) ||^p_{\bc} \, d\theta\right) .
\end{align*}
Therefore, 
\begin{align*}
& \lim_{r \nearrow 1} \int_0^{2\pi} ||f_{nt}(e^{i\theta}) - f(re^{i\theta}) ||^p_{\bc} \, d\theta \\
&\leq \lim_{r \nearrow 1} C_p \left( \int_0^{2\pi} ||\varphi_{nt}(e^{i\theta}) - \varphi(re^{i\theta})||_\bc^p\,d\theta  + \int_0^{2\pi} || T_\bc(w)(e^{i\theta}) - T_\bc(w)(re^{i\theta}) ||^p_{\bc} \, d\theta\right)\\
&\leq  C_p \left( \lim_{r \nearrow 1}\int_0^{2\pi} ||\varphi_{nt}(e^{i\theta}) - \varphi(re^{i\theta})||_\bc^p\,d\theta  + \lim_{r \nearrow 1}\int_0^{2\pi} || T_\bc(w)(e^{i\theta}) - T_\bc(w)(re^{i\theta}) ||^p_{\bc} \, d\theta\right)\\
&= 0.
\end{align*}

\end{proof}

\section{Atomic Decomposition}\label{atomicdecompsection}

In this section, we show that the bicomplex Hardy classes have boundary values in the sense of distributions and distributional boundary values of functions in the bicomplex Hardy classes, for small $p$, have an atomic decomposition. From this atomic decomposition, we show that the classic Hilbert transform on the circle is a continuous operator on this class of boundary distributions. This extends the results in the $\mathbb{C}$-holomorphic case from \cite{GHJH2} and the $\mathbb{C}$-valued but not necessarily holomorphic  case from \cite{WB}.

    By the linearity of the integral, $w = p^+ w^+ + p^- w^- : D \to \bc$ has a boundary value in the sense of distributions whenever the functions $w^+,w^- D: \to \mathbb{C}$ have boundary values in the sense of distributions. We begin by showing that functions in $\hpb$ have boundary values in the sense of distributions.

\begin{prop}\label{Prop: bcholodistbv}
    For $0 < p \leq 1$, every $w = p^+ w^+ + p^- w^-\in \hpb$ has a distributional boundary value $w_b$ and 
    \[
        w_b = p^+ w^+_b + p^- w^-_b.
    \]  
\end{prop}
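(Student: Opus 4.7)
The plan is to reduce the bicomplex statement to the scalar case by passing through the idempotent decomposition and then invoking the known distributional boundary value results for the classical holomorphic Hardy spaces. Concretely, given $w = p^+ w^+ + p^- w^- \in \hpb$, Theorem \ref{thm: bchardyrep} immediately tells us that $(w^+)^*, w^- \in H^p(D)$. I will then call on Theorem \ref{GHJH23point1corr} to deduce that both $(w^+)^*$ and $w^-$ satisfy condition (3) of Theorem \ref{GHJH23point1}, hence condition (1): both have distributional boundary values $((w^+)^*)_b$ and $w^-_b$ in $\mathcal{D}'(\p D)$.

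The next step is to promote the existence of $((w^+)^*)_b$ to the existence of $w^+_b$. For any test function $\varphi \in C^\infty(\p D)$, the function $\varphi^*$ also lies in $C^\infty(\p D)$, and a direct computation gives
\[
    \int_0^{2\pi} w^+(re^{i\theta})\,\varphi(e^{i\theta})\,d\theta = \left( \int_0^{2\pi} (w^+)^*(re^{i\theta})\,\varphi^*(e^{i\theta})\,d\theta \right)^{\!*}.
\]
Since the right-hand side converges as $r \nearrow 1$ by the distributional boundary value of $(w^+)^*$, the left-hand side converges as well, so $w^+_b$ exists in $\mathcal{D}'(\p D)$ and is given by $\langle w^+_b, \varphi\rangle = \langle ((w^+)^*)_b, \varphi^*\rangle^*$.

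Finally, linearity of the integral yields
\[
    \int_0^{2\pi} w(re^{i\theta})\,\varphi(e^{i\theta})\,d\theta = p^+ \int_0^{2\pi} w^+(re^{i\theta})\,\varphi(e^{i\theta})\,d\theta + p^- \int_0^{2\pi} w^-(re^{i\theta})\,\varphi(e^{i\theta})\,d\theta,
\]
and since both scalar limits exist as $r \nearrow 1$, the left-hand side converges to $p^+ \langle w^+_b, \varphi\rangle + p^- \langle w^-_b, \varphi\rangle$, which by Definition \ref{bcbvcircle} gives the existence of $w_b$ together with the identity $w_b = p^+ w^+_b + p^- w^-_b$.

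The proof is essentially a bookkeeping argument rather than a genuinely hard step; the only mildly delicate point is transferring the distributional boundary value from $(w^+)^*$ to $w^+$, which requires observing that $C^\infty(\p D)$ is closed under complex conjugation so that testing against $\varphi$ can be replaced by testing against $\varphi^*$. Everything else follows from Theorems \ref{thm: bchardyrep}, \ref{GHJH23point1}, and \ref{GHJH23point1corr} together with linearity.
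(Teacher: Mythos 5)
Your proposal is correct and follows essentially the same route as the paper's own proof: idempotent decomposition via Theorem \ref{thm: bchardyrep}, existence of $((w^+)^*)_b$ and $w^-_b$ from Theorems \ref{GHJH23point1} and \ref{GHJH23point1corr}, transfer to $w^+_b$ by conjugating the test function, and linearity to conclude. No gaps; your handling of the conjugation step is, if anything, stated slightly more explicitly than in the paper.
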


\begin{proof}
    By Theorem \ref{thm: bchardyrep}, if $w \in \hpb$, then $(w^+)^*, w^- \in H^p(D)$. By Theorem \ref{GHJH23point1} and \ref{GHJH23point1corr}, since $(w^+)^*, w^- \in H^p(D)$, it follows that $(w^+)^*_b, w^-_b$, the distributional boundary values of $(w^+)^*$ and $w^-$, respectively, exist. 
    
    Since $(w^+)^*_b$ exists, it follows, by definition, that the limit
    \[
        \lim_{r\nearrow 1} \int_0^{2\pi} (w^+)^*(re^{i\theta}) \,\varphi(e^{i\theta})\,d\theta
    \]
    exists, for every $\varphi \in C^\infty(\p D)$. Since, for every $r \in (0,1)$ and $\varphi \in C^\infty(\p D)$, 
    \[
        \int_0^{2\pi} (w^+)^*(re^{i\theta}) \,\varphi(e^{i\theta})\,d\theta 
        = \left( \int_0^{2\pi} w^+(re^{i\theta}) \,\varphi^*(e^{i\theta})\,d\theta \right)^*,
    \]
    it follows that the limit
    \[
        \left( \lim_{r\nearrow 1}\int_0^{2\pi} w^+(re^{i\theta}) \,\varphi^*(e^{i\theta})\,d\theta \right)^* = 
       \lim_{r\nearrow 1} \int_0^{2\pi} (w^+)^*(re^{i\theta}) \,\varphi(e^{i\theta})\,d\theta 
    \]
    exists. Hence, $w^+_b$ exists.

    Now, observe that for $\varphi \in C^{\infty}(\p D)$ and $r \in (0,1)$, we have
    \begin{align*}
        \int_0^{2\pi} w(re^{i\theta}) \,\varphi(e^{i\theta})\,d\theta
        &=  p^+\int_0^{2\pi} w^+(re^{i\theta}) \,\varphi(e^{i\theta})\,d\theta + p^-\int_0^{2\pi} w^-(re^{i\theta}) \,\varphi(e^{i\theta})\,d\theta.
    \end{align*}
    Since the limits
    \[
        \lim_{r\nearrow 1} \int_0^{2\pi} w^+(re^{i\theta}) \,\varphi(e^{i\theta})\,d\theta 
    \]
    and
    \[
\lim_{r\nearrow 1} \int_0^{2\pi} w^-(re^{i\theta}) \,\varphi(e^{i\theta})\,d\theta 
    \]
    exist, it follows that the limit
    \begin{align*}
       & \lim_{r \nearrow 1}\int_0^{2\pi} w(re^{i\theta}) \,\varphi(e^{i\theta})\,d\theta \\
        &=  p^+\lim_{r \nearrow 1}\int_0^{2\pi} w^+(re^{i\theta}) \,\varphi(e^{i\theta})\,d\theta + p^-\lim_{r \nearrow 1}\int_0^{2\pi} w^-(re^{i\theta}) \,\varphi(e^{i\theta})\,d\theta
    \end{align*}
    exists. Thus, $w_b$ exists, and 
    \[
        w_b = p^+ w^+_b + p^- w^-_b.
    \]
\end{proof}

We are now ready to show the variant of the atomic decomposition result in this context. 

\begin{theorem}\label{Thm: bcholoatomicdecomp}
    For $0 < p \leq 1$ and every $w \in \hpb$, there exist sequences $\{c_n^+\}, \{c_n^-\} \in \ell^p(\C)$ and collections of $p$-atoms $\{a_n^+\}$ and $\{a_n^-\}$ such that 
    \[
        w_b = p^+ \left( \sum_{n=1}^\infty c^+_n a^+_n\right)^* + p^- \sum_{k=1}^\infty c^-_k a^-_k.
    \]
\end{theorem}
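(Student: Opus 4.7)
The plan is to combine the representation theorem for bicomplex Hardy spaces (Theorem \ref{thm: bchardyrep}) with the proposition establishing existence of distributional boundary values (Proposition \ref{Prop: bcholodistbv}) and the atomic decomposition for the classical holomorphic Hardy spaces (Theorem \ref{GHJH2twopointtwo}). First, given $w \in \hpb$, Theorem \ref{thm: bchardyrep} yields the idempotent decomposition $w = p^+ w^+ + p^- w^-$ with $(w^+)^*, w^- \in H^p(D)$. Second, Proposition \ref{Prop: bcholodistbv} furnishes the distributional boundary value $w_b$ and the identity $w_b = p^+ w^+_b + p^- w^-_b$; in the proof of that proposition, it was also observed that $w^+_b = \bigl((w^+)^*_b\bigr)^*$, which is the crucial link allowing us to pass from decompositions of $(w^+)^*_b$ back to $w^+_b$.

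Next, I would apply Theorem \ref{GHJH2twopointtwo} separately to the two functions $(w^+)^* \in H^p(D)$ and $w^- \in H^p(D)$. This produces sequences $\{c_n^+\}, \{c_n^-\} \in \ell^p(\C)$ and collections of $p$-atoms $\{a_n^+\}, \{a_n^-\}$ with
\[
(w^+)^*_b = \sum_{n=1}^\infty c_n^+ a_n^+, \qquad w^-_b = \sum_{k=1}^\infty c_k^- a_k^-,
\]
where the convergence is in $\mathcal{D}'(\p D)$. Taking complex conjugates in the first identity gives $w^+_b = \bigl(\sum_{n=1}^\infty c_n^+ a_n^+\bigr)^*$. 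Substituting these into $w_b = p^+ w^+_b + p^- w^-_b$ from step one yields the desired representation
\[
w_b = p^+\left(\sum_{n=1}^\infty c_n^+ a_n^+\right)^* + p^- \sum_{k=1}^\infty c_k^- a_k^-.
\]

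This argument is essentially a synthesis of previously established ingredients, so there is no single hard step. The only subtlety worth double-checking is the interaction between complex conjugation and the duality pairing: one must verify that conjugation commutes with the distributional limit defining $w^+_b$, so that $w^+_b = \bigl((w^+)^*_b\bigr)^*$ as distributions, which amounts to pairing against $\varphi \in C^\infty(\p D)$ and its conjugate $\varphi^*$. Since this identity was already noted inside the proof of Proposition \ref{Prop: bcholodistbv}, no new work is required, and the conclusion follows immediately.
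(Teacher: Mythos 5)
Your proposal is correct and follows essentially the same route as the paper's own proof: apply Theorem \ref{thm: bchardyrep} to get $(w^+)^*, w^- \in H^p(D)$, invoke Theorem \ref{GHJH2twopointtwo} for the two atomic decompositions, and use Proposition \ref{Prop: bcholodistbv} together with the identity $w^+_b = \bigl((w^+)^*_b\bigr)^*$ to assemble $w_b = p^+ w^+_b + p^- w^-_b$. Your remark about conjugation commuting with the distributional limit is exactly the point handled inside the proof of Proposition \ref{Prop: bcholodistbv}, so nothing further is needed.
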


\begin{proof}
By Theorem \ref{thm: bchardyrep}, $w = p^+w^+ +p^- w^- \in \hpb$ if and only if $(w^+)^*, w^- \in H^p(D)$. By Theorem \ref{GHJH2twopointtwo}, since $(w^+)^*,w^- \in H^p(D)$, it follows that there exists $\{c_n^+\}, \{c_n^-\} \in \ell^p(\C)$ and collections of $p$-atoms $\{a_n^+\}$ and $\{a_n^-\}$
such that 
\[
    (w^+)^*_b = \sum_{n=1}^\infty c_n^+ a_n^+
\]
and 
\[
    w^-_b = \sum_{n=1}^\infty c_n^- a_n^-.
\]
By Proposition \ref{Prop: bcholodistbv}, $w_b$ exists and 
\[
    w_b = p^+ w^+_b + p^- w^-_b = p^+((w^+)^*_b)^* + p^- w^-.
\]
Thus, 
\[
    w_b = p^+ \left( \sum_{n=1}^\infty c_n^+a_n^+ \right)^* + p^- \sum_{n=1}^\infty c_n^-a_n^-.
\]

\end{proof}

In pursuit of showing that the Hilbert transform is a continuous operator on the set of boundary values in the sense of distributions of functions in $\hpb$, for small $p$, we require a norm for this class of objects. We define the one that we consider. 

\begin{deff}
    For $0 < p \leq 1$, we define $(\hpb)_b$ to be the collection of distributional boundary values of functions in $\hpb$. 
\end{deff}

\begin{deff}\label{bcatomicnorm}
    We define the $\bc$-atomic norm $||\cdot||_{\bc,at}$ on $(\hpb)_b$ by 
    \[
        ||w_b||_{\bc,at} := ||w^+_b||_{at} + ||w^-_b||_{at},
    \]
    where $||\cdot||_{at}$ is the atomic norm defined on the distributional boundary values of functions in the holomorphic Hardy spaces $H^p(D)$ defined in Definition \ref{atomicnorm}.
\end{deff}

\begin{theorem}\label{bcholohilbertcont}
    For $0 < p \leq 1$, the Hilbert transform is a continuous operator on $(\hpb)_b$ with the norm $||\cdot||_{\bc,at}$. 
\end{theorem}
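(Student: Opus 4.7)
The plan is to reduce the bicomplex Hilbert transform continuity to the previously established $\mathbb{C}$-valued result (Theorem \ref{hilbertholohardy}) through the idempotent decomposition of $\bc$ together with the component-wise atomic decomposition produced in Theorem \ref{Thm: bcholoatomicdecomp}. The key observation is that the Hilbert transform kernel $1/(2\tan(t/2))$ is real-valued, so $H$ commutes with complex conjugation both on $L^1(\p D)$ and, by linearity, on distributions admitting an atomic decomposition.

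First, I would fix $w \in \hpb$ and write $w = p^+ w^+ + p^- w^-$ with $(w^+)^*, w^- \in H^p(D)$ via Theorem \ref{thm: bchardyrep}; then by Proposition \ref{Prop: bcholodistbv} and Theorem \ref{GHJH2twopointtwo}, both $(w^+)^*_b$ and $w^-_b$ lie in $(H^p(D))_b$, with well-defined atomic norms. Define the Hilbert transform of $w_b$ by acting on the idempotent representation componentwise:
\[
H(w_b) := p^+\bigl(H((w^+)^*_b)\bigr)^* + p^- H(w^-_b).
\]
To verify $H(w_b)\in (\hpb)_b$, let $\Phi^+,\Phi^-\in H^p(D)$ be the functions whose distributional boundary values are $H((w^+)^*_b)$ and $H(w^-_b)$ respectively, and set $F:=p^+(\Phi^+)^*+p^-\Phi^-$. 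Then $(F^+)^*=\Phi^+\in H^p(D)$ and $F^-=\Phi^-\in H^p(D)$, so Theorem \ref{thm: bchardyrep} gives $F\in\hpb$, and its boundary distribution is exactly $H(w_b)$.

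For the norm estimate, I would use that complex conjugation sends any atomic decomposition $\sum c_n a_n$ to $\sum c_n^*\, a_n^*$ while preserving the $p$-atom conditions (support, size bound, and cancellation), so $\|T^*\|_{at}=\|T\|_{at}$ for every distribution $T$ with atomic decomposition. Combining this with Theorem \ref{hilbertholohardy},
\[
\|H(w_b)\|_{\bc,at}=\|(H((w^+)^*_b))^*\|_{at}+\|H(w^-_b)\|_{at}\leq C\bigl(\|(w^+)^*_b\|_{at}+\|w^-_b\|_{at}\bigr)=C\|w_b\|_{\bc,at},
\]
where the last equality uses $\|w^+_b\|_{at}=\|(w^+)^*_b\|_{at}$. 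The main obstacle --- really the only piece requiring care --- is the bookkeeping around complex conjugation of distributions and atomic decompositions, so that $H$ is both well-defined on $(\hpb)_b$ with range in $(\hpb)_b$ and the $\bc$-atomic norm of $H(w_b)$ reduces to the $\mathbb{C}$-valued atomic norms governed by Theorem \ref{hilbertholohardy}.
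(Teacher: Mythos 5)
Your proposal follows essentially the same route as the paper: decompose $w_b$ via the idempotents using Theorem \ref{thm: bchardyrep} and Proposition \ref{Prop: bcholodistbv}, apply the complex-valued result (Theorem \ref{hilbertholohardy}) to each of $(w^+)^*_b$ and $w^-_b$, and use the fact that the Hilbert transform commutes with complex conjugation together with $\|(w^+)^*_b\|_{at}=\|w^+_b\|_{at}$ to assemble the estimate. The one substantive divergence is the norm placed on the output: you bound $\|H(w_b)\|_{\bc,at}$, which requires reading Theorem \ref{hilbertholohardy} as an atomic-norm-to-atomic-norm bound $\|H(f)\|_{at}\leq C\|f\|_{at}$ (and hence requires your verification that $H$ maps $(\hpb)_b$ back into $(\hpb)_b$), whereas the paper's proof bounds $\|H(w_b)\|_{L^p(\p D,\bc)}$, invoking Theorem \ref{hilbertholohardy} in the mixed form $\|H(f)\|_{L^p(\p D)}\leq C\|f\|_{at}$. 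Your reading is the more literal one of the cited theorem, and your version proves a formally stronger (endomorphism-type) continuity statement at the cost of needing the range verification; the paper's version avoids that step entirely. Your extra bookkeeping is sound: conjugation sends a $p$-atom to a $p$-atom (the support, size, and moment conditions are all preserved since $\int a^*(\theta)\theta^k\,d\theta=(\int a(\theta)\theta^k\,d\theta)^*=0$), so $\|T^*\|_{at}=\|T\|_{at}$, and the reality of the kernel $1/(2\tan(t/2))$ gives $H(f^*)=(H(f))^*$ — both points the paper uses implicitly without justification.
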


\begin{proof}
By Theorem \ref{thm: bchardyrep}, if $w \in \hpb$, $0 < p \leq 1$, then $w = p^+ w^+ + p^- w^-$ and $(w^+)^*, w^- \in H^p(D)$. By Theorem \ref{hilbertholohardy}, the Hilbert transform is a continuous operator on the distributional boundary values of the functions in $H^p(D)$, $0 < p \leq 1$, with the atomic norm i.e., for every $f \in (H^p(D))_b$, there exists a constant $C>0$ such that 
\[
    ||H(f)||_{L^p(\p D)} \leq C ||f||_{at}. 
\]
Since $(w^+)^*,w^- \in H^p(D)$, it follows that $(w^+)_b, w^-_b$ exist and there exist positive constants $C_1$ and $C_2$ such that 
\[
||H((w^+)^*_b)||_{L^p(\p D)} \leq C_1 ||(w^+)^*_b||_{at}
\]
and 
\[
||H(w^-_b)||_{L^p(\p D)} \leq C_2 ||w^-_b||_{at}.
\]
Thus, 
\begin{align*}
    ||H(w_b)||_{L^p(\p D, \bc)} 
    &\leq \frac{1}{\sqrt{2}}\left(  ||H(w^+)||_{L^p(\p D)} + ||H(w^-)||_{L^p(\p D)} \right) .
\end{align*}
Since
\[
    ||H(w^+)||_{L^p(\p D)} = ||(H((w^+)^*))^*||_{L^p(\p D)} = ||H((w^+)^*)||_{L^p(\p D)} 
\]
and
\[
        ||(w^+)^*_b||_{at} = ||w^+_b||_{at},
\]
it follows that 
\begin{align*}
\frac{1}{\sqrt{2}}\left(  ||H(w^+)||_{L^p(\p D)} + ||H(w^-)||_{L^p(\p D)} \right) 
&= \frac{1}{\sqrt{2}}\left(  ||H((w^+)^*)||_{L^p(\p D)} + ||H(w^-)||_{L^p(\p D)} \right) \\
&\leq \frac{1}{\sqrt{2}}\left(  C_1 ||(w^+)^*_b||_{at} + C_2 ||w^-_b||_{at} \right) \\
&= \frac{1}{\sqrt{2}}\left(  C_1 ||w^+_b||_{at} + C_2 ||w^-_b||_{at} \right) \\
& < C \left( ||w^+_b||_{at} +  ||w^-_b||_{at}\right) \\
&= C ||w_b||_{\bc,at},
\end{align*}
where $C$ is a constant that is larger than $2^{-1/2}C_1$ and $2^{-1/2}C_2$.

\end{proof}

We are able to immediately extend from the holomorphic to the generalized setting as in the complex setting of \cite{WB}. 

\begin{theorem}\label{bcgeneralhardyatomicdecomp}
Let $0 < p \leq 1$ and $ w \in L^q(D,\bc)$, $q>1$. For every $f \in H^p_w(D, \bc)$, there exist sequences $\{c_n^+\}, \{c_n^-\} \in \ell^p(\C)$ and collections of $p$-atoms $\{a_n^+\}$ and $\{a_n^-\}$ such that 
    \[
        f_b = p^+ \left( \sum_{n=1}^\infty c^+_n a^+_n\right)^* + p^- \sum_{k=1}^\infty c^-_k a^-_k + T_\bc(w)_b,
    \]
    and $T_\bc(w)_b \in L^\gamma(\p D,\bc)$, $1 < \gamma< \frac{q}{2-q}$. If $q>2$, then $T_\bc(w)_b \in C^{0,\alpha}(\p D,\bc)$, $\alpha = \frac{q-2}{q}$. 
\end{theorem}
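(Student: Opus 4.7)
The plan is to leverage the second-kind representation of Theorem \ref{generalbchardyrep} to reduce the claim to the atomic decomposition already established for $\bc$-holomorphic Hardy spaces in Theorem \ref{Thm: bcholoatomicdecomp}, while handling the remainder $T_\bc(w)$ via its boundary regularity from Theorem \ref{bctbehavior}. First I would verify that the hypotheses of Theorem \ref{generalbchardyrep} are satisfied: for $0 < p \leq 1$ and $q > 1$ we have $p \leq 1 < \frac{q}{2-q}$ whenever $1 < q \leq 2$, while no restriction is needed when $q > 2$. Hence $f$ admits the representation $f = \varphi + T_\bc(w)$ with $\varphi \in \hpb$.

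Second, I would apply Theorem \ref{Thm: bcholoatomicdecomp} directly to the $\bc$-holomorphic piece $\varphi$, producing sequences $\{c_n^\pm\} \in \ell^p(\C)$ and collections of $p$-atoms $\{a_n^\pm\}$ with
\[
\varphi_b = p^+ \left(\sum_{n=1}^\infty c_n^+ a_n^+\right)^* + p^- \sum_{n=1}^\infty c_n^- a_n^-.
\]

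Third, I would show $T_\bc(w)_b$ exists and identify the stated regularity by splitting into two cases. If $q > 2$, Theorem \ref{bctbehavior} yields $T_\bc(w) \in C^{0,\alpha}(\overline{D},\bc)$ with $\alpha = \frac{q-2}{q}$; in particular the boundary trace is Hölder continuous on $\p D$, and both the $L^1(\p D,\bc)$ boundary value and the $L^1$-norm convergence hypothesis of Theorem \ref{lonedistbv} follow from uniform continuity. If $1 < q \leq 2$, then Theorem \ref{bctbehavior} provides an $L^\gamma(\p D(0,r),\bc)$ trace together with $L^\gamma$-norm convergence for any $1 \leq \gamma < \frac{q}{2-q}$; choosing $\gamma = 1$ (legal because $q > 1$ forces $\frac{q}{2-q} > 1$) supplies the $L^1$ ingredients needed for Theorem \ref{lonedistbv}. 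In either case Theorem \ref{lonedistbv} gives $T_\bc(w)_b = T_\bc(w)|_{\p D}$ as distributions, with $T_\bc(w)_b \in L^\gamma(\p D,\bc)$ for $1 < \gamma < \frac{q}{2-q}$ when $1 < q \leq 2$ and $T_\bc(w)_b \in C^{0,\alpha}(\p D,\bc)$ when $q > 2$.

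Finally, since the distributional boundary value is linear in its argument, $f_b = \varphi_b + T_\bc(w)_b$, which yields the claimed decomposition. The only genuine obstacle is the borderline range $1 < q \leq 2$, where existence of $T_\bc(w)_b$ as a distribution is not immediate from $C^{0,\alpha}$ regularity; here one must use the sharp bound $\gamma < \frac{q}{2-q}$ from Theorem \ref{bctbehavior} to secure $L^1$-norm convergence to an $L^1(\p D,\bc)$ trace, after which Theorem \ref{lonedistbv} closes the argument. Everything else is a direct transcription of earlier results combined with the linearity $f \mapsto f_b$.
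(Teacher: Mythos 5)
Your proposal is correct and follows essentially the same route as the paper: apply Theorem \ref{generalbchardyrep} to write $f = \varphi + T_\bc(w)$ with $\varphi \in \hpb$, invoke Theorem \ref{Thm: bcholoatomicdecomp} for the atomic decomposition of $\varphi_b$, use Theorem \ref{bctbehavior} together with Theorem \ref{lonedistbv} to identify $T_\bc(w)_b$ with the $L^\gamma$ (or H\"older) trace, and conclude by linearity of the distributional boundary value. Your explicit verification that $p \leq 1 < \frac{q}{2-q}$ for $1 < q \leq 2$ is a small point the paper leaves implicit, but the argument is the same.
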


\begin{proof}
For $0 < p \leq 1$ and $w \in L^q(D)$, $q>1$, if $f \in H^p_w(D,\bc)$, then, by Theorem \ref{generalbchardyrep},
\[
    w = \varphi + T_\bc(w),
\]
and $\varphi \in H^p(D,\bc)$. By Theorem \ref{Thm: bcholoatomicdecomp}, $\varphi_b$ exists, and there exist $\{c_n^+\}, \{c_n^-\} \in \ell^p(\C)$ and collections of $p$-atoms $\{a_n^+\}$ and $\{a_n^-\}$ such that
\[
    \varphi_b = p^+ \left( \sum_{n=1}^\infty c^+_n a^+_n\right)^* + p^- \sum_{k=1}^\infty c^-_k a^-_k.
\]

By Theorem \ref{bctbehavior}, since $q >1$, it follows that $T_\bc(w)|_{\p D} \in L^\gamma(\p D, \bc)$, $1 < \gamma< \frac{q}{2-q}$, and if $q>2$, then $T_\bc(w)|_{\p D} \in C^{0,\alpha}(\p D,\bc)$, $\alpha = \frac{q-2}{q}$. Since $q>1$, it follows, by Theorem \ref{bctbehavior}, that $T_\bc(w) \in L^1(D)$ and 
\[
    \lim_{r \nearrow 1} \int_0^{2\pi} ||T_\bc(w)(e^{i\theta})- T_\bc(w)(re^{i\theta})||_{\bc}\,d\theta = 0
\]
so by Theorem \ref{lonedistbv}, $T_\bc(w)_b$ exists and $T_\bc(w)_b = T_\bc(w)|_{\p D}$. 

Since $f = \varphi + T_\bc(w)$ and $\varphi_b$ and $T_\bc(w)_b$ exist, it follows that $f_b$ exists and 
\[
    f_b = \varphi_b + T_\bc(w)_b.
\]
Since 
\[
    \varphi_b = p^+ \left( \sum_{n=1}^\infty c^+_n a^+_n\right)^* + p^- \sum_{k=1}^\infty c^-_k a^-_k,
\]
it follows that 
\[
        f_b = p^+ \left( \sum_{n=1}^\infty c^+_n a^+_n\right)^* + p^- \sum_{k=1}^\infty c^-_k a^-_k + T_\bc(w)_b.
\]
\end{proof}

Also, we show the Hilbert transform is a continuous operator.

\begin{deff}
    For $0 < p \leq 1$ and $w \in L^q(D)$, $q>1$, we define $(H^p_w(D,\bc))_b$ to be the collection of distributional boundary values of functions in $H^p_w(D,\bc)$. 
\end{deff}

\begin{deff}
    We define the quasi-norm $||\cdot||_{\bc,b}$ on $f_b \in (H^p_w(D,\bc))_b$ by 
    \[
        ||f_b||_{\bc,b} := ||\varphi_b||_{\bc,at} + ||T_\bc(w)_b||_{L^\gamma(\p D)},
    \]
    where $||\cdot||_{\bc,at}$ is the $\bc$-atomic norm on the distributional boundary values of functions in the $\bc$-holomorphic Hardy spaces $H^p(D,\bc)$ defined in Definition \ref{bcatomicnorm}.
\end{deff}

\begin{theorem}\label{Thm: bcnonhomoghhilbertrans}
    For $0 < p \leq 1$ and $w \in L^q(D,\bc)$, $q>1$, the Hilbert transform is a continuous operator on $(H^p_w(D,\bc))_b$ with the quasi-norm $||\cdot||_{\bc,b}$. 
\end{theorem}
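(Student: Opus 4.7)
The strategy is to mimic the proof structure of Theorem \ref{hilbertcontnonhomoghardy} in the bicomplex setting by splitting $f_b$ into its atomic and error pieces and applying the two already-established continuity results separately: the $\bc$-atomic continuity for the $\bc$-holomorphic boundary distributions (Theorem \ref{bcholohilbertcont}) and the classical M. Riesz estimate (Theorem \ref{hilbertpgreaterthanone}) for the smoother error term.

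First, given $f \in H^p_w(D,\bc)$, Theorem \ref{generalbchardyrep} yields the representation $f = \varphi + T_\bc(w)$ with $\varphi \in \hpb$. Passing to boundary values via Theorem \ref{bcgeneralhardyatomicdecomp} gives $f_b = \varphi_b + T_\bc(w)_b$, where $\varphi_b$ admits the $\bc$-atomic decomposition from Theorem \ref{Thm: bcholoatomicdecomp} and $T_\bc(w)_b \in L^\gamma(\p D,\bc)$ for some $\gamma > 1$ (by Theorem \ref{bctbehavior}, since $q > 1$). By the linearity of the integral defining $H$, one has $H(f_b) = H(\varphi_b) + H(T_\bc(w)_b)$.

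Next, I would bound each piece separately. For the atomic piece, Theorem \ref{bcholohilbertcont} directly provides a constant $C_1 > 0$ with $||H(\varphi_b)||_{L^p(\p D,\bc)} \leq C_1 ||\varphi_b||_{\bc,at}$. For the error piece, Proposition \ref{propLqiff} lets me split $T_\bc(w)_b = p^+ (T_\bc(w)_b)^+ + p^- (T_\bc(w)_b)^-$ into two $L^\gamma(\p D)$ components; applying Theorem \ref{hilbertpgreaterthanone} componentwise (legitimate since $\gamma > 1$) and reassembling via Remark \ref{basicestimate} yields $||H(T_\bc(w)_b)||_{L^\gamma(\p D,\bc)} \leq C_2 ||T_\bc(w)_b||_{L^\gamma(\p D,\bc)}$.

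Summing the two estimates gives
\[
    ||H(\varphi_b)||_{L^p(\p D,\bc)} + ||H(T_\bc(w)_b)||_{L^\gamma(\p D,\bc)} \leq C \left( ||\varphi_b||_{\bc,at} + ||T_\bc(w)_b||_{L^\gamma(\p D,\bc)} \right) = C \, ||f_b||_{\bc,b},
\]
which is the desired continuity of $H$ on $(H^p_w(D,\bc))_b$ with the quasi-norm $||\cdot||_{\bc,b}$. I do not anticipate a significant obstacle: all the needed machinery has already been built. The only delicate bookkeeping is tracking two different target norms simultaneously — the atomic piece lands in $L^p(\p D,\bc)$ while the error piece lands in $L^\gamma(\p D,\bc)$ — so the target of $H$ must accommodate both summands, exactly paralleling the complex case treated in \cite{WB}.
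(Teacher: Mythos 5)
Your proposal is correct and follows essentially the same route as the paper's own proof: decompose $f_b = \varphi_b + T_\bc(w)_b$ via Theorem \ref{bcgeneralhardyatomicdecomp}, apply Theorem \ref{bcholohilbertcont} to the atomic piece and the M.~Riesz estimate (Theorem \ref{hilbertpgreaterthanone}) to the $L^\gamma$ error term, and sum the two bounds against $||\cdot||_{\bc,b}$. Your extra remark about splitting $T_\bc(w)_b$ into idempotent components before applying M.~Riesz componentwise is a detail the paper leaves implicit, but it changes nothing substantive.
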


\begin{proof}
Let $f \in H^p_w(D, \bc)$. By Theorem \ref{bcgeneralhardyatomicdecomp}, 
\[
    f_b = \varphi_b + T_\bc(w)_b.
\]
By Theorem \ref{bcholohilbertcont}, since $\varphi_b \in H^p(D, \bc)$ and $0 < p \leq 1$, it follows that there exists a positive constant $C_1$ such that 
\[
    ||H(\varphi_b)||_{L^p(\p D)} \leq C_1 ||\varphi_b||_{\bc,at}.
\]
By Theorem \ref{hilbertpgreaterthanone}, since $T_\bc(w)_b \in L^\gamma(\p D)$ and $\gamma > 1$, by Theorem \ref{bctbehavior}, it follows that there exist a positive constant $C_2$ such that 
\[
    ||H(T_\bc(w)_b)||_{L^p(\p D)} \leq C_2 ||T_\bc(w)_b||_{L^p(\p D)}.
\]
Therefore, 
\begin{align*}
    ||H(f_b)||_{L^p(\p D)} 
    &\leq ||H(\varphi_b)||_{L^p(\p D)} + ||H(T_\bc(w)_b)||_{L^p(\p D)}\\
    &\leq C_1 ||\varphi_b||_{\bc,at} + C_2 ||T_\bc(w)_b||_{L^\gamma(\p D)} \\
    &\leq C\left( ||\varphi_b||_{\bc,at} +  ||T_\bc(w)_b||_{L^\gamma(\p D)} \right)\\
    &= C||f_b||_{\bc,b},
\end{align*}
where $C$ is a constant that dominates $C_1$ and $C_2$. 

\end{proof}

\section{Higher-Order Generalizations of the Bicomplex Hardy Spaces}\label{higherordersection}

In this section, we extend the results of the previous sections to higher-order variants of the bicomplex Hardy classes considered. This mirrors the progression from first-order to higher-order in \cite{WB} for the complex-valued Hardy classes. 

\subsection{Definition}

\begin{deff}
    Let $0 < p < \infty$, $n$ a positive integer, and $w : D\to\bc$. We define $H^{n,p}_{w}(D, \bc)$ to be the set of functions $f: D \to\bc$ such that 
    \[
        \dbar^n f = w
    \]
    and 
    \[
        \sum_{k=0}^{n-1} ||\dbar^k f ||^p_{H^p_\bc} < \infty. 
    \]
\end{deff}

\subsection{Representation}

First, we prove a result that is both classical in the complex holomorphic setting and critical to the proof of the representation result that follows.

\begin{prop}\label{Prop: bchardymlessthan2p}
For $0 < p < \infty$, every $w \in H^p(D,\bc)$ is an element of $L^m(D,\bc)$, for all $0 < m < 2p$. 
\end{prop}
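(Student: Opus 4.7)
The plan is to reduce the bicomplex statement to the classical complex result via the idempotent decomposition, and then rely on a standard Hardy--Bergman embedding for the scalar pieces. By Theorem \ref{thm: bchardyrep}, any $w \in H^p(D,\bc)$ can be written as $w = p^+ w^+ + p^- w^-$ with $(w^+)^*, w^- \in H^p(D)$. By Proposition \ref{propLqiff}, membership of $w$ in $L^m(D,\bc)$ is equivalent to the simultaneous membership of $w^+$ and $w^-$ in $L^m(D)$, and since $|w^+| = |(w^+)^*|$, it suffices to show that each of $(w^+)^*$ and $w^-$ lies in $L^m(D)$ for every $0 < m < 2p$. Hence the whole problem collapses to the classical inclusion $H^p(D) \subset L^m(D)$ (with respect to area measure on $D$) for $0 < m < 2p$.

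For that classical inclusion, given $\varphi \in H^p(D)$ I would combine the standard pointwise estimate $|\varphi(z)| \leq C(1-|z|)^{-1/p}\,\|\varphi\|_{H^p}$ with the circle bound $\int_0^{2\pi} |\varphi(re^{i\theta})|^p \, d\theta \leq \|\varphi\|_{H^p}^p$. Writing $|\varphi|^m = |\varphi|^{m-p}\,|\varphi|^p$ when $p < m < 2p$ and pulling the growth factor out gives
\[
\int_0^{2\pi} |\varphi(re^{i\theta})|^m \, d\theta \leq C (1-r)^{-(m-p)/p}\,\|\varphi\|_{H^p}^p,
\]
so that integration in $r$ yields $\iint_D |\varphi(z)|^m\,dA \leq C\int_0^1 (1-r)^{-(m-p)/p} r\,dr$, which converges precisely because $(m-p)/p < 1$. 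The case $0 < m \leq p$ then follows from the case $m = p$ by Hölder's inequality (or from the monotonicity of the integrals over $D$ together with $H^p \subset A^p$ obtained directly by $\iint_D |\varphi|^p\, dA \leq \frac{1}{2}\|\varphi\|_{H^p}^p$). This gives $(w^+)^*, w^- \in L^m(D)$ for every $0 < m < 2p$.

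Finally, combining these two scalar conclusions with Proposition \ref{propLqiff} delivers $w \in L^m(D,\bc)$ for all $0 < m < 2p$, completing the proof. The only real step is the scalar embedding $H^p(D) \subset L^m(D)$ for $m < 2p$; the bicomplex aspect of the statement is essentially cosmetic once the idempotent decomposition is invoked, and I do not expect any obstruction beyond being careful with the exponent $(m-p)/p$ in the integral bound.
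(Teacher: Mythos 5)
Your proposal is correct and follows essentially the same route as the paper: decompose $w$ via the idempotent representation using Theorem \ref{thm: bchardyrep}, reduce to the scalar inclusion $H^p(D)\subset L^m(D)$ for $0<m<2p$, and reassemble with Proposition \ref{propLqiff}. The only difference is that the paper simply cites this scalar embedding (Lemma 1.8.3 of \cite{KlimBook}) whereas you prove it directly from the pointwise growth estimate $|\varphi(z)|\leq C(1-|z|)^{-1/p}\|\varphi\|_{H^p}$, which is a correct and standard argument.
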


\begin{proof}
Recall that, for $0 < p < \infty$, if $g \in H^p(D)$, then $g \in L^m(D)$, for every $0 < m < 2p$. See, for example, Lemma 1.8.3 of \cite{KlimBook}. 

Now, by Theorem \ref{thm: bchardyrep}, if $f \in H^p(D, \bc)$, then $(f^+)^*,f^- \in H^p(D)$. So, $(f^+)^*,f^- \in L^m(D)$, for all $0 < m < 2p$. Since 
\[
        |(f^+)^*(z)| = |f^+(z)|,
\]
for all $z \in D$, it follows that $f^+ \in L^m(D)$, for all $0 < m < 2p$. By Proposition \ref{propLqiff}, since $f^+, f^- \in L^m(D)$, for all $0 < m < 2p$, it follows that $f \in L^m(D,\bc)$, for all $0 < m < 2p$. 

\end{proof}

We are now ready to prove the analogue of the pointwise function representation from Theorem 4.3 of \cite{WB} for the complex case. Existence and representation of distributional boundary values will be handled in the next section, along with nontangential boundary values. 

\begin{theorem}\label{Thm: bchigherhardyrep}
    Let $1< p < \infty$, $n$ a positive integer, and $w \in L^q(D,\bc)$, $q>2$. Every $f \in H^{n,p}_{w}(D,\bc)$ can be represented as 
    \[
        f = \Phi_0 + \Psi,
    \]
    where
    \[
        \Psi = T_\bc( \Phi_1 + T_\bc( \Phi_2 + T_\bc( \cdots + \Phi_{n-1} + T_\bc(w))\cdots) )
    \]
    and $\Phi_k \in H^p(D,\bc)$, for every $k$, and $\Psi \in C^{0,\alpha}(\overline{D},\bc)$. The result holds for $p > \frac{1}{2}$ and $1 < q \leq 2$, with $\Psi \in L^\gamma(D,\bc)$, $1 < \gamma< \frac{2q}{2-q}$, so long as $p$ satisfies $\frac{1}{2} < p < \frac{q}{2-q}$. 
    
\end{theorem}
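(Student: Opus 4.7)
The plan is to proceed by induction on $n$, using the first-order representation (Theorem \ref{generalbchardyrep}) in tandem with Proposition \ref{Prop: bchardymlessthan2p} to control each successive application of $T_\bc$. The base case $n=1$ is exactly Theorem \ref{generalbchardyrep}, so the content of the argument is the inductive step.

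For the inductive step, assume the statement holds for $n-1$, fix $f \in H^{n,p}_w(D,\bc)$, and set $g := \dbar f$. Then $\dbar^{n-1} g = \dbar^n f = w$, and re-indexing gives $\sum_{k=0}^{n-2} \|\dbar^k g\|_{H^p_\bc}^p = \sum_{k=1}^{n-1} \|\dbar^k f\|_{H^p_\bc}^p < \infty$, so $g \in H^{n-1,p}_w(D,\bc)$. The inductive hypothesis yields
\[
    g = \tilde\Phi_0 + T_\bc\bigl(\tilde\Phi_1 + T_\bc(\tilde\Phi_2 + T_\bc(\cdots + \tilde\Phi_{n-2} + T_\bc(w))\cdots)\bigr),
\]
with each $\tilde\Phi_k \in \hpb$ and the $T_\bc$-tail in $C^{0,\alpha}(\overline D,\bc)$ when $q>2$. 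Because an $H^p_\bc$ bound on circles gives an $L^p(D,\bc)$ bound after integrating in $r$, and $D$ is bounded, $g \in L^1(D,\bc)$; Proposition \ref{genbcnonhomogrep} then produces $\Phi_0 \in \holb$ with $f = \Phi_0 + T_\bc(g)$. Relabelling $\Phi_k := \tilde\Phi_{k-1}$ for $k=1,\ldots,n-1$ and substituting the expression for $g$ gives precisely the claimed nested form.

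The main obstacle is verifying that $\Phi_0$ lies in $\hpb$ and that the outermost piece $\Psi = T_\bc(g)$ inherits $C^{0,\alpha}$ regularity; this is where the hypothesis $p > 1$ is doing real work. By Proposition \ref{Prop: bchardymlessthan2p}, $\tilde\Phi_0 \in L^m(D,\bc)$ for every $m < 2p$, and since $p > 1$ we may choose $m > 2$. The tail term of $g$ lies in $C^{0,\alpha}(\overline D,\bc)\subset L^\infty(D,\bc)\subset L^m(D,\bc)$, so $g \in L^m(D,\bc)$ with $m > 2$. Theorem \ref{bctbehavior} then places $T_\bc(g) \in C^{0,\beta}(\overline D,\bc)$ for some $\beta > 0$, giving the H\"older regularity of $\Psi$ and the crude bound $\|T_\bc(g)\|_{H^p_\bc}^p \leq 2\pi\,\|T_\bc(g)\|_{\infty}^p < \infty$. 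Together with $\|f\|_{H^p_\bc}<\infty$, this forces $\Phi_0 = f - T_\bc(g) \in \hpb$, completing the induction.

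The second regime ($1 < q \leq 2$ with $\tfrac{1}{2} < p < \tfrac{q}{2-q}$) runs through the same induction, but with $\tilde\Psi \in L^\gamma$ in place of $C^{0,\alpha}$ and with Theorem \ref{bctbehavior} applied on circles rather than uniformly. The hypothesis $p > \tfrac{1}{2}$ is used to pick an intermediate exponent $m$ in the range $\tfrac{2p}{1+p} < m < 2p$; the lower bound is precisely what is needed so that the circular $L^\gamma$-estimate of Theorem \ref{bctbehavior} produces an exponent $\gamma > p$, while the upper bound keeps $\tilde\Phi_0$ in $L^m$ via Proposition \ref{Prop: bchardymlessthan2p}. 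Tracking these exponent conditions at every depth of the nested $T_\bc$'s is the only book-keeping step that is not purely formal.
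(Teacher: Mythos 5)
Your proposal is correct and takes essentially the same route as the paper: the paper simply unrolls your induction explicitly, peeling off one application of Theorem \ref{generalbchardyrep} at each order and using Proposition \ref{Prop: bchardymlessthan2p} to place the new nonhomogeneity in a good $L^m(D,\bc)$ class before the next application of $T_\bc$. The one cosmetic caveat is that in the second regime your window $\frac{2p}{1+p}<m<2p$ must also be intersected with $m>1$ so that the circle estimates of Theorem \ref{bctbehavior} apply (automatic for $p>1$, and available precisely because $p>\frac{1}{2}$, which is how the paper uses that hypothesis).
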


\begin{proof}
First, suppose $p > 1$ and $q>2$. By applying Theorem \ref{generalbchardyrep}, we have
\[
    \dbar^{n-1}f = \Phi_{n-1} + T_\bc(w),
\]
where $\Phi_{n-1} \in H^p(D,\bc)$ and $T_\bc(w) \in C^{0,\alpha}(\overline{D},\bc)$. Since $p>1$, it follows by Proposition \ref{Prop: bchardymlessthan2p} that $\Phi_{n-1} \in L^{q_{n-1}}(D,\bc)$, for some $2 < q_{n-1} < 2p$. Hence, $\Phi_{n-1} + T_\bc(w) \in L^{q_{n-1}}(D,\bc)$. So, we appeal to Theorem \ref{generalbchardyrep} again to have
\[
    \dbar^{n-2}f = \Phi_{n-2} + T_\bc(\Phi_{n-1} + T_\bc(w)).
\]
Repeating this $n-2$ more times, we have the representation
\[
        f = \Phi_0 + \Psi,
    \]
    where
    \[
        \Psi = T_\bc( \Phi_1 + T_\bc( \Phi_2 + T_\bc( \cdots + \Phi_{n-1} + T_\bc(w))\cdots) ).
    \]

Now, suppose that $1< q \leq 2$. So long as $p < \frac{q}{2-q}$, we have 
\[
    \dbar^{n-1}f = \Phi_{n-1} + T_\bc(w),
\]
by Theorem \ref{generalbchardyrep}, where $\Phi_{n-1} \in H^p(D, \bc)$ and $T_\bc(w) \in L^\gamma(D,\bc)$, $1 < \gamma < \frac{2q}{2-q}$. If $p > \frac{1}{2}$, then, by Proposition \ref{Prop: bchardymlessthan2p}, $\Phi_{n-1} \in L^{q_{n-1}}(D,\bc)$, where $1 < q_{n-1} < 2p$. So, $\Phi_{n-1} + T_\bc(w) \in L^{q_{n-1}}(D,\bc)$. So, we appeal to Theorem \ref{generalbchardyrep} again to have
\[
    \dbar^{n-2}f = \Phi_{n-2} + T_\bc(\Phi_{n-1} + T_\bc(w)).
\]
Repeating this $n-2$ more times, we have the representation
\[
        f = \Phi_0 + \Psi,
    \]
    where
    \[
        \Psi = T_\bc( \Phi_1 + T_\bc( \Phi_2 + T_\bc( \cdots + \Phi_{n-1} + T_\bc(w))\cdots) ).
    \]

Therefore, the result holds for $p>1$ and $q>2$, and the result holds for $1 < q \leq 2$, so long as $p$ satisfies $\frac{1}{2} < p < \frac{q}{2-q}$. 

\end{proof}

	\begin{corr}
		  Let $1< p < \infty$, $n$ a positive integer, and $w \in L^q(D,\bc)$, $q>2$. Every $f \in H^{n,p}_{w}(D,\bc)$ can be represented as 
		  \begin{align*}
		  	f(z) &= \Phi_0(z) + p^+\left[\sum_{k=1}^{n-1}   \left(\iint_D K_{k,0}(z-\zeta) \Phi^+_{k}(\zeta)\,d\eta\,d\xi \right) + \iint_D K_{n,0}(z-\zeta) w^+(\zeta)\,d\eta\,d\xi \right] \\
			&\quad \quad+  p^-\left[ \sum_{k=1}^{n-1} \left(\iint_{D}K_{0,k}(z-\zeta) \Phi_k^-(\zeta)\,d\eta\,d\xi \right)+ \iint_{D}K_{0,n}(z-\zeta) w^-(\zeta)\,d\eta\,d\xi\right],
		  \end{align*}
		   where, for integers $m$ and $\gamma$,
		   \[
		   	K_{m,\gamma}(z) := \begin{cases}
						\frac{(-m)! (-1)^m}{(\gamma-1)! \pi} z^{m-1} (z^*)^{\gamma-1}, & m = 0, \\
						\frac{(-\gamma)! (-1)^\gamma}{(m-1)! \pi} z^{m-1} (z^*)^{\gamma-1},  &  \gamma = 0,\\
						 \frac{1}{(m-1)! (\gamma-1)! \pi} z^{m-1} (z^*)^{\gamma-1} \left( \log|z|^2 - \sum_{k=1}^{m-1} \frac{1}{k} - \sum_{\ell = 1}^{\gamma-1} \frac{1}{\ell}\right), & m, \gamma \geq 1, \\
			\end{cases}
		   \]
		  (the first or second sums in the last case are taken to be zero if $m =1$ or $\gamma =1$, respectively),  and $\Phi_k \in H^p(D,\bc)$, for every $k$. The result holds for $p > \frac{1}{2}$ and $1 < q \leq 2$, so long as $p$ satisfies $\frac{1}{2} < p < \frac{q}{2-q}$. 
	\end{corr}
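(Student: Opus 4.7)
The plan is to reduce the corollary to the complex-valued iterated Cauchy-Pompieu representation applied separately to the $\pm$-components of $f$. The idempotent decomposition of $\bc$ decouples the single bicomplex equation $\dbar^n f = w$ into two uncoupled complex equations, which can then each be handled by the complex theory already assembled in the excerpt.

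First, decompose $f = p^+ f^+ + p^- f^-$ via Proposition~\ref{everybchasplusandminus}. Using $\dbar = p^+\,\tfrac{\p}{\p z} + p^-\,\tfrac{\p}{\p z^*}$ (Remark~\ref{minusfunctionisholo}) together with the orthogonality $p^+ p^- = 0$, the equation $\dbar^n f = w$ splits into
\[
\Bigl(\tfrac{\p}{\p z}\Bigr)^n f^+ = w^+, \qquad \Bigl(\tfrac{\p}{\p z^*}\Bigr)^n f^- = w^-,
\]
and the norm bound $\sum_{k=0}^{n-1}\|\dbar^k f\|_{H^p_{\bc}}^p < \infty$ transfers, via the coordinatewise inequalities of Remark~\ref{basicestimate} and Proposition~\ref{propLqiff}, to analogous finite $H^p(D)$-bounds on the iterated $(\p/\p z)$-derivatives of $f^+$ and iterated $(\p/\p z^*)$-derivatives of $f^-$.

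Next, I would invoke the complex-valued iterated Cauchy-Pompieu representation in Hardy classes. For $f^-$ satisfying $(\p/\p z^*)^n f^- = w^-$ with $w^- \in L^q(D)$ in the admissible range of $p,q$, there exist $\phi_k^- \in H^p(D)$ with
\[
f^-(z) = \phi_0^-(z) + \sum_{k=1}^{n-1}\iint_D K_{0,k}(z-\zeta)\,\phi_k^-(\zeta)\,d\eta\,d\xi + \iint_D K_{0,n}(z-\zeta)\,w^-(\zeta)\,d\eta\,d\xi,
\]
and by complex conjugation the equation $(\p/\p z)^n f^+ = w^+$ produces the analogous formula for $f^+$ with kernels $K_{k,0}$ and anti-holomorphic coefficients $\phi_k^+$ (so $(\phi_k^+)^* \in H^p(D)$). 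Assembling via the idempotent basis and setting $\Phi_k := p^+ \phi_k^+ + p^- \phi_k^-$, Theorem~\ref{thm: bchardyrep} immediately gives $\Phi_k \in H^p(D,\bc)$ for every $k$, and combining the two componentwise representations through $f = p^+ f^+ + p^- f^-$ yields precisely the stated formula.

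The main obstacle is establishing the complex-valued iterated Cauchy-Pompieu representation with Hardy-class coefficients used in the second step. I would prove this by induction on $n$: the first-order case is Theorem~\ref{thm: 2.10WBextension}, and the inductive step uses Proposition~\ref{Prop: bchardymlessthan2p} to ensure that the intermediate function $(\p/\p z^*)^{n-k} f^-$ lies in a suitable $L^q(D)$ class so that Theorem~\ref{thm: 2.10WBextension} may be applied at each level, producing the holomorphic coefficients $\phi_k^-$. The identification of the integral kernels with $K_{0,k}$ and $K_{k,0}$ is classical, since these are the fundamental solutions of $(\p/\p z^*)^k$ and $(\p/\p z)^k$ on $D$ (see \cite{Vek, BegBook}); note that the mixed log-type kernels $K_{m,\gamma}$ with $m,\gamma \geq 1$ in the definition do not appear in this corollary, because the idempotent splitting of $\dbar$ keeps the two types of iteration uncoupled.
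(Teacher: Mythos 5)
Your proposal is correct and follows essentially the same route as the paper: the paper's proof simply cites Theorem~\ref{Thm: bchigherhardyrep} (the iterated $T_\bc$ representation, itself established by exactly the induction you describe, alternating the first-order representation theorem with the $L^m(D,\bc)$, $m<2p$, embedding of Proposition~\ref{Prop: bchardymlessthan2p}) and then unwinds the definition of $T_\bc$ against the Begehr--Hile kernels $K_{k,0}$, $K_{0,k}$. Your only deviation is performing the idempotent splitting before the iteration rather than after, which is the same computation carried out in a different order.
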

	
	\begin{proof}
		This representation is a direct computation using the definition of $T_\bc$ combined with the result of Theorem \ref{Thm: bchigherhardyrep} and the work of Begehr and Hile in \cite{hoio} (or Begehr in \cite{BegBook}).
	\end{proof}

\subsection{Boundary Behavior}

The next theorem shows the bicomplex higher-order Hardy classes share the boundary behavior we expect of a Hardy class. 

\begin{theorem}
    Let $1< p < \infty$, $n$ a positive integer, and $w \in L^q(D,\bc)$, $q>2$. Every $f = \Phi_0 + \Psi \in H^{n,p}_w(D,\bc)$ such that $\Phi_0^+ \in H^p_g(D)$, where $g \in L^\gamma(D)$, and $\gamma>2$ or $1 < \gamma \leq 2$ and $p$ satisfies $p < \frac{\gamma}{2-\gamma}$, has a nontangential limit $f_{nt} \in L^p(\p D, \bc)$, and 
    \[
        \lim_{r \nearrow 1} \int_0^{2\pi} ||f_{nt}(e^{i\theta}) - f(re^{i\theta}) ||^p_{\bc} \, d\theta  = 0. 
    \] 
     The result holds for $p > \frac{1}{2}$ when $1 < q \leq 2$, so long as $p$ satisfies $\frac{1}{2} < p < \frac{q}{2-q}$.
\end{theorem}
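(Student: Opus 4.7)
The plan is to decompose $f$ via the pointwise representation from Theorem \ref{Thm: bchigherhardyrep} as $f = \Phi_0 + \Psi$, where $\Phi_0 \in H^p(D,\bc)$ and $\Psi$ is the nested Theodorescu term. The two pieces will then be handled separately and recombined by the same triangle-inequality step used in the proof of Theorem \ref{Thm: generalbchardybv}: once each summand has a nontangential $L^p(\p D,\bc)$ boundary value and $L^p$-norm convergence, the elementary bound $\|a+b\|_\bc^p \leq C_p(\|a\|_\bc^p + \|b\|_\bc^p)$ assembles the statement.

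For $\Phi_0$, the hypothesis $\Phi_0^+ \in H^p_g(D)$ with $g$ in the stated class is exactly the hypothesis of Theorem \ref{bcholobvcon}, which directly produces a nontangential limit $\Phi_{0,nt} \in L^p(\p D,\bc)$ together with $L^p$-norm convergence $\Phi_0(re^{i\theta}) \to \Phi_{0,nt}(e^{i\theta})$.

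For $\Psi$, I would split into the two regimes of $q$. When $q > 2$, Theorem \ref{Thm: bchigherhardyrep} already guarantees $\Psi \in C^{0,\alpha}(\overline{D},\bc)$, so $\Psi$ has a continuous extension to $\overline{D}$ and uniform convergence supplies the $L^p$ boundary value and $L^p$-norm convergence for free. When $1 < q \leq 2$ with $\tfrac{1}{2} < p < \tfrac{q}{2-q}$, I would write $\Psi = T_\bc(h)$ where $h = \Phi_1 + T_\bc(\Phi_2 + \cdots)$, and follow the same iterative bookkeeping as in the proof of Theorem \ref{Thm: bchigherhardyrep} (combining the successive $L^\delta$ bounds of Theorem \ref{bctbehavior} with Proposition \ref{Prop: bchardymlessthan2p}, which places each $\Phi_k$ in $L^{m_k}(D,\bc)$ for some $m_k < 2p$) in order to locate an exponent $q' > 1$ with $\tfrac{q'}{2-q'} > p$ such that $h \in L^{q'}(D,\bc)$. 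The boundary-trace part of Theorem \ref{bctbehavior} then furnishes $\Psi|_{\p D} \in L^\beta(\p D,\bc)$ and $L^\beta$-norm convergence for any $\beta \in \bigl(p, \tfrac{q'}{2-q'}\bigr)$; the inclusion $L^\beta(\p D, \bc) \subset L^p(\p D, \bc)$ on the compact circle then yields the desired $L^p$ statements with $\Psi_{nt} = \Psi|_{\p D}$.

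The main obstacle is the integrability bookkeeping in the $1 < q \leq 2$ regime. Each Theodorescu step lifts an $L^{q'}$ input only to a boundary trace in $L^\beta$ with $\beta < \tfrac{q'}{2-q'}$, a strict and bounded gain, so one must verify that the $n$-fold composition of Theodorescu operators interleaved with additions of the $\Phi_k$ does not degrade the effective exponent below what is required to eventually trace into $L^p$. The constraint $p > \tfrac{1}{2}$ is precisely what keeps the Proposition \ref{Prop: bchardymlessthan2p} ranges $m_k < 2p$ compatible with repeated Theodorescu lifts, while $p < \tfrac{q}{2-q}$ controls the final outer pass; propagating both constraints level by level follows the same template as in Theorem \ref{Thm: bchigherhardyrep}, after which the triangle-inequality combination with $\Phi_0$ completes the argument.
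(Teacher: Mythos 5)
Your proposal is correct and follows essentially the same route as the paper: decompose $f = \Phi_0 + \Psi$ via Theorem \ref{Thm: bchigherhardyrep}, handle $\Phi_0$ with Theorem \ref{bcholobvcon}, handle $\Psi$ with the boundary-trace and convergence statements of Theorem \ref{bctbehavior} (the paper also invokes this in the $q>2$ case rather than arguing via uniform convergence, but that is immaterial), and recombine with the $p$-triangle inequality. Your extra care about tracking the integrability exponent of the nested argument of the outermost $T_\bc$ in the $1<q\leq 2$ regime is a welcome explicitness that the paper largely leaves implicit in its citation of Theorem \ref{Thm: bchigherhardyrep}.
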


\begin{proof}
If $p > 1$, $n$ a positive integer, $w \in L^q(D, \bc)$, $q>2$, and $w \in H^{n,p}_{w}(D, \bc)$, then, by Theorem \ref{Thm: bchigherhardyrep}, $f = \Phi_0 + \Psi$, where $\Phi_0 \in H^p(D,\bc)$ and $\Psi \in C^{0,\alpha}(\overline{D},\bc)$. If $\Phi_0^+ \in H^p_g(D,\bc)$ and $g \in L^\gamma(D)$ and $\gamma > 2$ or $1 < \gamma \leq 2$ and $p$ satisfies $p < \frac{\gamma}{2-\gamma}$, then, by Theorem \ref{bcholobvcon}, $\Phi_0$ has a nontangential boundary value $\Phi_{0,nt} \in L^p(\p D, \bc)$ and 
\[
        \lim_{r \nearrow 1} \int_0^{2\pi} ||\Phi_{0,nt}(e^{i\theta}) - \Phi_0(re^{i\theta}) ||^p_{\bc} \, d\theta  = 0. 
    \] 
So, the nontangential boundary value $f_{nt}$ of $f$ exists and equals $f_{nt} = \Phi_{0,nt} + \Psi|_{\p D}$. By Theorem \ref{bctbehavior},
\[
        \lim_{r \nearrow 1} \int_0^{2\pi} ||\Psi(e^{i\theta}) - \Psi(re^{i\theta}) ||^p_{\bc} \, d\theta  = 0. 
\]
Therefore, for $r \in (0,1)$, we have
\begin{align*}
    & \int_0^{2\pi} ||f_{nt}(e^{i\theta}) - f(re^{i\theta}) ||^p_{\bc} \, d\theta\\
    &= \int_0^{2\pi} ||\Phi_{0,nt}(e^{i\theta}) + \Psi(e^{i\theta}) - (\Phi_0(re^{i\theta}) + \Psi(re^{i\theta})) ||^p_{\bc} \, d\theta \\
    &\leq C_p \left( ||\Phi_{0,nt}(e^{i\theta}) - \Phi_0(re^{i\theta}) ||^p_{\bc} \, d\theta + \int_0^{2\pi} ||\Psi(e^{i\theta}) - \Psi(re^{i\theta}) ||^p_{\bc} \, d\theta \right),
\end{align*}
where $C_p$ is a constant that depends on only $p$. Thus, 
\begin{align*}
    &\lim_{r \nearrow 1} \int_0^{2\pi} ||f_{nt}(e^{i\theta}) - f(re^{i\theta}) ||^p_{\bc} \, d\theta \\
    &\leq \lim_{r \nearrow 1}C_p \left( ||\Phi_{0,nt}(e^{i\theta}) - \Phi_0(re^{i\theta}) ||^p_{\bc} \, d\theta + \int_0^{2\pi} ||\Psi(e^{i\theta}) - \Psi(re^{i\theta}) ||^p_{\bc} \, d\theta \right) = 0.
\end{align*}

If $1 < q \leq 2$ and $p$ satisfies $\frac{1}{2} < p < \frac{q}{2-q}$, then, by Theorem \ref{Thm: bchigherhardyrep}, $f = \Phi_0 + \Psi$, where $\Phi_0 \in H^p(D,\bc)$, $\Psi \in L^r(D, \bc)$, $1 < r < \frac{2q}{2-q}$, and $\Psi|_{\p D}(\p D, \bc) \in L^s(\p D, \bc)$, $1 < s < \frac{q}{2-q}$. So, if $\Phi_0^+ \in H^p_g(D,\bc)$ and $g \in L^\gamma(D)$ and $\gamma > 2$ or $1 < \gamma \leq 2$ and $p$ satisfies $p < \frac{\gamma}{2-\gamma}$, then, by Theorem \ref{bcholobvcon}, $\Phi_0$ has a nontangential boundary value $\Phi_{0,nt} \in L^p(\p D, \bc)$ and 
\[
        \lim_{r \nearrow 1} \int_0^{2\pi} ||\Phi_{0,nt}(e^{i\theta}) - \Phi_0(re^{i\theta}) ||^p_{\bc} \, d\theta  = 0. 
    \] 
So, the nontangential boundary value $f_{nt}$ of $f$ exists and equals $f_{nt} = \Phi_{0,nt} + \Psi|_{\p D}$. By Theorem \ref{bctbehavior},
\[
        \lim_{r \nearrow 1} \int_0^{2\pi} ||\Psi(e^{i\theta}) - \Psi(re^{i\theta}) ||^p_{\bc} \, d\theta  = 0, 
\]
as $p < \frac{q}{2-q}$ by assumption. Therefore, for $r \in (0,1)$, we have
\begin{align*}
    & \int_0^{2\pi} ||f_{nt}(e^{i\theta}) - f(re^{i\theta}) ||^p_{\bc} \, d\theta\\
    &= \int_0^{2\pi} ||\Phi_{0,nt}(e^{i\theta}) + \Psi(e^{i\theta}) - (\Phi_0(re^{i\theta}) + \Psi(re^{i\theta})) ||^p_{\bc} \, d\theta \\
    &\leq C_p \left( ||\Phi_{0,nt}(e^{i\theta}) - \Phi_0(re^{i\theta}) ||^p_{\bc} \, d\theta + \int_0^{2\pi} ||\Psi(e^{i\theta}) - \Psi(re^{i\theta}) ||^p_{\bc} \, d\theta \right),
\end{align*}
where $C_p$ is a constant that depends on only $p$. Thus, 
\begin{align*}
    &\lim_{r \nearrow 1} \int_0^{2\pi} ||f_{nt}(e^{i\theta}) - f(re^{i\theta}) ||^p_{\bc} \, d\theta \\
    &\leq \lim_{r \nearrow 1}C_p \left( ||\Phi_{0,nt}(e^{i\theta}) - \Phi_0(re^{i\theta}) ||^p_{\bc} \, d\theta + \int_0^{2\pi} ||\Psi(e^{i\theta}) - \Psi(re^{i\theta}) ||^p_{\bc} \, d\theta \right) = 0.
\end{align*}
\end{proof}

Also, we show the bicomplex higher-order Hardy classes have distributional boundary values that are representable by an atomic decomposition. 

\begin{theorem}\label{Thm: higheratomicbc}
Let $\frac{1}{2} < p \leq 1$, $n$ be a positive integer, and $w \in L^q(D,\bc)$, $q>1$.  Every $f \in H^{n,p}_{w}(D,\bc)$ has a boundary value in the sense of distributions $f_b$, and $f_b$ can be represented as 
    \[
        f_b = p^+ \left( \sum_{n=1}^\infty c^+_n a^+_n\right)^* + p^- \sum_{k=1}^\infty c^-_k a^-_k + \Psi_b,
    \]
    where $\{c_n^+\}, \{c_n^-\} \in \ell^p(\C)$, $\{a_n^+\}$ and $\{a_n^-\}$ are collections of $p$-atoms, and $\Psi_b \in L^\gamma(\p D,\bc)$, $1 < \gamma< \frac{q}{2-q}$. If $q>2$, then $\Psi_b \in C^{0,\alpha}(\p D,\bc)$, $\alpha = \frac{q-2}{q}$. 
\end{theorem}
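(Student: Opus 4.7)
The plan is to mirror the proof of Theorem \ref{bcgeneralhardyatomicdecomp} but with the nested representation provided by Theorem \ref{Thm: bchigherhardyrep}. First, I would invoke Theorem \ref{Thm: bchigherhardyrep}, observing that for $\tfrac12<p\le 1$ and $w\in L^q(D,\bc)$ with $q>1$, we can (if $q>2$) replace $q$ by any $q'\in(1,2]$ satisfying $p<\tfrac{q'}{2-q'}$, since $w\in L^{q'}(D,\bc)$ as well. The hypothesis $p>\tfrac12$ together with $q>1$ guarantees the admissibility of such a $q'$, so the representation
\[
    f=\Phi_0+\Psi,\qquad \Psi=T_\bc\bigl(\Phi_1+T_\bc(\Phi_2+T_\bc(\cdots+\Phi_{n-1}+T_\bc(w))\cdots)\bigr),
\]
holds with $\Phi_k\in H^p(D,\bc)$ for each $k=0,1,\dots,n-1$.

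Next, I would treat the holomorphic piece $\Phi_0$ by applying Theorem \ref{Thm: bcholoatomicdecomp} to obtain sequences $\{c_n^+\},\{c_n^-\}\in\ell^p(\C)$ and collections of $p$-atoms $\{a_n^\pm\}$ so that $\Phi_{0,b}$ exists with
\[
    \Phi_{0,b}=p^+\Bigl(\sum_{n=1}^\infty c_n^+a_n^+\Bigr)^{\!*}+p^-\sum_{k=1}^\infty c_k^-a_k^-
\]
in $\mathcal{D}'(\partial D)$. This handles the atomic part of the claimed decomposition.

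For the error term $\Psi$, I would proceed by a finite induction on the depth of nesting of the $T_\bc$ operators, using Proposition \ref{Prop: bchardymlessthan2p} to ensure $\Phi_k\in L^{m_k}(D,\bc)$ for some $m_k>1$ (this is where $p>\tfrac12$ is used), and then Theorem \ref{bctbehavior} at each level to conclude that the argument of each successive $T_\bc$ sits in some $L^{s}(D,\bc)$ with $s>1$. In particular, the outermost application of $T_\bc$ gives $\Psi\in L^1(D,\bc)$, $\Psi|_{\partial D}\in L^\gamma(\partial D,\bc)$ for $1<\gamma<\tfrac{q}{2-q}$ (after reducing $q$ to the admissible range if necessary), and $\Psi(re^{i\theta})\to\Psi|_{\partial D}$ in the $L^1(\partial D,\bc)$ norm. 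Theorem \ref{lonedistbv} then yields existence of $\Psi_b$ with $\Psi_b=\Psi|_{\partial D}$ as distributions, placing $\Psi_b$ in the claimed $L^\gamma$ space. In the case $q>2$, the innermost $T_\bc(w)$ already belongs to $C^{0,\alpha}(\overline{D},\bc)$ with $\alpha=\tfrac{q-2}{q}$ by Theorem \ref{bctbehavior}, so the H\"older regularity is preserved through the outer layers via the $C^{0,\alpha}$-conclusion of Theorem \ref{bctbehavior} applied at the appropriate iteration.

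Finally, since both $\Phi_{0,b}$ and $\Psi_b$ exist, linearity of the distributional boundary value gives $f_b=\Phi_{0,b}+\Psi_b$, which is the desired representation. The main obstacle I anticipate is the bookkeeping required in tracking the integrability indices through the nested $T_\bc$ operators: one must verify, working from the inside out, that each intermediate function lies in a Lebesgue class to which Theorem \ref{bctbehavior} applies in the form needed, and that the final $L^\gamma$ (or $C^{0,\alpha}$) exponent agrees with that claimed in the statement. This induction step, while routine once set up, is the only place where the interaction between the constraint $p>\tfrac12$ (forcing $m_k>1$ via Proposition \ref{Prop: bchardymlessthan2p}) and the constraint $q>1$ (forcing $T_\bc(w)\in L^{>1}$) is genuinely used.
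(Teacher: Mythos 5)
Your proposal follows essentially the same route as the paper's proof: decompose $f=\Phi_0+\Psi$ via Theorem \ref{Thm: bchigherhardyrep}, apply Theorem \ref{Thm: bcholoatomicdecomp} to $\Phi_0$, use Theorems \ref{bctbehavior} and \ref{lonedistbv} to identify $\Psi_b$ with $\Psi|_{\partial D}$, and combine by linearity. If anything, you are more explicit than the paper about the index bookkeeping through the nested $T_\bc$ operators and about reconciling the hypotheses $\frac12<p\le 1$, $q>1$ with the stated range of Theorem \ref{Thm: bchigherhardyrep}, which the paper's proof passes over silently.
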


\begin{proof}
By Theorems \ref{Thm: bchigherhardyrep} and \ref{bctbehavior}, every $f \in H^{n,p}_{w}(D,\bc)$ has a representation 
\[
    f = \Phi_0 + \Psi,
\]
where $\Phi_0 \in H^p(D,\bc)$ and $\Psi \in C^{0,\alpha}(\overline{D},\bc)$ or $L^\gamma(D,\bc)$, $1 < \gamma< \frac{2q}{2-q}$, depending on the value of $q$. Also, by Theorem \ref{bctbehavior}, $\Psi|_{\p D} \in L^\gamma(\p D)$, $1 < \gamma< \frac{2q}{q-2}$. By Theorem \ref{lonedistbv}, $\Psi_b$ exists and equals $\Psi|_{\p D}$. By Theorem \ref{Thm: bcholoatomicdecomp},  the distributional boundary value $\Phi_{0,b}$ of $\Phi_0$ exists, and there exist $\{c_n^+\}, \{c_n^-\} \in \ell^p(\C)$, $\{a_n^+\}$ and $\{a_n^-\}$ are collections of $p$-atoms such that 
\[
    \Phi_{0,b} = p^+ \left( \sum_{n=1}^\infty c^+_n a^+_n\right)^* + p^- \sum_{k=1}^\infty c^-_k a^-_k.
\]
Thus, the distributional boundary value $f_b$ exists, and 
\[
    f_b = p^+ \left( \sum_{n=1}^\infty c^+_n a^+_n\right)^* + p^- \sum_{k=1}^\infty c^-_k a^-_k + \Psi_b
\]
\end{proof}

With the atomic representation from the last theorem, we show the Hilbert transform is a continuous operator on the distributional boundary values of $H^{n,p}_w(D,\bc)$. 

\begin{theorem}
Let $\frac{1}{2}< p \leq 1$, $n$ a positive integer, and $w \in L^q(D,\bc)$, $q>1$. The Hilbert transform is a continuous operator on $(H^{n,p}_{w}(D,\bc))_b$ with the quasi-norm $||\cdot||_{\bc,b}$.  
\end{theorem}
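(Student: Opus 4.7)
The plan is to mirror the first-order argument of Theorem \ref{Thm: bcnonhomoghhilbertrans}, substituting the higher-order atomic representation from Theorem \ref{Thm: higheratomicbc} in place of the first-order one from Theorem \ref{bcgeneralhardyatomicdecomp}. Implicit in the statement is that the quasi-norm $||\cdot||_{\bc,b}$ on $(H^{n,p}_w(D,\bc))_b$ extends in the obvious way as
\[
||f_b||_{\bc,b} := ||\Phi_{0,b}||_{\bc,at} + ||\Psi_b||_{L^\gamma(\p D,\bc)},
\]
where $f_b = \Phi_{0,b} + \Psi_b$ is the decomposition from Theorem \ref{Thm: higheratomicbc} and $1 < \gamma < \frac{q}{2-q}$ is the integrability index for $\Psi_b$ supplied by Theorem \ref{bctbehavior}. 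The hypothesis $\frac{1}{2} < p$ is precisely what makes the higher-order representation of Theorem \ref{Thm: bchigherhardyrep} (and thus of Theorem \ref{Thm: higheratomicbc}) available.

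Given $f \in H^{n,p}_w(D,\bc)$, I would invoke Theorem \ref{Thm: higheratomicbc} to obtain $f_b = \Phi_{0,b} + \Psi_b$ with $\Phi_{0,b} \in (\hpb)_b$ and $\Psi_b \in L^\gamma(\p D,\bc)$. Linearity of the Hilbert transform then gives $H(f_b) = H(\Phi_{0,b}) + H(\Psi_b)$. For the first summand, Theorem \ref{bcholohilbertcont} furnishes a constant $C_1 > 0$ with $||H(\Phi_{0,b})||_{L^p(\p D,\bc)} \leq C_1 ||\Phi_{0,b}||_{\bc,at}$. For the second, I would decompose $\Psi_b = p^+ \Psi_b^+ + p^- \Psi_b^-$ via Proposition \ref{everybchasplusandminus} (with $\Psi_b^\pm \in L^\gamma(\p D)$ by Proposition \ref{propLqiff}), apply the classical M. Riesz theorem (Theorem \ref{hilbertpgreaterthanone}) componentwise, and reassemble using the two-sided equivalence in Remark \ref{basicestimate} to obtain $||H(\Psi_b)||_{L^\gamma(\p D,\bc)} \leq C_2 ||\Psi_b||_{L^\gamma(\p D,\bc)}$. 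Since $p \leq 1 < \gamma$ and $\p D$ has finite measure, the embedding $L^\gamma(\p D,\bc) \hookrightarrow L^p(\p D,\bc)$ upgrades this to an $L^p$ bound.

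Combining the two estimates via the quasi-triangle inequality for $||\cdot||_{L^p(\p D,\bc)}$ yields $||H(f_b)||_{L^p(\p D,\bc)} \leq C ||f_b||_{\bc,b}$ for a constant $C$ depending only on $p$, $q$, and $\gamma$. The argument is essentially a synthesis of already-established tools, and the only step requiring any care is confirming that the componentwise application of M. Riesz genuinely produces an estimate in the bicomplex $L^\gamma$ norm; this is immediate from Remark \ref{basicestimate}. I therefore do not anticipate any genuine obstacle beyond bookkeeping.
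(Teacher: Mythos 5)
Your proposal is correct and follows essentially the same route as the paper: decompose $f_b$ via the higher-order atomic representation, bound the atomic (holomorphic) part by the bicomplex atomic norm using the already-established continuity on $(\hpb)_b$, bound $\Psi_b$ via M.~Riesz together with the finite-measure embedding $L^\gamma \hookrightarrow L^p$, and combine. Your componentwise treatment of $H(\Psi_b)$ is merely a more explicit rendering of a step the paper states directly.
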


\begin{proof}
By Theorem \ref{Thm: higheratomicbc}, every $f \in H^{n,p}_{w}(D,\bc)$ has a boundary value in the sense of distributions $f_b$, and there exist $\{c_n^+\}, \{c_n^-\} \in \ell^p(\C)$, $\{a_n^+\}$ and $\{a_n^-\}$ are collections of $p$-atoms such that
\[
    f_b = p^+ \left( \sum_{n=1}^\infty c^+_n a^+_n\right)^* + p^- \sum_{k=1}^\infty c^-_k a^-_k + \Psi_b
\] 
where $\Psi_b \in L^\gamma(\p D, \bc)$, $1 < \gamma< \frac{q}{2-q}$. By Theorem \ref{Thm: bcnonhomoghhilbertrans}, 
\[
    ||H(p^+ \left( \sum_{n=1}^\infty c^+_n a^+_n\right)^* + p^- \sum_{k=1}^\infty c^-_k a^-_k)||_{L^p(\p D)} \leq C_{at}||p^+ \left( \sum_{n=1}^\infty c^+_n a^+_n\right)^* + p^- \sum_{k=1}^\infty c^-_k a^-_k||_{\bc, at},
\]
where $C_{at}$ is a constant. By Theorem \ref{hilbertpgreaterthanone},
\[
    ||H(\Psi_b)||_{L^p(\p D)} \leq C_\psi ||\Psi_b||_{L^\gamma(\p D)}.
\]
Therefore, 
\begin{align*}
    ||H(f_b)||_{L^p(\p D)} 
    &\leq ||H(p^+ \left( \sum_{n=1}^\infty c^+_n a^+_n\right)^* + p^- \sum_{k=1}^\infty c^-_k a^-_k)||_{L^p(\p D)} + ||H(\Psi_b)||_{L^p(\p D)} \\
    &\leq C_{at}||p^+ \left( \sum_{n=1}^\infty c^+_n a^+_n\right)^* + p^- \sum_{k=1}^\infty c^-_k a^-_k||_{\bc, at} + C_\psi ||\Psi_b||_{L^\gamma(\p D)} \\
    &\leq C\left(||p^+ \left( \sum_{n=1}^\infty c^+_n a^+_n\right)^* + p^- \sum_{k=1}^\infty c^-_k a^-_k||_{\bc, at} +  ||\Psi_b||_{L^\gamma(\p D)} \right) \\
    &= C||f_b||_{\bc,b},
\end{align*}
where $C$ is a constant that dominates $C_{at}$ and $C_{\Psi}$. 

\end{proof}

\section*{Acknowledgments}

The author thanks Gustavo Hoepfner and the Departamento de Matemática da Universidade Federal de São Carlos for their hospitality while the author was visiting in the summer of 2023. During this visit, the author realized the extension of Theorem 2.10 from \cite{WB} that is Theorem \ref{thm: 2.10WBextension} above. Also, the author expresses their gratitude to the anonymous referees for their valuable comments and suggestions that improved the quality of this article.

\end{document}